\renewcommand{\epsilon}{\varepsilon}
\numberwithin{equation}{section}
\newtheoremstyle{thmlemcorr}{10pt}{10pt}{\itshape}{}{\bfseries}{.}{10pt}{{\thmname{#1}\thmnumber{ #2}\thmnote{ (#3)}}}
\newtheoremstyle{thmlemcorr*}{10pt}{10pt}{\itshape}{}{\bfseries}{.}\newline{{\thmname{#1}\thmnumber{ #2}\thmnote{ (#3)}}}
\newtheoremstyle{defi}{10pt}{10pt}{\itshape}{}{\bfseries}{.}{10pt}{{\thmname{#1}\thmnumber{ #2}\thmnote{ (#3)}}}
\newtheoremstyle{remexample}{10pt}{10pt}{}{}{\bfseries}{.}{10pt}{{\thmname{#1}\thmnumber{ #2}\thmnote{ (#3)}}}
\newtheoremstyle{ass}{10pt}{10pt}{}{}{\bfseries}{.}{10pt}{{\thmname{#1}\thmnumber{ A#2}\thmnote{ (#3)}}}
\theoremstyle{thmlemcorr}
\newtheorem{theorem}{Theorem}
\numberwithin{theorem}{section}
\newtheorem{lemma}[theorem]{Lemma}
\newtheorem{corollary}[theorem]{Corollary}
\newtheorem{proposition}[theorem]{Proposition}
\theoremstyle{thmlemcorr*}
\newtheorem{theorem*}{Theorem}
\newtheorem{lemma*}[theorem]{Lemma}
\newtheorem{corollary*}[theorem]{Corollary}
\newtheorem{proposition*}[theorem]{Proposition}
\newtheorem{problem*}[theorem]{Problem}
\newtheorem{conjecture*}[theorem]{Conjecture}
\theoremstyle{defi}
\theoremstyle{remexample}
\newtheorem{remark}[theorem]{Remark}
\theoremstyle{ass}
\newcommand{\Acal}{\mathcal{A}}
\newcommand{\Mcal}{\mathcal{M}}
\newcommand{\Ncal}{\mathcal{N}}
\newcommand{\Ucal}{Q'}
\newcommand{\Ibb}{\mathbb{I}}
\DeclareMathOperator{\curl}{curl}
\DeclareMathOperator{\dist}{dist}
\DeclareMathOperator{\rank}{rank}
\newcommand{\norm}[1]{\|#1\|}
\newcommand{\normB}[1]{\Bigl\|#1\Bigr\|}
\newcommand{\abs}[1]{|#1|}
\newcommand{\absb}[1]{\bigl|#1\bigr|}
\newcommand{\absB}[1]{\Bigl|#1\Bigr|}
\newcommand{\dd}{\;\mathrm{d}}
\newcommand{\N}{\mathbb{N}}
\newcommand{\R}{\mathbb{R}}
\newcommand{\Z}{\mathbb{Z}}
\newcommand{\weakly}{\rightharpoonup}
\newcommand{\weaklystar}{\overset{*}\rightharpoonup}
\newcommand{\eps}{\epsilon}
\newcommand{\ucal}{Q}
\newcommand{\sbullet}{\begin{picture}(1,1)(-0.5,-2)\circle*{2}\end{picture}}
\newcommand{\frarg}{\,\sbullet\,}
\newcommand{\Yrig}{Y_{\rm rig}}
\newcommand{\Ysoft}{Y_{\rm soft}}
\def\Xint#1{\mathchoice 
{\XXint\displaystyle\textstyle{#1}}%
{\XXint\textstyle\scriptstyle{#1}}%
{\XXint\scriptstyle\scriptscriptstyle{#1}}%
{\XXint\scriptscriptstyle\scriptscriptstyle{#1}}%
\!\int} 
\def\XXint#1#2#3{{\setbox0=\hbox{$#1{#2#3}{\int}$} 
\vcenter{\hbox{$#2#3$}}\kern-.5\wd0}} 
\def\dashint{\,\Xint-}
\title[Homogenization in single-slip finite elasto-plasticity]{Homogenization of layered materials with rigid components in single-slip finite crystal plasticity}
\author{Fabian Christowiak}
\address{Fakult\"at f\"ur Mathematik, Universit\"at Regensburg, 93040 Regensburg, Germany}
\email{Fabian.Christowiak@mathematik.uni-regensburg.de}
\author{Carolin Kreisbeck}
\address{Fakult\"at f\"ur Mathematik, Universit\"at Regensburg, 93040 Regensburg, Germany}
\email{Carolin.Kreisbeck@mathematik.uni-regensburg.de}
\begin{document}

 
\maketitle

 \begin{abstract}  
 \vspace{-12pt}   
We determine the effective behavior of a class of composites in finite-strain crystal plasticity, based on a variational model for materials made of fine parallel layers of two types. While one component is completely rigid in the sense that it admits only local rotations, the other one is softer featuring a single active slip system with linear self-hardening. As a main result, we obtain explicit homogenization formulas by means of $\Gamma$-convergence. Due to the anisotropic nature of the problem, the findings depend critically on the orientation of the slip direction relative to the layers, leading to three qualitatively different regimes that involve macroscopic shearing  and blocking effects. The technical difficulties in the proofs are rooted in the intrinsic rigidity of the model, which translates into a non-standard variational problem constraint by non-convex partial differential inclusions. The proof of the lower bound requires a careful analysis of the admissible microstructures and a new asymptotic rigidity result, whereas the construction of recovery sequences relies on nested laminates.  

   \color{black}                       
\vspace{8pt}

 \noindent\textsc{MSC (2010):} 49J45 (primary); 74Q05, 74C15
 
 \noindent\textsc{Keywords:} homogenization, $\Gamma$-convergence, composite materials, finite crystal plasticity.

 \noindent\textsc{Date:} \today.
 \end{abstract}

\section{Introduction}\label{sec:introduction}

The search for new materials with desirable mechanical properties 
is one of the key tasks in materials science. 
As suitable combinations of different materials may exceed their individual constituents with regard to important characteristics, like strength, stiffness or ductility, composites play an important role in material design, e.g.~\cite{Mil02, Jon98, VaM13}.
In this pursuit, the following question is of fundamental interest: 
Given the arrangement and geometry of the building blocks on a mesoscopic level, as well as the deformation mechanisms inside the homogeneous components, can we predict the macroscopic material response of a sample under some applied external load? 

By now there are various homogenization methods available that help to give answers. A substantial body of literature has emerged in materials science, engineering, and mathematics, see for instance~\cite{JKO94, Mil02} and the references therein, or more specifically,~\cite{Raa04, ScH13, FrG14, KoL99} for heterogeneous plastic materials, and~\cite{mal07, BoB02, MSB02}  for fiber-reinforced materials, 
and~\cite{ChC12, BrG95} for high-contrast composites, 
to mention just a few references.
\color{black}
A rigorous analytical approach that has proven successful for variational models based on energy minimization principles rests on the concept of $\Gamma$-convergence introduced by de Giorgi and Franzoni~\cite{DeG75, DeF75}. 
By letting the length scale of the heterogeneities tend towards zero, one passes to a limit energy, which gives rise to the effective material model. 
 \color{black}
In this paper, we follow along these lines and study a variational model for reinforced bilayered materials in the context of geometrically nonlinear plasticity. The model is set in the plane and we assume that the material consists of periodically alternating strips of rigid 
components and softer ones that can be deformed plastically by single-slip.
As this problem is highly anisotropic, considering the layered structure and the distinguished orientation of the slip system, there are interesting interactions to be observed.

Let $\Omega\subset\R^2$ be a bounded Lipschitz domain, modeling the reference configuration of an elastoplastic body in two space dimensions, and let $u:\Omega\to \R^2$ be a deformation field. For describing the periodic material heterogeneities, we take the unit cell $Y=[0,1)^2$, and define for $\lambda \in (0,1)$ the subsets 
\begin{align}\label{def:YrigYsoft}
\Ysoft =[0,1)\times[0, \lambda) \subset Y \qquad \text{and}\qquad \Yrig=Y\setminus \Ysoft,
\end{align} 
which correspond to the softer and rigid component, respectively.  
Throughout this paper, we identify the sets $\Yrig$ and $\Ysoft$ with their $Y$-periodic extensions to $\R^2$. To provide a measure for the length scale of the oscillations between the material components, we introduce the parameter $\epsilon>0$, which describes the thickness of two neighboring layers. With these notations, the sets $\eps\Yrig\cap \Omega$ and $\eps\Ysoft\cap \Omega$ refer to the stiff and softer layers. For an illustration of the geometric set-up see Figure \ref{fig: bilayered structure}.

Following the classic work by Kr\"oner and Lee~\cite{Lee69, Kro60} on finite-strain crystal plasticity, we use the multiplicative decomposition of the deformation gradient $\nabla u = F_{\rm e} F_{\rm p}$ as a fundamental assumption. Here, the elastic part $ F_{\rm e}$ describes local rotation and stretching of the crystal lattice, and the inelastic part $ F_{\rm p}$ captures local plastic deformations resulting from the movement of dislocations. Recent progress on a rigorous derivation of the above splitting as the continuum limit of micromechanically defined elastic and plastic components has been made in~\cite{ReC14, RSC16}.

In this model, proper elastic deformations are excluded by requiring $F_{\rm e}$ to be (locally) a rotation, i.e.~$F_{\rm e}\in SO(2)$ pointwise. 
This lack of elasticity makes the overall material fairly rigid. For the plastic part, we impose $ F_{\rm p} = \Ibb$ on $\eps\Yrig\cap \Omega$, reflecting that there is no plastic deformation in the stiff layers. In the softer layers $\eps\Ysoft\cap \Omega$, plastic glide can occur along one active slip system $(s, m)$ with slip direction $s\in \R^2$ with $\abs{s}=1$ and slip plane normal $m=s^\perp$, so that integration of the plastic flow rule yields $F_{\rm p} = \Ibb + \gamma s\otimes m$, where $\gamma\in \R$ corresponds to the amount of slip, for more details see~\cite[Section~2]{CoT05}. 
Altogether, we observe that the deformation gradient $\nabla u$ is restricted pointwise to the set
\begin{align*}
\Mcal_s&= \{F\in \mathbb{R}^{2\times 2}: F= R(\mathbb{I}+\gamma s\otimes m), R\in SO(2), \gamma \in \mathbb{R}
\} \\ &= \{F\in \R^{2\times 2}: \det F=1, \abs{Fs}=1\},
\end{align*} 
and in the stiff components even to $SO(2)$.

As regards relevant energy expressions, the latter entails that the energy density in the rigid layers is given by $W_{\rm rig}(F)=0$ if $F\in SO(2)$ and $W_{\rm rig}(F)=\infty$ otherwise in $\R^{2\times 2}$. 
Moreover, adopting the homogeneous single-slip model with linear self-harding introduced in~\cite{Con06} (cf.~also~\cite{CDK11}) gives rise to the condensed energy density in the softer layers 
\begin{align}\label{Wsoft}
W_{\rm soft}(F)=\begin{cases} \gamma^2=\abs{Fm}^2-1  & \text{if $F=R(\Ibb+\gamma s\otimes m)\in \Mcal_s$},\\ \infty & \text{otherwise}, \end{cases}
\qquad F\in \R^{2\times 2}.
\end{align}
We combine the energy contributions in the two components to obtain the heterogeneous density 
\begin{align}\label{def_W}
 W(y,F) = \mathbbm{1}_{\Yrig}(y)W_{\rm rig}(F) +  \mathbbm{1}_{\Ysoft}(y) W_{\rm soft}(F), \qquad  y \in \R^2,\  F \in \R^{2\times2},
\end{align}
which is periodic with respect to the unit cell $Y$ and reflects the bilayered structure of the material. Here, $\mathbbm{1}_U$ is the symbol for the characteristic function of a set $U\subset \R^2$.

\color{black}
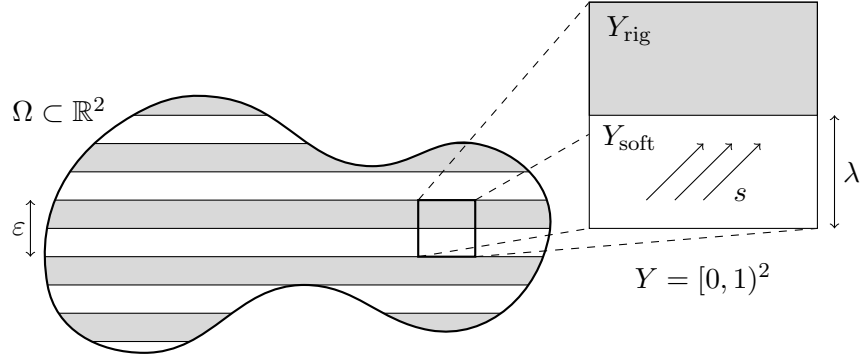
\begin{figure} \label{fig: bilayered structure}
\begin{tikzpicture}

\begin{scope}[scale = 1.5]
\clip(-0.7,-0) rectangle (7,3.3);
\def\material{(-0.5,1.5) to[out = 100, in=210] (1,4.5) to[out = 30, in=170]  (2.6,4.8) to[out = -10, in=180] (5.2,3.6) to[out = 0, in=170] (7, 4) to[out = -10, in=80] (8.3,2.4) to[out = 260, in=-10] (6.2, 0.7)[out = 0, in = 180] to[out = 170, in=0] (4, 1.5) to[out = 180, in=0] (1.2,0.3) to[out = 180, in=280] (-0.5,1.5)};

\def\materialflat{(0,2) to[out = 100, in=210] (1.1,4.2) to(3,4.2) to[out = -10, in=180] (5,3.7) to[out = 0, in=170] (7, 4) to[out = -10, in=80] (8,2) to[out = 260, in=10] (6, 0.5) to[out = 190, in=0] (4, 1) to[out = 180, in=40] (1.9,0) to (1.1,0) to[out = 140, in=280] (0,2)};

           \fill[white,fill opacity=.9] (0,0) rectangle (6,2);
           ,4.8
  	\foreach \y in {0,...,4}
        \draw[fill=gray!30] ($(-0.5,\y/2) + (0,0.25)$) rectangle ($(4.5,\y/2)+(0,0.5)$);
        
        \fill[even odd rule, fill = white,thick, scale = 0.5] \material (-3,0) rectangle (10,10);
        \draw[ thick, black ,fill opacity=.3, scale = 0.5] \material;
                
        \draw[thick, black] (3,1) rectangle (3.5,1.5);
        \draw[dashed,black] (3, 1.5) -- (4.5,3.25);
        \draw[dashed, black] (3.5,1) -- (6.5, 1.25);
        \draw[dashed, black] (3, 1) -- (4.5,1.25);
        \draw[dashed, black] (3.5,1.5) -- (6.5, 3.25);

        \draw[white, fill = white] (4.5,1.25) rectangle (6.5,3.25);
        \draw[rectangle, black, fill= gray!30] (4.5,2.25) rectangle (6.5,3.25);
        \draw[black] (4.5,1.25) rectangle (6.5,3.25);
        
        \draw (4.85,3) node {$\Yrig$};
	\draw (4.85,2.05) node {$\Ysoft$};
	\draw[<->] (6.65,1.25) -- (6.65,2.25);
	\draw (6.8,1.75) node {$\lambda$};

        \draw (-0.65,2.3) node[anchor = west] {$\Omega\subset \R^2$};
        \draw (5.5,0.8) node { $Y=[0,1)^2$};

	\draw (-0.65,1.25) node[anchor = west] {$\eps$};
	\draw[<->] (-0.4,1.0) -- (-0.4,1.5);
	\draw[->] (5, 1.5) -- (5.5, 2.0); 
	\draw[->] (5.25, 1.5) -- (5.75, 2.0);
	\draw[->] (5.5, 1.5) -- (6, 2.0);
	\draw (5.825, 1.55) node { $s$};	         
    \end{scope}    
    \end{tikzpicture} 
    \caption{Bilayered elastoplastic material with periodic structure; rigid components depicted in gray, softer components with one active slip system (slip direction $s$) in white.}
\end{figure}
According to~\cite{OrR99, CHM02}, the dynamical behavior of plastic materials under deformation can be well approximated by incremental minimization, that is by a time-discrete variational approach (for earlier work in the context of fracture and damage see~\cite{FrM93, FrM98}). Note that in this paper, we discuss only the first time step. This simplification suppresses delicate issues of microstructure evolution.  
As system energy of the first incremental problem we consider the energy functional $E_\eps: L^2_0(\Omega;\R^2)\to [0, \infty]$ for $\eps>0$ defined by 
\begin{align}\label{def:Eeps}
 E_\eps(u)=\int_\Omega W\left(\frac{x}{\eps}, \nabla u(x)\right)\dd{x}, \qquad u\in W^{1,2}(\Omega;\R^2)\cap L^2_0(\Omega;\R^2),
\end{align}
and $E_\eps(u)=\infty$ otherwise in $L^2_0(\Omega;\R^2)$, the space of $L^2$-functions with vanishing mean value. 
By~\eqref{def_W} and~\eqref{Wsoft}, one has the following equivalent representations of $E_\eps$,
\begin{equation}\label{Eeps_representation2}
 \begin{aligned}
E_\eps(u) &= \int_\Omega \gamma^2\dd{x} \quad \text{if~} u\in W^{1,2}(\Omega;\R^2), \nabla u=R(\Ibb+\gamma s\otimes m)  \\ &\hspace{2.7cm} \text{with~} R \in L^\infty(\Omega;SO(2)), \gamma \in L^2(\Omega), \gamma=0 \text{~a.e.~in~} \eps \Yrig\cap \Omega, 
\end{aligned}
\end{equation}
\begin{equation}\label{Eeps_representation1}
\begin{aligned}
&= \int_{\Omega} \abs{\nabla u m}^2-1\dd{x} \quad  \text{if $u\in W^{1,2}(\Omega;\R^2)$, $\nabla u\in \Mcal_s$ a.e.~in $\Omega$,  \hspace{1.6cm}} \\ &\hspace{4.2cm} \text{$\nabla u\in SO(2)$ a.e.~in $\eps\Yrig\cap \Omega$,}
\end{aligned}
\end{equation}
and $E_\eps(u)=\infty$ otherwise in $L^2_0(\Omega;\R^2)$. 
It  becomes apparent from~\eqref{Eeps_representation1} that the functionals $E_\eps$ are subject to non-convex constraints in the form of partial differential inclusions. Even though $E_\eps$ matches with an integral expression with quadratic integrand when finite, the constraints render the associated homogenization problem non-standard. 
In particular, it is not directly accessible to by now classical homogenization methods for variational integrals with quadratic growth as e.g.~in~\cite{Mul87, BrD98}. Due to the non-convexity of the sets $\Mcal_s$ and $SO(2)$, it does not fall within the scope of works on gradient-constraint problems like~\cite{CaDA02, CEYZ04, CDD06}, either. 

\color{black}

Our main result is the following theorem, which holds under the additional assumption that $\Omega$ is simply connected. It amounts to an explicit characterization of the $\Gamma$-limit of $(E_\eps)_\eps$ as $\eps$ tends to zero (for an introduction to $\Gamma$-convergence see e.g.~\cite{Dal93, Bra02}), and therefore, provides the desired homogenized model 
that describes the effective material response in the limit of vanishing layer thickness.

\begin{theorem}[Homogenization via $\Gamma$-convergence]\label{theo:Gammalimit}
The family $(E_\eps)_\eps$ $\Gamma$-converges to a functional $E:  L^2_0(\Omega;\R^2)\to [0, \infty]$ with respect to the strong $L^2(\Omega;\R^2)$-topology, in formulas, $\Gamma(L^2)\text{-}\lim_{\eps\to 0} E_\eps = E$, where $E$ is defined by
\begin{align}\label{def:GammalimitI}
E(u)=\begin{cases}\displaystyle\frac{s_1^2}{\lambda} \int_\Omega \gamma^2 \dd{x} - 2 s_1 s_2 \int_\Omega \gamma \dd x & \text{if $u\in W^{1,2}(\Omega;\R^2)$, 
$\nabla u=R(\Ibb +\gamma e_1\otimes e_2)$}  \text{~with} \\ &\quad \text{$R\in SO(2)$, $\gamma\in L^2(\Omega)$, 
$\gamma \in K_{s, \lambda}$ a.e.~in $\Omega$,}\\ \infty & \text{otherwise.} \end{cases}
\end{align} 
The pointwise restriction $K_{s,\lambda}$ for $s=(s_1, s_2)$ and $\lambda\in (0,1)$ is given by
\begin{align}\label{def_Ks}
K_{s, \lambda} = \begin{cases} \{0\} &\text{if~} s=e_2, \\ [-2\frac{s_1}{s_2}\lambda, 0] &\text{if~$ s_1 s_2 >0$,} \\ [0,-2\frac{s_1}{s_2}\lambda] &\text{if $s_1s_2 < 0$,} \\ \R & \text{if $s = e_1$}. \end{cases}
\end{align} 

Moreover, bounded energy sequences of $(E_\eps)_\eps$, i.e.~$(u_\eps)_\eps$ with $E_\eps(u_\eps)< C$ for all $\eps>0$, are relatively compact in $L^2_0(\Omega;\R^2)$.
\end{theorem}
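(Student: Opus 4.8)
The plan is to establish the two assertions of the theorem separately, following the standard structure for $\Gamma$-convergence results: (i) the compactness of bounded-energy sequences together with the liminf inequality (lower bound), and (ii) the existence of recovery sequences (upper bound). I will concentrate on the lower bound and compactness, since these carry the genuine difficulty coming from the rigidity constraint; the recovery sequence part I expect to handle via an explicit construction using nested laminates, as announced in the abstract.

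For compactness, start from a sequence $(u_\eps)_\eps$ with $E_\eps(u_\eps)\le C$. By representation~\eqref{Eeps_representation1}, on $\eps\Yrig\cap\Omega$ we have $\nabla u_\eps\in SO(2)$, while on all of $\Omega$ we have $\nabla u_\eps\in\Mcal_s$, so $\det\nabla u_\eps=1$ and $\abs{\nabla u_\eps s}=1$; the energy bound gives $\int_\Omega\abs{\nabla u_\eps m}^2\,\di x\le C+\abs{\Omega}$. Since $\abs{Fs}=1$ and $\det F=1$ force $\abs{Fm}\ge$ (a positive constant) and in fact $\abs{F}^2=1+\abs{Fm}^2$ for $F\in\Mcal_s$, the energy bound yields a uniform $L^2$-bound on $\nabla u_\eps$. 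Combined with the zero-mean normalization and Poincaré's inequality, $(u_\eps)_\eps$ is bounded in $W^{1,2}(\Omega;\R^2)$, hence relatively compact in $L^2_0(\Omega;\R^2)$ by Rellich–Kondrachov. This part is routine; the only care needed is the elementary inequality relating $\abs{F}$, $\abs{Fm}$ on $\Mcal_s$.

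For the liminf inequality, let $u_\eps\to u$ in $L^2$ with $\liminf_\eps E_\eps(u_\eps)<\infty$; along a subsequence realizing the liminf and with uniformly bounded energy, compactness gives $u\in W^{1,2}(\Omega;\R^2)$ and $\nabla u_\eps\weakly\nabla u$. The heart of the matter is to show that the limit $\nabla u$ is \emph{much more constrained} than a generic element of $\Mcal_s$: one must prove that $\nabla u = R(\Ibb+\gamma e_1\otimes e_2)$ with $R$ a \emph{constant} rotation and $\gamma\in K_{s,\lambda}$ a.e. This is exactly the ``new asymptotic rigidity result'' the abstract refers to. The mechanism: in each stiff layer $\eps(\Yrig\cap[0,1)\times[k,k+1))$, $\nabla u_\eps\in SO(2)$ on a thin strip of width $\sim(1-\lambda)\eps$ spanning $\Omega$ in the $e_1$-direction; a quantitative rigidity estimate (Friesecke–James–Müller type, applied layer by layer and then propagated across the soft layers using the slip structure $\Ibb+\gamma s\otimes m$ and the incompatibility/compatibility of the interfaces) should force the rotation fields to be asymptotically constant, and the slip to obey the sign and magnitude constraints encoded in $K_{s,\lambda}$. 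Once $\nabla u=R(\Ibb+\gamma e_1\otimes e_2)$ is identified, one rewrites, using $\abs{Fm}^2-1$ with $m=s^\perp$ and $F=R(\Ibb+\gamma e_1\otimes e_2)$, the soft-layer integrand in terms of $\gamma$, $s_1$, $s_2$: a direct computation gives $\abs{Fm}^2-1 = s_1^2\gamma^2 - 2s_1s_2\gamma$ on the soft part. Since the soft part occupies a fraction $\lambda$ of each period and by weak lower semicontinuity of $\gamma\mapsto\int\gamma^2$ (convexity) together with weak-$*$ convergence of the oscillating characteristic functions $\mathbbm{1}_{\eps\Ysoft}\weaklystar\lambda$, one obtains $\liminf_\eps E_\eps(u_\eps)\ge \frac{s_1^2}{\lambda}\int_\Omega\gamma^2\,\di x - 2s_1s_2\int_\Omega\gamma\,\di x = E(u)$; the factor $1/\lambda$ appears because the limiting $\gamma$ is the average over a period of a slip that is supported only on the fraction $\lambda$, so the pointwise energy is a constrained minimum over admissible profiles — this localization/relaxation step must be done carefully via a cell-type argument or a slicing-and-averaging estimate in the $e_2$-direction. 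I expect this asymptotic rigidity + sharp constant identification to be the main obstacle; everything else (compactness, the algebraic reduction of the integrand, the final convexity lower-semicontinuity) is comparatively standard.

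For the upper bound, given admissible $u$ with $\nabla u=R(\Ibb+\gamma e_1\otimes e_2)$, $R\in SO(2)$ constant, $\gamma\in K_{s,\lambda}$, the plan is to build $u_\eps\to u$ with $E_\eps(u_\eps)\to E(u)$ by a first-order laminate in the $e_2$-direction within each period: on the rigid sublayers set the gradient equal to an appropriate rotation (using that the sign condition on $\gamma$ guarantees the two required matrices, one in $SO(2)$ and one in $\Mcal_s$, are rank-one connected across horizontal interfaces), and on the soft sublayers use a correspondingly larger slip $\tfrac{1}{\lambda}\gamma$ so that the layer average reproduces $\nabla u$; a standard mollification/matching argument near $\partial\Omega$ and across the non-constant-$\gamma$ regions (handled by freezing $\gamma$ on a fine grid, i.e.\ nested laminates) upgrades this to a genuine recovery sequence, giving energy $\lambda\cdot\bigl(s_1^2(\tfrac\gamma\lambda)^2-2s_1s_2\tfrac\gamma\lambda\bigr)=\tfrac{s_1^2}{\lambda}\gamma^2-2s_1s_2\gamma$ per unit area, as required. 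The borderline cases $s=e_1$ (no constraint, $\gamma$ arbitrary) and $s=e_2$ ($K_{s,\lambda}=\{0\}$, so $E(u)=0$ and $u$ is a rigid motion) are consistent with the construction and need only be checked to be non-degenerate.
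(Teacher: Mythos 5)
Your compactness step is fine and matches the paper, but both of the hard parts are left as gaps, and in one place the proposed construction would actually fail.

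The upper bound for $s\neq e_1$ is the genuine problem. You propose a single-scale laminate: rotations on the rigid sublayers and slip $\gamma/\lambda$ on the soft sublayers, claiming the sign condition on $\gamma$ guarantees rank-one compatibility across horizontal interfaces. It does not: for $s\neq e_1$, a matrix $R(\Ibb+\beta e_1\otimes e_2)$ lies in $\Mcal_s$ only for $\beta\in\{0,-2s_1/s_2\}$, and a rank-one connection with normal $e_2$ between an element of $\Mcal_s$ and a rotation forces the slip to take the single value $2s_1/s_2$ (Lemma~\ref{lem:rankoneMSO2} and Remark~\ref{rem:rank1_connections}). So for $\gamma$ in the interior of $K_{s,\lambda}$ your soft-layer gradient $R(\Ibb+\tfrac{\gamma}{\lambda}e_1\otimes e_2)$ belongs to $\Ncal_s\setminus\Mcal_s$, where $W_{\rm soft}=\infty$; your energy count $\lambda\bigl(s_1^2(\gamma/\lambda)^2-2s_1s_2(\gamma/\lambda)\bigr)$ evaluates a formula that only holds on $\Mcal_s$. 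This is precisely the obstruction the paper emphasizes: its recovery sequence laminates between $SO(2)$ and the quasiconvex hull $\Ncal_s$ at the layer scale, and then replaces the $\Ncal_s$-gradient inside each soft layer by a much finer simple laminate with gradients in $\Mcal_s$ (Conti--Theil, Theorem~\ref{theo: Conti Theil}) corrected near the boundary by convex integration (M\"uller--\v{S}ver\'ak, Theorem~\ref{thm:convexintegration}), see Corollary~\ref{cor:convexintegration}. Your ``nested laminates'' refer only to freezing $\gamma$ on a grid, which does not address this second lamination scale; without it the construction is inadmissible.

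On the lower bound, two essential pieces are asserted rather than proved. First, the asymptotic rigidity: you invoke an FJM-type estimate that ``should force'' the rotations to be asymptotically constant, but the paper's Proposition~\ref{prop:rigidity} needs a concrete chain — exact (Reshetnyak) rigidity on each rigid strip, the explicit estimate of Lemma~\ref{lem:1d estimate} showing neighboring rotations are close, a BV bound and Helly's selection principle, and finally a curl argument to upgrade the $x_2$-dependent rotation field to a constant $R$; none of this is supplied, and the constraint $\gamma\in K_{s,\lambda}$ (needed so that bounded-energy sequences cannot converge to a $u$ with $E(u)=\infty$) requires a separate argument via the weak limit of $(\nabla u_\eps s)\mathbbm{1}_{\eps\Ysoft}$, lower semicontinuity of the $L^1$-norm and Lebesgue differentiation, which you do not give. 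Second, the sharp constant: convexity of $F\mapsto|Fm|^2$ plus $\mathbbm{1}_{\eps\Ysoft}\weaklystar\lambda$ only yields the bound without the factor $1/\lambda$; the improvement requires the additional information $\nabla u_\eps\mathbbm{1}_{\eps\Yrig}\weakly(1-\lambda)R$ from~\eqref{add-on}, a Jensen/H\"older step on the soft part (Corollary~\ref{cor: estimate lower bound}), and then a nontrivial approximation scheme (the comparison sequences $z_{k,j}=u_j-v_j+v_{k,j}$ in Step~3) to pass from piecewise affine to general $u$. You explicitly defer this (``must be done carefully''), so as it stands the liminf inequality is not established.
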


Recalling the definition of $\Gamma$-convergence, Theorem~\ref{theo:Gammalimit} can be formulated in terms of these three statements:

\textit{
 (Compactness)} For $\eps_j\to 0$ and $(u_j)_j \subset L^2_0(\Omega;\R^2)$ with $E_{\eps_j}(u_{j})<C$ for all $j\in \N$, there exists a subsequence of $(u_j)_j$ (not relabeled) and $u\in L_0^2(\Omega;\R^2)$ with $E(u)<\infty$ 
such that $u_j\to u$ in $L^2(\Omega;\R^2)$.

\textit{
 (Lower bound)} Let $\eps_j\to 0$ and $(u_j)_j\subset L^2_0(\Omega;\R^2)$ with $u_j\to u$ in $L^2(\Omega;\R^2)$ for some $u\in L^2_0(\Omega;\R^2)$. Then,
\begin{align}\label{liminf}
\liminf_{j\to \infty} E_{\eps_j}(u_{j}) \geq E(u).
\end{align}

\textit{
(Recovery sequences)}
For every $u\in L_0^2(\Omega;\R^2)$ with $E(u)<\infty$, there exists $(u_\eps)_\eps\subset L_0^2(\Omega;\R^2)$ with $u_\eps\to u$ in $L^2(\Omega;\R^2)$ such that $\lim_{\eps \to 0} E_{\eps}(u_{\eps}) = E(u).$

\begin{remark}
In comparison with $E_\eps$, the differential constraints in the formulation of $E$ are substantially more restrictive, and cause the limit functional to  be essentially one-dimensional. 
While the gradients of finite-energy deformations for $E_\eps$ lie pointwise in the set $\Mcal_{s}$, those for $E$ take values in $\Mcal_{e_1}$, independent of $s$, and satisfy the additional restriction of a constant rotation. In particular, this implies that $\partial_1 \gamma = 0$, as gradient fields are curl-free. 
Notice also that the second term in $E$ is non-negative due to the pointwise restriction $\gamma\in K_{s, \lambda}$.
\end{remark}

\begin{remark}[Generalizations of Theorem~\ref{theo:Gammalimit}]
a) Except for only minor changes, the quadratic growth in the energies $E_\eps$ can be replaced by $p$-growth with $p\geq 2$. Calling the modified functionals $E_\eps^p$, we have that $E^p=\Gamma(L^p)\text{-}\lim_{\eps\to 0}E_\eps^p$ is characterized by
\begin{align*} 
E^p(u) = \int_{\Omega} \frac{1}{\lambda^{p-1}}|\nabla u m - (1-\lambda)Rm|^p - \lambda \dd{x},
\end{align*}
if $u\in W^{1,p}(\Omega;\R^2)$ such that $\nabla u=R(\Ibb +\gamma e_1\otimes e_2)$ with $R\in SO(2)$, $\gamma\in L^p(\Omega)$ and 
$\gamma \in K_{s, \lambda}$ a.e.~in $\Omega$, and $E^p(u)=\infty$ otherwise in $L^p_0(\Omega;\R^2)$.
For $p = 2$ this is a reformulation of \eqref{def:GammalimitI}. 

b) In the case $s=e_1$, we characterize the $\Gamma$-limit of the family $(E^\tau_\eps)_\eps$ defined in~\eqref{Eepskappa_representation2}, which results from $(E_\eps)_\eps$ by adding a linear dissipative term with prefactor $\tau\geq 0$. For the details see Section~\ref{sec:Gamma_e1}. 
We remark that this extension is motivated by~\cite{CoT05} and~\cite{Con06}. Whereas an explicit relaxation of 
the model involving the sum of a quadratic and linear expression is (to the best of our knowledge) unsolved, the homogenization result in the special case $s=e_1$ gets by without microstructure formation and can therefore manage the mixed expression.
\end{remark}

\color{black}
In the special cases, where the slip direction is parallel or orthogonal to the layered structure, the result of Theorem~\ref{theo:Gammalimit} reflects  some basic physical intuition. While for $s=e_2$ the effective body can only be rotated as a whole, as the rigid layers lead to a complete blocking of the slip system,  
the slip system is unimpeded if $s=e_1$, so that, macroscopically, (up to global rotations) exactly all shear deformations in horizontal direction can be achieved. 
If the slip direction is inclined, i.e.~$s\notin\{e_1, e_2\}$, the pointwise restriction $\gamma\in K_{s,\lambda}$ implies both that the effective horizontal shearing 
is only uni-directional (with the relevant direction depending on the orientation of $s$), which indicates a loss of symmetry, and that its maximum amount is capped.
In the limit energy, the factor $s_1^2/\lambda$ in front of the quadratic expression in $\gamma$ corresponds to an effective hardening modulus, recalling that $\lambda\in (0,1)$ stands for the relative thickness of the softer material layers.
For $s\notin\{e_1, e_2\}$, one observes (maybe surprisingly) an additional energy contribution that is linear in $\gamma$, which can be interpreted as a dissipative term.

Regarding the proof of Theorem~\ref{theo:Gammalimit}, we perform the usual three steps for $\Gamma$-convergence results by showing compactness and establishing matching upper and lower bounds.

The key to compactness and the lower bound is to capture the macroscopic effects of the bilayered material structure, which lead to an anisotropic reinforcement of the elastoplastic body.
In Proposition~\ref{prop:rigidity}, we establish a new type of asymptotic rigidity result, which is not specific to the context of plasticity, but potentially  applies to any kind of composite with rigid layers, provided that the macroscopic material response is a priori known to be volume-preserving.  
The reasoning relies on a well-known result by Reshetnyak (cf.~Lemma~\ref{lem:rigidity}), which implies that the stiff layers can only rotate as a whole, on an explicit estimate showing that rotations on neighboring rigid layers are close, and on a suitable one-dimensional compactness argument.
As a consequence of Proposition~\ref{prop:rigidity}, the weak limits of finite energy sequences for $(E_\eps)_\eps$ coincide necessarily with globally rotated shear deformations in $e_1$-direction. Gradients of the latter have the form $R(\Ibb + \gamma e_1\otimes e_2)$ with a constant rotation $R\in SO(2)$ and scalar valued function $\gamma$. Note that this result holds for any orientation of the slip system $s$.

For the upper bound, we construct recovery sequences, meaning sequences of admissible deformations for $(E_\eps)_\eps$ that are energetically optimal in the limit $\eps\to 0$.
If $s=e_1$, the construction is quite intuitive, one simply compensates for the rigid layers by gliding more in the softer components, namely by a factor $1/\lambda$. Analogue constructions for $s\neq e_1$ are in general not compatible, which makes this case more involved. After suitable approximation and localization, we may focus on affine limit deformations $u$ with gradient $\nabla u=F\in \Mcal_{e_1}\cap \Mcal_s^{\rm qc}$, where $\Mcal^{\rm qc}_s$ denotes the quasiconvex hull of $\Mcal_s$, cf.~\eqref{Me1capNs}.
The observation that admissible sequences which are affine on all layers do not exist due to a lack of appropriate rank-one connections between $\Mcal_s$ and $SO(2)$ (see~Lemma~\ref{lem:rankoneMSO2} and \cite{ChK15PAMM}) motivates to drop the assumption of admissibility at first. Indeed, functions with piecewise constant gradients oscillating between the larger set $\Mcal_s^{\rm qc}$ and $SO(2)$ yield asymptotically optimal energy values. Finally, to make this construction admissible, we glue fine simple laminates with gradients in $\Mcal_s$ into the softer layers, ensuring the preservation of the affine boundary values. This approximating laminate construction, as well as the adaption argument for the boundary, is based on work by Conti and Theil~\cite{CoT05, Con06}, which uses, in particular, convex integration in the sense of M\"uller and \v{S}ver\'ak \cite{MuS99}.

\color{black}
The manuscript is organized as follows. In Section~\ref{sec:rigidity}, we state and prove the asymptotic rigidity result along with a useful corollary. These are the essential ingredients for proving our main result.
We collect some preliminaries on admissible macroscopic deformations in Section~\ref{sec:rankone_lines}, including both necessary conditions and relevant construction tools for laminates that are needed for finding recovery sequences. 
After these preparations, we proceed with the proof of Theorem~\ref{theo:Gammalimit}, which is subdivided into two sections. Section~\ref{sec:Gamma_e1} covers the simpler case $s=e_1$ in a slightly generalized setting, and Section~\ref{sec:proof_sneqe1} gives the detailed proofs for $s\neq e_1$.
Finally, Section~\ref{sec:cell_formula} briefly discusses the relation between the limit functional $E$ and (multi)cell formulas. 
\color{black}

\textit{Notation.}
The standard unit vectors in $\R^2$ are denoted by $e_1, e_2$, and $a^\perp=(-a_2, a_1)$ for $a=(a_1, a_2)\in \R^2$. For the tensor product between vectors $a,b\in \R^2$ we write $a\otimes b=a b^T\in \R^{2\times 2}$. Further, let  $\abs{F}=(FF^T)^{1/2}$ be the Frobenius norm of $F\in \R^{2\times 2}$.
With $\lceil t \rceil$ and $\lfloor t \rfloor$, let us denote the smallest integer not less and largest integer not greater than $t \in \R$, respectively. 
For a set $U \subset \R^2$, the characteristic function $\mathbbm{1}_U$ is given by $\mathbbm{1}_U(x)=1$ for $x\in U$, and $\mathbbm{1}_U(x)=0$ if $x\notin U$. When referring to a domain $\Omega\subset \R^2$, we mean that $\Omega$ is an open, connected, and nonempty set.

Using the standard notation for Lebesgue and Sobolev spaces, we set $L_0^2(\Omega;\R^2) =\{u\in L^2(\Omega;\R^2): \int_{\Omega} u \dd{x} =0\}$, and let $W^{1, 2}_\#(Q;\R^2)$ with a cube $Q\subset \R^2$ stand for the space of $W^{1,2}(Q;\R^2)$-functions with periodic boundary conditions. (Weak) partial derivatives regarding the $i$th variable are denoted by $\partial_i$, and $\nabla u= (\partial_1 u | \partial_2 u) \in \R^{2\times 2}$ 
for a vector field $u:\R^2\to \R^2$. In the two-dimensional setting of this paper, 
the curl operator is defined as follows, $\curl F=\partial_2 Fe_1 - \partial_1 Fe_2$ for $F:\R^2 \to \R^{2\times 2}$.

Notice that we often use generic constants, so that the value of a constant may vary from one line to the other. Moreover, families indexed with $\eps>0$, may refer to any sequence  
$(\eps_j)_j$ with $\eps_j\to 0$ as $j\to 0$.

\section{Asymptotic rigidity of materials with stiff layers}\label{sec:rigidity}

In this section, we examine the qualitative effect of rigid layers on the macroscopic material response of the composite.
The following result provides quite restrictive structural information on volume-preserving effective deformations.
\begin{proposition}[Asymptotic rigidity for layered materials]\label{prop:rigidity}
Let $\Omega\subset \R^2$ be a bounded Lipschitz domain. Suppose that the sequence $(u_\eps)_\eps \subset W^{1,2}(\Omega;\R^2)$ satisfies $u_\eps\weakly u$ in $W^{1,2}(\Omega;\R^2)$ as $\eps\to 0$ for some $u\in W^{1,2}(\Omega;\R^2)$ with $\det \nabla u =1$ a.e.~in $\Omega$, and 
\begin{align*}
\nabla u_\eps \in SO(2)\quad \text{a.e.~in $\Omega\cap \eps Y_{\rm rig}$}
\end{align*}
for all $\eps>0$ with $Y_{\rm rig}$ as defined in~\eqref{def:YrigYsoft}.
Then there exists a matrix $R\in SO(2)$ and $\gamma\in L^2(\Omega)$ such that
\begin{align}\label{representation_nablau}
\nabla u=R(\Ibb+\gamma e_1\otimes e_2).
\end{align}
Furthermore, 
\begin{align}\label{add-on}
\nabla u_\epsilon \mathbbm{1}_{\epsilon\Yrig\cap \Omega} \rightharpoonup \abs{Y_{\rm rig}} R 
\qquad \text{ in $L^2(\Omega; \R^{2\times 2})$}.
\end{align}
\end{proposition}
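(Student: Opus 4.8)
The plan is to exploit the rigidity of each individual stiff layer, show that the rotations on nearby stiff layers are quantitatively close, pass to a limit in a one-dimensional variable, and then identify the limit gradient using the volume constraint $\det\nabla u=1$. First I would recall Reshetnyak's theorem (Lemma~\ref{lem:rigidity}): since $\nabla u_\eps\in SO(2)$ a.e.\ on each horizontal strip $\Omega\cap\eps(\R\times[\lambda,1)+\eps\Z e_2)$, and strips of $\eps\Yrig$ are connected in the horizontal direction, on each such strip $u_\eps$ is (up to the boundary effects where the strip meets $\partial\Omega$) an affine rigid motion $x\mapsto R_\eps^k x + b_\eps^k$ with $R_\eps^k\in SO(2)$; more precisely $\nabla u_\eps$ is constant $=R_\eps^k$ on the connected component of the $k$-th stiff strip. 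I would be a little careful here because $\Omega$ need not be convex, so a stiff strip may decompose into several horizontal segments, each carrying its own rotation; this is a minor bookkeeping issue one handles by working on a slightly smaller compactly contained Lipschitz subdomain, or by noting the set of ``bad'' $x_2$-levels has measure $\to 0$.

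Next, the heart of the argument: \emph{rotations on neighbouring stiff layers are close}. The softer layer sandwiched between two consecutive stiff strips has $x_2$-width $\eps\lambda$, and on it $\nabla u_\eps\in\Mcal_s$, so $|\nabla u_\eps|$ is controlled in $L^2$ by the energy bound (or at least $\nabla u_\eps$ is bounded in $L^2$ by the weak convergence assumption). Integrating $\partial_2 u_\eps$ across the soft layer and comparing the traces of $u_\eps$ from the two adjacent rigid strips gives $|R_\eps^{k+1}-R_\eps^k|\lesssim \frac{1}{\eps}\int_{\text{soft strip }k}|\partial_2 u_\eps - (\text{something in }SO(2))|\,dx_2$ along horizontal slices, hence after summing in $k$ and using $\sum_k(\eps\lambda)\cdot(\text{slice integral})^2\lesssim \|\nabla u_\eps\|_{L^2}^2$, the total variation in $k$ of $R_\eps^k$ is $o(1)$-controlled in an averaged sense. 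Combined with the fact that the limit $u$ satisfies $\det\nabla u=1$, and that the horizontal derivative $\partial_1 u_\eps=R_\eps^k e_1$ is piecewise constant in $x_2$ on the stiff part (hence $\partial_1 u$ inherits a one-dimensional structure: it depends on $x_2$ only), I would deduce that $\partial_1 u$ is a.e.\ equal to a single rotation column $Re_1$. Writing $\nabla u=(Re_1\mid \partial_2 u)$ with $\det\nabla u=1$ and $\partial_1(\partial_2 u)=\partial_2(\partial_1 u)=\partial_2(Re_1)=0$ (so $\partial_2 u$ is independent of $x_1$), the constraint $\det=1$ forces $\partial_2 u = Re_2 + \gamma Re_1$ for some scalar $\gamma=\gamma(x)\in L^2$, which is exactly \eqref{representation_nablau}.

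For the \textbf{add-on} \eqref{add-on}: the sequence $\nabla u_\eps\mathbbm{1}_{\eps\Yrig\cap\Omega}$ is bounded in $L^2(\Omega;\R^{2\times2})$ (it takes values in $SO(2)\cup\{0\}$, so it is even bounded in $L^\infty$), hence has weak $L^2$-limits along subsequences; I would identify every such limit. Fix a test function $\varphi\in C_c^\infty(\Omega)$. On one hand $\mathbbm{1}_{\eps\Yrig}\weaklystar|\Yrig|=1-\lambda$ in $L^\infty$ weak-$*$ by periodicity. On the other hand, one writes $\nabla u_\eps\mathbbm{1}_{\eps\Yrig} = \nabla u_\eps - \nabla u_\eps\mathbbm{1}_{\eps\Ysoft}$; since $\nabla u_\eps\weakly\nabla u$, it suffices to show $\nabla u_\eps\mathbbm{1}_{\eps\Ysoft\cap\Omega}\weakly |\Ysoft|\,\nabla u = \lambda\nabla u$ in $L^2$, which combined with $|\Yrig|R=(1-\lambda)R$ and $\nabla u = R+\gamma Re_1\otimes e_2$ must be reconciled — so in fact the cleaner route is: first show the \emph{stiff} part converges weakly to $(1-\lambda)R$ directly. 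To do this, note $\nabla u_\eps\mathbbm{1}_{\eps\Yrig}$ restricted to the $k$-th stiff strip equals the constant $R_\eps^k$ times $\mathbbm{1}_{\eps\Yrig}$; using the just-established closeness of the $R_\eps^k$ to a common value and $R_\eps^k\to R$ (which follows because $\partial_1 u_\eps = R_\eps^k e_1$ on stiff strips converges weakly to $\partial_1 u = Re_1$, pinning down the rotation since $SO(2)$ is one-dimensional), one gets $\nabla u_\eps\mathbbm{1}_{\eps\Yrig}\weakly R\cdot\mathbbm{1}\text{-weak-}*\text{-limit of }\mathbbm{1}_{\eps\Yrig} = (1-\lambda)R = |\Yrig|R$. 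Since the limit is the same along every subsequence, the whole sequence converges, giving \eqref{add-on}.

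\textbf{Main obstacle.} I expect the delicate point to be making the ``neighbouring rotations are close'' estimate rigorous and then upgrading it to the pointwise structure \eqref{representation_nablau}: one only has an $L^2$ bound (not an energy bound) on $\nabla u_\eps$ in Proposition~\ref{prop:rigidity}, the softer layers are thin (width $\eps\lambda$) so controlling traces across them requires care with the slicing and Fubini, and the domain $\Omega$ being merely Lipschitz (not convex) means stiff ``strips'' can be disconnected, so Reshetnyak must be applied component-by-component with a quantitative count that the exceptional measure vanishes. Getting from ``the averaged/summed variation of $R_\eps^k$ in $k$ is small'' to ``$\partial_1 u$ equals a single constant rotation column a.e.'' is where the one-dimensional compactness argument (treating $R_\eps^{\lfloor\cdot/\eps\rfloor}$ as functions of $x_2$ with small oscillation) does the real work.
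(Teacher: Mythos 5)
Your overall strategy coincides with the paper's: Reshetnyak's rigidity on each stiff strip, a slicing estimate across the thin soft layers showing neighbouring rotations are quantitatively close, one-dimensional compactness in the $x_2$-variable, identification of $\nabla u$ via $\det\nabla u=1$, and a weak--strong argument for \eqref{add-on}. However, there is a genuine gap at the step where you pass from the layerwise rotations to ``$\partial_1 u$ is a.e.\ equal to a \emph{single} rotation column $Re_1$''. The slicing estimate only gives $\sum_i|R_\eps^i-R_\eps^{i-1}|^2\leq C\eps$, hence (by Cauchy--Schwarz with $\#I_\eps\sim\eps^{-1}$) a \emph{bounded} total variation, not the ``small oscillation'' you invoke; Helly's selection principle then produces an a priori $x_2$-dependent rotation field $\Sigma(x_2)\in SO(2)$, so at this stage one only knows $\nabla u=R(x_2)(\Ibb+\gamma e_1\otimes e_2)$. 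Your subsequent chain $\partial_1(\partial_2 u)=\partial_2(\partial_1 u)=\partial_2(Re_1)=0$ presupposes that $R$ is constant, i.e.\ it is circular, and the ingredients you list for the deduction (values in $SO(2)$, dependence on $x_2$ only, $\det\nabla u=1$) do not by themselves force constancy without an extra computation. Indeed, nonconstant layerwise rotations with $O(1)$ transition gradients in the soft layers are perfectly compatible with the hypotheses on $(u_\eps)_\eps$; what rules them out in the limit is the interplay of the gradient (curl-free) structure with $\det\nabla u=1$. The paper devotes a separate step to this: it first shows $\partial_1\gamma=0$ (via $\partial_1(\nabla u e_2\cdot Re_1)=\tfrac12\partial_2|Re_1|^2=0$) and then tests $\curl\nabla u=0$ to conclude $\partial_2(Re_1)=0$, hence $R$ constant; equivalently one can expand $\det\nabla u$ for $\partial_1u=\Sigma(x_2)e_1$ and see that a nonzero rotation rate produces a term linear in $x_1$, contradicting $\det\nabla u=1$. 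Without some such argument your proof only yields the weaker statement with $R=R(x_2)$.

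A secondary, related weakness is in \eqref{add-on}: weak convergence of $\partial_1 u_\eps$ does not ``pin down'' the rotations, since weak limits of $SO(2)$-valued sequences land in the convex hull. What makes the identification work in the paper is the \emph{pointwise} (hence strong $L^2$) convergence of the piecewise constant rotation functions $\Sigma_\eps(x_2)$ obtained from Helly, combined with $\mathbbm{1}_{\eps\Yrig}\weaklystar 1-\lambda$ and the already-established constancy of $R$; once you repair the constancy step, this weak--strong argument closes \eqref{add-on} exactly as you intend.
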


\begin{remark}\label{rem:rigidity}
a) Considering the model introduced in Section~\ref{sec:introduction}, any weakly converging sequence $(u_\eps)_\eps$ of bounded energy for $(E_\eps)_\eps$ as defined in~\eqref{Eeps_representation2} fulfills the requirements of Proposition~\ref{prop:rigidity}. Indeed, if $E_\eps(u_\eps)<C$ for $\eps>0$, then $u_\eps \in W^{1,2}(\Omega;\R^2)$ and 
\begin{align*}
\nabla u_\eps = R_\eps(\Ibb +\gamma_\eps s\otimes m)
\end{align*}
with $R_\eps\in L^\infty(\Omega; SO(2))$ and $\gamma_\eps \in L^2(\Omega)$ such that $\gamma_\eps=0$ a.e.~in $\Omega\cap\eps \Yrig$, which particularly entails that $\det \nabla u_\eps =1$ a.e.~in $\Omega$.

As a consequence of the weak continuity of the Jacobian determinant~(precisely, $u_\eps\weakly u$ in $W^{1,2}(Q;\R^2)$ implies $\det\nabla u_\eps\weaklystar \det \nabla u$ in the sense of measures, see e.g.~\cite{FLM05} and the references therein), the weak limit  function $u$ satisfies the 
volume constraint $\det \nabla u =1$ a.e.~in $\Omega$. 

In fact, \eqref{representation_nablau} provides a necessary condition for the class of admissible deformations in the effective limit model. It indicates that, macroscopically, (up to a global rotation) only horizontal shear can be achieved.

b) Notice that due to the gradient structure of $\nabla u$ in~\eqref{representation_nablau}, 
the function $\gamma$ is independent of $x_1$ in the sense that its distributional derivative $\partial_1 \gamma$ vanishes. This follows immediately from $0=\curl \nabla u=- \partial_1\gamma Re_1$. 
\end{remark}
The outline of the proof of Proposition~\ref{prop:rigidity} is as follows. 
First, we conclude from the well-known rigidity result in Lemma~\ref{lem:rigidity}, applied
to the connected components of $\Omega \cap \eps \Yrig$, that each stiff layer can only be rotated as a whole. 
The resulting rotation matrices are then used to construct a sequence of one-dimensional piecewise constant auxiliary functions for which we  establish compactness and from which we obtain structural information on $\nabla u$. 
More precisely, as a consequence of the explicit estimate in Lemma~\ref{lem:1d estimate}, the rotations of neighboring stiff layers are close for small $\eps$, and the auxiliary sequence has bounded variation.
By Helly's selection principle one can extract a pointwise converging subsequence whose limit function lies in $SO(2)$ a.e.~in $\Omega$, since lengths are preserved in this limit passage. Along with $\det \nabla u=1$, this observation translates into the representation $\nabla u = R(\Ibb +\gamma e_1\otimes e_1)$ with $R\in SO(2)$ a.e.~in $\Omega$. Finally, to prove that $R$ is constant, we exploit essentially the gradient structure of $\nabla u$.

Before giving the detailed arguments, let us briefly state one of the key tools, 
which, in its classical version, 
is also known as Liouville's theorem. The first proof in the context of Sobolev maps goes back to Reshetnyak~\cite{Res67}, for a quantitative generalization of the result we refer to~\cite[Theorem~3.1]{FJM02}.

\begin{lemma}[Rigidity for Sobolev functions]\label{lem:rigidity}
Let $\Omega\subset \R^2$ be a bounded Lipschitz domain and $u\in W^{1,2}(\Omega;\R^2)$ with $\nabla u(x) \in SO(2)$ for a.e.~$x\in \Omega$. Then $u$ is harmonic and there is a constant rotation $R\in 
SO(2)$ such that $\nabla u(x)=R$ for all $x\in \Omega$. In particular, $u(x) = Rx+b$ for some $b\in \R^2$. 
\end{lemma}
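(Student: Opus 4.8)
\textbf{Proof plan for Lemma~\ref{lem:rigidity} (rigidity for Sobolev functions).}

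The plan is to exploit the algebraic rigidity of $SO(2)$ together with the harmonicity that the constraint $\nabla u \in SO(2)$ forces, and then invoke the fact that a harmonic function with locally constant gradient on a connected domain is affine. First I would write $\nabla u = \begin{pmatrix} a & b \\ c & d \end{pmatrix}$ and translate the pointwise constraint $\nabla u(x)\in SO(2)$ into the scalar relations $a = d$, $b = -c$, $a^2 + b^2 = 1$ a.e.\ in $\Omega$. Writing $u = (u^1, u^2)$, the first two relations read $\partial_1 u^1 = \partial_2 u^2$ and $\partial_2 u^1 = -\partial_1 u^2$, which are exactly the Cauchy--Riemann equations for the complex-valued function $f = u^1 + \ii u^2$; equivalently $u^1$ and $u^2$ are conjugate harmonic. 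Since $u \in W^{1,2}$, these hold in the distributional sense, and Weyl's lemma upgrades $u^1, u^2$ (hence $u$) to smooth (indeed real-analytic) harmonic functions on $\Omega$.

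Next I would use the normalization $a^2 + b^2 = 1$. Having established smoothness, set $g = u^1 + \ii u^2$, a holomorphic function with $|g'|^2 = a^2 + b^2 = 1$ everywhere on $\Omega$, so $|g'| \equiv 1$. A holomorphic function of constant modulus on a connected open set is constant, so $g'$ is a constant of modulus one; writing $g' = e^{\ii\theta}$ for a fixed $\theta$, this means $\nabla u$ is the constant rotation matrix $R = \begin{pmatrix} \cos\theta & -\sin\theta \\ \sin\theta & \cos\theta \end{pmatrix} \in SO(2)$ throughout $\Omega$. (Alternatively, avoiding complex analysis: from harmonicity each entry of $\nabla u$ is harmonic, and differentiating the constraints $a = d$, $b = -c$, $a^2 + b^2 = 1$ together with the curl-free conditions $\partial_2 a = \partial_1 b$, $\partial_2 c = \partial_1 d$ shows $\nabla(\nabla u) \equiv 0$ on the connected set $\Omega$, so $\nabla u$ is constant.) Finally, integrating the constant gradient on the connected domain $\Omega$ gives $u(x) = Rx + b$ for some $b \in \R^2$, and harmonicity of affine maps is immediate.

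The only genuinely delicate point is the regularity bootstrap: a priori $u$ is merely $W^{1,2}$, so the Cauchy--Riemann equations hold only distributionally and the pointwise identity $|g'| = 1$ must be read a.e.\ before smoothness is available. The standard resolution is Weyl's lemma (a distributionally harmonic $L^1_{\loc}$ function is smooth harmonic), applied to $u^1$ and $u^2$ once the Cauchy--Riemann relations have been used to deduce $\Delta u^1 = \Delta u^2 = 0$ in $\mathcal{D}'(\Omega)$; after that everything is a pointwise computation on smooth functions. Connectedness of $\Omega$ (part of our standing convention on domains) is what promotes "locally constant gradient" to "globally constant gradient," and hence to a single rotation $R$ and a single translation vector $b$. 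One should also note that Lipschitz regularity of $\partial\Omega$ is not needed for the interior conclusion $\nabla u \equiv R$; it only enters if one wants $u$ to extend continuously up to the boundary, which is not asserted here.
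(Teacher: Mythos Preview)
Your argument is correct, but note that the paper does not actually prove this lemma: it is stated as a classical result and attributed to Reshetnyak, with a reference also to the quantitative version of Friesecke--James--M\"uller. So there is no ``paper's own proof'' to compare against.

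That said, your approach is a clean, self-contained proof tailored to the two-dimensional setting: the pointwise constraint $\nabla u\in SO(2)$ is exactly the Cauchy--Riemann system plus the unit-modulus condition on the complex derivative, Weyl's lemma handles the regularity bootstrap from $W^{1,2}$ to smooth, and then the open mapping theorem (or maximum modulus) forces $g'$ to be constant. This is the standard elementary route in the plane and is in fact simpler than Reshetnyak's original argument, which works in arbitrary dimension $n\geq 2$ and relies on the structure of conformal maps and quasiregular mappings. Your closing remarks on connectedness and on the irrelevance of boundary regularity for the interior statement are also accurate.
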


An explicit estimate of the distance between rotations of neighboring stiff layers is given in the following lemma.

\begin{lemma} \label{lem:1d estimate}
Let $P = (0,L)\times (0, H)$ with $L, H >0$. For $i=1,2$, let $w_i:P\to \R^2$ be the affine functions defined by $w_i(x)=R_ix+b_i$ with $R_i \in SO(2)$ and $b_i \in \R^2$. 

If  $u \in W^{1,2}(P;\R^2)$ is such that
\begin{align*}
u= w_1\ \text{on $\partial P \cap \{x_2 = 0\}$}\quad \text{and}\quad u = w_2 \text{ on $\partial P \cap \{x_2 = H\}$}
\end{align*}
in the sense of traces, then
\begin{align}
\int_P |\nabla ue_2|^2 \dd{x} \geq \frac{L^3}{24H} \abs{R_1 - R_2}^2. \label{lowerboundI}
\end{align}
\end{lemma}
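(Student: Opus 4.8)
The heart of the estimate is a one-dimensional Poincaré/Wirtinger-type argument applied in the $x_1$-direction. The plan is to introduce, for a.e. fixed $x_2 \in (0,H)$, the slice map $x_1 \mapsto u(x_1, x_2)$ and to compare it across the strip. Concretely, define $\phi(x) = u(x) - w_1(x)$, which vanishes on $\partial P \cap \{x_2 = 0\}$ and equals $(R_2 - R_1)x + (b_2 - b_1)$ on $\partial P \cap \{x_2 = H\}$. The quantity we want to bound below is $\int_P |\partial_2 u|^2 = \int_P |\partial_2 \phi + R_1 e_2|^2$; however, it is cleaner to work directly with a well-chosen scalar linear functional of $u$ that isolates the discrepancy $R_1 - R_2$.

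First I would pick the right test direction. Write $D = R_2 - R_1 \in \R^{2\times 2}$; we must produce $|D|^2$ on the right-hand side. For a unit vector $a \in \R^2$, consider $g_a(x) = a \cdot \partial_2 u(x)$ (a scalar in $L^2(P)$) and its antiderivative in $x_1$. For fixed $x_1$, integrating $\partial_2 u$ over $x_2 \in (0,H)$ and using the trace conditions gives $\int_0^H \partial_2 u(x_1, x_2)\,dx_2 = u(x_1, H) - u(x_1, 0) = w_2(x_1,H) - w_1(x_1,0) = (R_2 - R_1)(x_1 e_1) + R_2 H e_2 + (b_2 - b_1)$, using $w_i(x) = R_i x + b_i$ and evaluating the affine maps on the two horizontal edges. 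Hence, for each $x_1 \in (0,L)$,
\begin{align*}
\int_0^H \partial_2 u(x_1,x_2)\,dx_2 = D x_1 e_1 + c,
\end{align*}
where $c = R_2 H e_2 + b_2 - b_1$ is independent of $x_1$. Dotting with a fixed unit vector $a$ and subtracting the value at two points $x_1, x_1'$ eliminates $c$: $\int_0^H a\cdot(\partial_2 u(x_1,\cdot) - \partial_2 u(x_1',\cdot)) = a\cdot D e_1\,(x_1 - x_1')$.

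Now the plan is to turn this into the lower bound. Let $\alpha = |D e_1|$ and choose $a = De_1/\alpha$ (if $De_1 = 0$, choose instead $a$ achieving $|De_2|$; note $|D|^2 = |De_1|^2 + |De_2|^2$, so at least one of the two columns is comparable to $|D|$ — actually one should run the argument with whichever column is larger, or equivalently split $|D|^2 = |De_1|^2 + |De_2|^2$ and handle each; but in fact, since $R_1, R_2 \in SO(2)$, $D = R_1 - R_2$ has $|De_1| = |De_2| = |D|/\sqrt 2$, because $R_i$ are isometries with orthonormal columns, so either column works and $|De_1|^2 = |D|^2/2$). Define $h(x_1) = \int_0^H a\cdot \partial_2 u(x_1, x_2)\,dx_2$; then $h$ is affine with slope $a\cdot De_1 = |De_1| = |D|/\sqrt 2$, so $h(x_1) = h(0) + \frac{|D|}{\sqrt 2} x_1$. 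By Jensen/Cauchy–Schwarz, $h(x_1)^2 \le H \int_0^H |a\cdot\partial_2 u(x_1,x_2)|^2\,dx_2 \le H\int_0^H |\partial_2 u(x_1,x_2)|^2\,dx_2$. Integrating over $x_1 \in (0,L)$:
\begin{align*}
\int_P |\nabla u e_2|^2\,dx \;\ge\; \frac1H \int_0^L h(x_1)^2\,dx_1 \;=\; \frac1H \int_0^L \Bigl(h(0) + \tfrac{|D|}{\sqrt 2}x_1\Bigr)^2 dx_1.
\end{align*}
The final step is to minimize the right-hand side over the free constant $h(0) \in \R$: the minimum of $\int_0^L (h(0) + \beta x_1)^2\,dx_1$ over $h(0)$ with $\beta = |D|/\sqrt2$ is attained at $h(0) = -\beta L/2$ (subtracting the mean), giving $\beta^2 \int_0^L (x_1 - L/2)^2\,dx_1 = \beta^2 \cdot \frac{L^3}{12} = \frac{|D|^2}{2}\cdot\frac{L^3}{12} = \frac{L^3}{24}|D|^2$. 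Dividing by $H$ yields exactly \eqref{lowerboundI}.

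\textbf{Main obstacle.} Everything here is elementary once the right scalar functional is identified; the only genuine subtlety is the regularity bookkeeping: $u \in W^{1,2}(P;\R^2)$ only guarantees slicewise $W^{1,2}$ behavior for a.e. $x_1$ (by Fubini for Sobolev functions), and the trace identities $u(\cdot,0) = w_1$, $u(\cdot,H) = w_2$ must be interpreted so that the fundamental-theorem-of-calculus step $\int_0^H \partial_2 u(x_1,\cdot) = u(x_1,H) - u(x_1,0)$ holds for a.e. $x_1$ — this is standard (absolute continuity on lines) but should be invoked carefully, e.g. via an approximation of $u$ by smooth maps respecting the boundary data, or by the characterization of traces through slices. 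I would also double-check the claim $|De_1| = |De_2|$ using $R_1^TR_1 = R_2^TR_2 = \Ibb$; if one prefers to avoid it, simply note $\max(|De_1|,|De_2|)^2 \ge |D|^2/2$ and run the argument with the larger column, which still yields \eqref{lowerboundI} (and in the $SO(2)$ case the two are equal, so no loss). The constant $L^3/(24H)$ comes out exactly from the variance computation $\int_0^L(x_1 - L/2)^2 dx_1 = L^3/12$, so no slack is introduced.
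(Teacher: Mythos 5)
Your proposal is correct and takes essentially the same route as the paper: slice the strip along vertical lines, use the trace data together with Jensen/Cauchy--Schwarz (the paper phrases this as the linear interpolant minimizing the one-dimensional Dirichlet energy on each slice), then integrate in $x_1$ and optimize over the free constant, with $|(R_1-R_2)e_1|^2=\tfrac12|R_1-R_2|^2$ for rotations producing the constant $\tfrac{L^3}{24H}$. One small quibble: your parenthetical fallback of ``running the argument with the larger column'' is not actually available, since the vertical-slice identity only ever produces the column $(R_2-R_1)e_1$ --- but this does not matter, because your main argument correctly verifies $|(R_1-R_2)e_1|=|(R_1-R_2)e_2|=|R_1-R_2|/\sqrt{2}$ for $SO(2)$ matrices and never needs the fallback.
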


\begin{proof}
We observe that for given $a,b\in \R^2$ the $1$-d minimization problem 
\begin{align}\label{1dminimization}
\inf\bigg\{ \int_0^{H} |v'(t)|^2 \dd{t}: v \in W^{1,2}(0, H;\R^2), v(0) = a, v(H) = b\bigg\}
\end{align}
has a unique solution. Indeed, by Jensen's inequality, the minimizer $\bar{v}$ of~\eqref{1dminimization} is given by linear interpolation as $\bar{v}(t) = \tfrac{1}{H}(b-a)t+ a$ for $t \in (0, H)$.
Since $u\in W^{1,2}(P;\R^2)$, one has  that $u(x_1, \frarg)\in W^{1,2}(0,H;\R^2)\subset AC([0,H];\R^2)$  with $u(x_1, 0) = w_1(x_1, 0)$ and $u(x_1, H)=w_2(x_1, H)$ for a.e.~$x_1\in (0,L)$.
By setting
\begin{align*}
\bar{u}(x)= \frac{x_2}{H}(w_2(x_1, H)-w_1(x_1, 0))+ w_1(x_1, 0), \qquad x\in P,
\end{align*}
we therefore obtain 
\begin{align*}
\int_P \abs{\nabla u e_2}^2 \dd{x} & =\int_P \abs{\partial_2 u(x)}^2 \dd{x} \geq \int_0^L \int_0^H \abs{\partial_2 \bar{u}(x_1, x_2)} ^2\dd{x_2}\dd{x_1} \\ &=\frac{1}{H} \int_0^L \abs{w_2(x_1, H) -w_1(x_1, 0)}^2\dd{x_1} \\ & = 
\frac{1}{H} \int_0^L \abs{x_1 (R_2-R_1)e_1 + HR_2e_2  + (b_2-b_1)}^2\dd{x_1}.
\end{align*}
Minimizing this expression with respect to $b_1$ and $b_2$ gives~\eqref{lowerboundI}. 
\end{proof}

\begin{proof}[Proof of Proposition~\ref{prop:rigidity}]
To characterize the limit function $u$, 
we will show that the statement holds locally, i.e.~on any open cube $\ucal\subset\Omega$ with sides parallel to the coordinate axes. Precisely, there exists a rotation $R_{\ucal}\in SO(2)$ and $\gamma_{\ucal}\in L^2(\Omega)$ such that $\nabla u|_{\ucal}=R_{\ucal}(\Ibb +\gamma_{\ucal} e_1\otimes e_2)$. To deduce~\eqref{representation_nablau}, it suffices to exhaust $\Omega$ with overlapping cubes $Q$. This way one finds that all $R_Q$ coincide, leading to a global rotation $R\in SO(2)$.

Without loss of generality, let us assume in the following that $Q=(0, l)^2$ with $l>0$. 
To describe the layered structure of the material, we introduce the notation 
\begin{align*}
P_\eps^i= (\R\times \eps[i, i+1)) \cap \Ucal, \qquad  i \in \Z,\, \eps>0,
\end{align*} 
for the horizontal strips in a larger open cube $\Ucal\subset\Omega$ that compactly contains $Q$.
The index set $I_\eps = \{i \in \Z : \abs{P_\eps^i} = \eps \sqrt{\abs{\Ucal}}\}$ selects those strips of thickness $\eps$ that are fully contained in $\Ucal$. Then, by taking $\eps$ sufficiently small 
one has that $Q \subset \bigcup_{i \in I_\eps} P^i_\eps$.

We subdivide the remaining proof in six steps.

\textit{Step~1: Classical rigidity and approximation by piecewise affine functions.}
Applying Lemma~\ref{lem:rigidity} to each strip $P^i_\epsilon$ with $i\in I_\eps$ 
yields the existence of rotation matrices $R_\eps^i\in SO(2)$ and translation vectors $b_\eps^i\in \R^2$ such that
\begin{align*}
u_\eps(x)=R_\eps^i x+b_\eps^i, \qquad x\in P_\eps^i\cap \eps Y_{\rm rig}.
\end{align*}
Let the sequences $(\sigma_\eps)_\eps\subset L^{\infty}(\ucal;\R^2)$ 
and $(b_\eps)_\eps\subset L^\infty(\ucal;\R^2)$ be defined by
\begin{align*}
\sigma_\eps(x) =\sum_{i\in I_\eps}(R_\eps^ix)\mathbbm{1}_{P_\eps^i}(x)\qquad \text{and}\qquad b_\eps(x)=\sum_{i\in I_\eps} b_\eps^i\mathbbm{1}_{P_\eps^i}(x), \qquad x\in \ucal,
\end{align*}
and let $w_\eps=\sigma_\eps+b_\eps$. 
Next we show that
\begin{align}\label{convergence}
\lim_{\eps\to 0}\norm{u_\eps-w_\eps}_{L^2(\ucal;\R^2)}=0.
\end{align}
For each $i\in I_\eps$, we apply a 1-d version of the Poincar\'{e} inequality  
to derive that
\begin{align*}
\int_{P_\eps^i\cap \ucal} \abs{u_\eps-w_\eps}^2\dd{x} &=
\int_{0}^{l}\int_{\eps i}^{\eps (i+1)} \abs{u_\eps-w_\eps}^2\dd{x_2}\dd{x_1}
\leq \int_{0}^{l} c\eps^2 \int_{\eps i}^{\eps(i+1)} \abs{\partial_2 u_\eps - \partial_2 \sigma_\eps}^2 \dd{x_2}\dd{x_1} \\
& =c \eps^2 \int_{P_\eps^i\cap \ucal} \abs{(\nabla u_\eps-R_\eps^i)e_2}^2\dd{x} 
 \leq c\eps^2\bigl(\norm{\nabla u_\eps}^2_{L^2(P_\eps^i;\R^{2\times 2})}+ \abs{P_\eps^i}\bigr)
\end{align*}
with constants $c>0$ independent of $\eps$. Summing over all $i\in I_\eps$ gives
\begin{align*}
\norm{u_\eps - w_\eps}_{L^2(Q;\R^2)}^2 \leq c\eps^2 \bigl( \norm{u_\eps}_{W^{1,2}(\Omega;\R^{2})}+ \abs{\Omega}\bigr) \leq c\eps^2,
\end{align*}
and thus,~\eqref{convergence}.

We point out that $(\sigma_\eps)_\eps$ and $(b_\eps)_\eps$ are uniformly bounded in $L^\infty(Q;\R^2)$ and $L^2(Q;\R^2)$, respectively. The latter follows together with~\eqref{convergence} and the uniform boundedness of $(u_\eps)_\eps$ in $L^2(\Omega;\R^{2})$.
Consequently, there are subsequences of $(\sigma_\eps)_\eps$ and $(b_\eps)_\eps$ (not relabeled) and functions $\sigma, b\in L^2(Q;\R^2)$ such that
\begin{align}\label{sigmabetaconvergence}
\sigma_{\eps}\weaklystar \sigma\quad \text{in $L^\infty(Q;\R^2)$}\qquad \text{and}\qquad
b_{\eps}\weakly b\quad \text{in $L^2(Q;\R^2)$.}
\end{align}
Hence, $w_\eps\weakly \sigma + b$ in $L^2(Q;\R^2)$, so that, in view of ~\eqref{convergence} and the uniqueness of weak limits, 
\begin{align}\label{characterization_limitu}
u=\sigma + b.
\end{align}
Notice that $\partial_1 b=0$ due to the fact that the functions $b_{\eps}$ are independent of $x_1$ considering the definition of the strips $P_\eps^i$.

\textit{Step~2: Compactness by Helly's selection principle.}
It follows from Lemma~\ref{lem:1d estimate}, applied to the suitably shifted softer layers, that
\begin{align}\label{443}
\int_{\Ucal}\abs{\nabla u_\eps e_2}^2 \dd{x}\geq \sum_{i\in I_\eps, i>i_\eps} 
\int_{P_\eps^i\cap \eps \Ysoft} \abs{\nabla u_\eps e_2}^2 \dd{x} \geq \frac{|\Ucal|^{3/2}}{24 \eps \lambda}\sum_{i\in I_\eps, i>i_\eps} \abs{R_\eps^i-R^{i-1}_\eps}^2
\end{align}
with $i_\eps$ the smallest integer in $I_\eps$.
Since $(u_\eps)_\eps$ is uniformly bounded in $W^{1,2}(\Omega;\R^2)$, one infers that
\begin{align}\label{est_rigid}
\Bigl(\sum_{i\in I_\eps, i>i_\eps} \abs{R_\eps^i - R_\eps^{i-1}} \Bigr)^2\leq \# I_\eps \sum_{i\in I_\eps, i>i_\eps} \abs{R_\eps^i -R_\eps^{i-1}}^2 \leq C
\end{align}
for all $\eps$ with a constant $C>0$. Besides~\eqref{443}, we use for the last estimate that the cardinality of $I_\eps$ satisfies $\# I_\eps\leq \eps^{-1}\abs{\Ucal}^{1/2}$.

Consider now the piecewise constant function of one real variable $\Sigma_\eps\in L^\infty(0,l;SO(2))$
given by
\begin{align*}
\Sigma_\eps(t)=\sum_{i\in I_\eps} R_\eps^i \mathbbm{1}_{\eps[i, i+1)}(t), \qquad t\in (0,l),
\end{align*}
with the rotation matrices $R_\eps^i$ of Step~1. In view of~\eqref{est_rigid} the sequence $(\Sigma_\eps)_\eps$ has uniformly bounded variation. Then, by Helly's selection principle we can find a suitable (not relabeled) subsequence of 
$(\Sigma_{\eps})_\eps$ and a function $\Sigma:(0,l)\to \R^{2\times 2}$ of bounded variation such that
\begin{align}\label{pointwiseconvergence}
\Sigma_{\eps}(t)\to \Sigma(t)\quad \text{for all $t\in (0,l).$} 
\end{align}
Since $\det \Sigma_\eps(t)=1$ and $\abs{\Sigma_\eps(t)e_1}=1$ for all $t\in (0,l)$ and all $\eps>0$, it follows by continuity that 
$\Sigma(t)\in SO(2)$ for all $t\in (0, l)$.

\textit{Step~3: Improved regularity for $\Sigma$.}
Let $\Pi_\epsilon: (0,l) \rightarrow \R^{2\times2}$ be the linear interpolant of $\Sigma_\eps$ between the 
points $\eps(i+\frac{1}{2})\in (0,l)$ with $i\in I_\eps$. 
In the intervals close to the endpoints not covered by this definition, we take $\Pi_\eps$ to be constant.

As a continuous, piecewise affine function $\Pi_\epsilon$ is almost everywhere differentiable, and together with~\eqref{est_rigid} or~\eqref{443} it holds that
\begin{align*}
\int_0^l \abs{\Pi_\epsilon'}^2 \dd{t} \leq \sum_{i\in I_\epsilon, i>i_\eps} \eps \frac{\abs{R_\epsilon^i - R_\epsilon^{i-1}}^2}{\epsilon^2}  \leq C,
\end{align*}
and
\begin{align}\label{est17}
\int_ 0^l \abs{\Pi_\epsilon - \Sigma_\epsilon}^2 \dd{t} \leq \sum_{i \in I_\epsilon, i>i_\eps}\eps  \abs{R_\epsilon^i - R_\epsilon^{i-1}}^2 \leq C\eps^2.
\end{align}
In particular,  $(\Pi_\epsilon)_\eps$  is uniformly bounded in $W^{1,2}(0,l; \R^{2\times 2})$, and 
therefore $(\Pi_{\eps})_\eps$ 
admits a weakly converging subsequence with limit $\Pi\in W^{1,2}(0,l;\R^{2\times 2})$.
From~\eqref{pointwiseconvergence}, \eqref{est17}, and the uniqueness of the limit we infer that
 \begin{align}\label{regularity}
 \Sigma=\Pi \in W^{1,2}(0,l;SO(2)).
 \end{align}
By constant extension of $\Sigma$ in $x_1$-direction we define a map $R$ on $Q$, 
precisely we set $R(x)=\Sigma(x_2)$ for $x\in Q$. Then, $R\in W^{1,2}(Q;SO(2))$.

\textit{Step~4: Establishing $\nabla u\in \Mcal_{e_1}$ pointwise.}
The estimate
\begin{align*}
\abs{\sigma_\eps(x)-\Sigma(x_2) x} \leq\absB{\sum_{i\in I_{\eps}} \big(R_\eps^i-\Sigma(x_2)\big)\mathbbm{1}_{P_\eps^i}(x)}\abs{x}\leq \sqrt{2}l\absb{\Sigma_\eps(x_2)-\Sigma(x_2)}, \qquad x\in Q,
\end{align*}
along with~\eqref{pointwiseconvergence} 
and 
the first part of~\eqref{sigmabetaconvergence} leads to 
\begin{align}\label{eq99}
\sigma(x) = \Sigma(x_2) x = R(x)x\qquad \text{ for a.e.~$x\in Q$.}
\end{align} 
From~\eqref{characterization_limitu} and the independence of $b$ of $x_1$ we then conclude that
\begin{align}\label{eq1}
\nabla u e_1= R e_1.
\end{align}
This shows in particular that $R$, $\Sigma$, $\sigma$, and $b$ are independent of the choice of subsequences in~\eqref{pointwiseconvergence} and~\eqref{sigmabetaconvergence}. 
Moreover, by~\eqref{regularity} and~\eqref{eq99} it is immediate to see that $\sigma, b\in W^{1,2}(Q;\R^2)$.

Since $R \in SO(2)$ pointwise by Step~3, one has that $\abs{\nabla u e_1}=1$ a.e.~in $Q$. 
In conjunction with $\det \nabla u =1$ a.e.~in $Q$, we conclude that $\nabla u\in \Mcal_{e_1}$ a.e.~in $Q$. 
In view of~\eqref{eq1}, there exists a function $\gamma \in L^2(Q)$ such that
\begin{align}\label{first_representation}
\nabla u = R\big(\Ibb + \gamma e_1 \otimes e_2\big).
\end{align}

\textit{Step~5: Proving $R$ constant.}
 Using~\eqref{characterization_limitu} and~\eqref{eq99}, we compute that
 \begin{align*}
 \nabla ue_2(x)= \partial_2 R(x)x + R(x)e_2 + \partial_2 b(x)
 \end{align*}
for a.e.~$x\in Q$. Then, along with~\eqref{first_representation} and the independence of $R$ of $x_1$, it follows for the distributional derivative of $\gamma$ that
\begin{align}\label{partial1gamma}
\partial_1 \gamma = \partial_1(\nabla ue_2\cdot Re_1) = \partial_2 Re_1\cdot Re_1 = \partial_2 \abs{Re_1}^2=0.
\end{align}
As $\curl \nabla u=0$ in $Q$ in the sense of distributions, the representation~\eqref{first_representation} entails
\begin{align*}
0 = \int_Q Re_1 \partial_2 \varphi - Re_2\partial_1\varphi- \gamma Re_1 \partial_1\varphi \dd{x} =\int_Q Re_1 \partial_2\varphi \dd{x}
\end{align*}
for all $\varphi \in C_c^\infty(Q)$,
where we have used $\partial_1 R=0$ and~\eqref{partial1gamma}. This shows $\partial_2 Re_1=0$, which implies that $R$ is a constant rotation.

\textit{Step~6: Proof of~\eqref{add-on}.}
Accounting for~\eqref{pointwiseconvergence} and $\norm{\Sigma_\eps}_{L^\infty(Q;\R^{2\times 2})}=2$, one finds that $\Sigma_\epsilon\to\Sigma$ in $L^2(0,l;\R^{2\times 2})$. Together with 
$\mathbbm{1}_{\eps \Yrig}\weaklystar \abs{Y_{\rm rig}}=(1-\lambda)$ in $L^\infty(Q)$, a weak-strong convergence argument leads to
\begin{align*}
\nabla u_\epsilon \mathbbm{1}_{\epsilon \Yrig} 
  \rightharpoonup (1-\lambda) R\quad \text{in $L^2(Q;\R^{2\times 2})$},
\end{align*}
considering that $\nabla u_\eps(x) = \Sigma_\eps(x_2)$ for a.e.~$x\in \eps\Yrig\cap Q$.
To see that the statement holds in $L^2(\Omega;\R^{2\times 2})$ as well, we argue again by exhaustion of $\Omega$ with cubes $Q$.
\end{proof}

As discussed in Remark~\ref{rem:rigidity}, Proposition~\ref{prop:rigidity} imposes structural restrictions on the limits of bounded energy sequences for $(E_\eps)_\eps$. As a consequence, we obtain an asymptotic lower bound energy estimate, which constitutes a first step toward the proof of the liminf-inequality~\eqref{liminf} for the $\Gamma$-convergence result in Theorem~\ref{theo:Gammalimit}.

\begin{corollary}\label{cor: estimate lower bound}
Let $(u_\epsilon)_\eps \subset W^{1,2}(\Omega; \R^2)$ be such that $E_\eps(u_\eps)\leq C$ for all $\eps>0$ and $u_\eps \weakly u$ in $W^{1,2}(\Omega;\R^2)$ for some $u\in W^{1,2}(\Omega;\R^2)$ with gradient of the form~\eqref{representation_nablau}. 
If, in addition, $u$ is (finitely) piecewise affine, 
then 
\begin{align*}
 \liminf_{\epsilon\rightarrow0} \int_\Omega |\nabla u_\epsilon m|^2 - 1 \dd x\geq  \int_\Omega \frac{1}{\lambda} |\nabla um - (1-\lambda)Rm|^2 - \lambda\dd{x}.
\end{align*}
\end{corollary}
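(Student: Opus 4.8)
The strategy is to split the energy integral into its contributions over the rigid layers $\eps\Yrig\cap\Omega$ and the soft layers $\eps\Ysoft\cap\Omega$, and to pass to the limit in each using the information provided by Proposition~\ref{prop:rigidity}. On the rigid layers the integrand $|\nabla u_\eps m|^2-1$ vanishes identically, since $\nabla u_\eps\in SO(2)$ there forces $|\nabla u_\eps m|=1$. Hence
\begin{align*}
\int_\Omega |\nabla u_\eps m|^2-1\dd x = \int_{\eps\Ysoft\cap\Omega} |\nabla u_\eps m|^2 - 1 \dd x
= \int_\Omega \mathbbm{1}_{\eps\Ysoft}\bigl(|\nabla u_\eps m|^2 - 1\bigr)\dd x,
\end{align*}
and since $\mathbbm{1}_{\eps\Ysoft}\weaklystar \lambda$ in $L^\infty(\Omega)$, the constant part contributes $-\lambda|\Omega|$ in the limit, matching the $-\lambda$ term on the right-hand side. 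The main work is therefore the lower bound for $\liminf_\eps\int_\Omega\mathbbm{1}_{\eps\Ysoft}|\nabla u_\eps m|^2\dd x$.

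For this I would use the convexity of $F\mapsto |Fm|^2$ together with a blow-up / Jensen-type argument adapted to the thin soft layers, exploiting that $u$ is piecewise affine. Localizing on a region where $\nabla u\equiv F=R(\Ibb+\gamma e_1\otimes e_2)$ is constant, one writes, on each soft strip $P_\eps^i\cap\eps\Ysoft$, the average of $\nabla u_\eps m$ and applies Jensen's inequality to bound $\fint |\nabla u_\eps m|^2$ from below by $|\fint \nabla u_\eps m|^2$. The point is that the average of $\nabla u_\eps$ over a full period $P_\eps^i$ converges to $F$, while the average over the rigid part of that period converges, by the add-on statement~\eqref{add-on} of Proposition~\ref{prop:rigidity} (in localized form), to $(1-\lambda)R$; subtracting, the average of $\nabla u_\eps m$ over the soft part of each period is asymptotically $\bigl(Fm-(1-\lambda)Rm\bigr)/\lambda$. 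Combining Jensen on each strip with $\mathbbm{1}_{\eps\Ysoft}\weaklystar\lambda$ yields
\begin{align*}
\liminf_{\eps\to0}\int_\Omega \mathbbm{1}_{\eps\Ysoft}|\nabla u_\eps m|^2\dd x
\geq \int_\Omega \lambda\,\Bigl|\frac{\nabla u\,m-(1-\lambda)Rm}{\lambda}\Bigr|^2\dd x
= \int_\Omega \frac{1}{\lambda}|\nabla u\,m-(1-\lambda)Rm|^2\dd x,
\end{align*}
which combined with the rigid-layer computation gives the claim. The piecewise affine hypothesis lets one carry this out on finitely many pieces and sum.

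The delicate point — the step I expect to be the main obstacle — is making the "average of $\nabla u_\eps$ over the soft part of a single period tends to $(Fm-(1-\lambda)Rm)/\lambda$" rigorous in a way compatible with the liminf, since~\eqref{add-on} is only an $L^2$-weak statement about $\nabla u_\eps\mathbbm{1}_{\eps\Yrig\cap\Omega}$ and does not by itself control behavior strip-by-strip. The clean way around this is to avoid strip-by-strip averaging altogether and instead argue globally by weak lower semicontinuity: test the weak convergence $\nabla u_\eps\mathbbm{1}_{\eps\Ysoft\cap\Omega}\weakly \nabla u - (1-\lambda)R$ (which follows by subtracting~\eqref{add-on} from $\nabla u_\eps\weakly\nabla u$ and using $\nabla u_\eps\mathbbm{1}_{\eps\Yrig}+\nabla u_\eps\mathbbm{1}_{\eps\Ysoft}=\nabla u_\eps$) against itself through the convex function $\phi(G)=|Gm|^2$. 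Concretely, for $\phi$ convex and $g_\eps:=\mathbbm{1}_{\eps\Ysoft}\nabla u_\eps\weakly g:=\nabla u-(1-\lambda)R$, one has $\mathbbm{1}_{\eps\Ysoft}\phi(\nabla u_\eps)=\mathbbm{1}_{\eps\Ysoft}\phi(\tfrac1{\lambda}\,\mathbbm{1}_{\eps\Ysoft}\nabla u_\eps\cdot\lambda)\ge \mathbbm{1}_{\eps\Ysoft}\bigl[\phi(g_\eps/\lambda)\bigr]$ is not quite the right manipulation; rather one uses that $\mathbbm{1}_{\eps\Ysoft}|\nabla u_\eps m|^2\ge \tfrac1\lambda|g_\eps m|^2\cdot\mathbbm{1}_{\eps\Ysoft}\cdot\lambda$ pointwise is false too, so the correct route is Jensen applied to the convex combination $\nabla u_\eps = \mathbbm{1}_{\eps\Ysoft}\nabla u_\eps + \mathbbm{1}_{\eps\Yrig}\nabla u_\eps$ after passing to subsequences that converge in Young-measure sense; the Young measure of $\nabla u_\eps$ restricted to a period has barycenter $F$ and puts mass $1-\lambda$ on $SO(2)$ (concentrated at $R$ by~\eqref{add-on}) and mass $\lambda$ on the soft part, and Jensen on the soft-part conditional measure delivers precisely the bound. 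I would write up this Young-measure version, as it sidesteps the measurability issues of strip-by-strip estimates and is robust under the piecewise-affine reduction.
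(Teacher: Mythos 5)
Your final Young-measure route is correct in essence and genuinely different from the paper's proof, which is much more elementary. The paper reduces to a single affine piece, uses $|\nabla u_\eps m|=1$ on $\eps\Yrig\cap\Omega$ to write the energy as $\int_\Omega |\nabla u_\eps m\,\mathbbm{1}_{\eps\Ysoft}|^2\dd x - |\Omega\cap\eps\Ysoft|$, applies H\"older's inequality on the soft set to bound this from below by $|\Omega\cap\eps\Ysoft|^{-1}\bigl(\int_\Omega |\nabla u_\eps m\,\mathbbm{1}_{\eps\Ysoft}|\dd x\bigr)^2 - |\Omega\cap\eps\Ysoft|$, and concludes with weak lower semicontinuity of the $L^1$-norm applied to $\nabla u_\eps\mathbbm{1}_{\eps\Ysoft}\weakly \nabla u-(1-\lambda)R$ (the same convergence you isolate) together with $|\Omega\cap\eps\Ysoft|\to\lambda|\Omega|$. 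The factor $1/\lambda$ comes from dividing by the measure of the soft set, and the constancy of $\nabla u$ on each piece is exactly what allows the limit of the squared $L^1$-norm to be evaluated --- this is where the piecewise-affine hypothesis enters. Your conditional-Jensen argument achieves the same $1/\lambda$ gain pointwise in $x$ and would in fact dispense with the piecewise-affine hypothesis, at the price of Young-measure machinery; your strip-by-strip version was rightly abandoned, since \eqref{add-on} gives no control layer by layer.

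Two points in your sketch need repair before it is a proof. First, you must generate the Young measure of the pair $(\mathbbm{1}_{\eps\Ysoft},\nabla u_\eps)$ (or otherwise carry the indicator along): the ``soft part'' of the limit measure cannot be read off from the Young measure of $\nabla u_\eps$ alone, because gradients in the soft layers may themselves lie in $SO(2)$ (take $\gamma_\eps=0$ there). With the pair $(\theta,A)$ the argument closes: the first marginal has barycenter $\lambda$, the barycenter of $\theta A$ equals $\nabla u-(1-\lambda)R$ by \eqref{add-on}, and Jensen applied to the conditional measure on $\{\theta=1\}$ gives $\liminf_{\eps\to0}\int_\Omega \mathbbm{1}_{\eps\Ysoft}|\nabla u_\eps m|^2\dd x\geq \int_\Omega \lambda\,\absb{\lambda^{-1}(\nabla u\, m-(1-\lambda)Rm)}^2\dd x$, which is the claim. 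Second, your assertion that the rigid contribution is ``concentrated at $R$ by \eqref{add-on}'' is not justified: \eqref{add-on} is a weak convergence and fixes only the barycenter of that contribution, not a Dirac structure. (Concentration does hold, but it follows from the strong convergence $\Sigma_\eps\to R$ inside the proof of Proposition~\ref{prop:rigidity}, not from the statement; fortunately it is not needed, since on the rigid set the integrand $|\nabla u_\eps m|^2-1$ vanishes identically.) Finally, the two intermediate manipulations you yourself flag as incorrect should simply be deleted; in particular, plain weak $L^2$ lower semicontinuity applied to $\mathbbm{1}_{\eps\Ysoft}\nabla u_\eps$ loses exactly the factor $1/\lambda$.
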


\begin{proof} One may assume in the following that $u$ is affine, otherwise the same arguments can be applied to each affine piece of $u$.

Since $\nabla u_\epsilon \rightharpoonup \nabla u$ in $L^2(\Omega;\R^{2\times 2})$, and by~\eqref{add-on}, $\nabla u_\epsilon\mathbbm{1}_{\epsilon\Yrig} \rightharpoonup (1-\lambda)R$ in $L^2(\Omega; \R^{2\times2})$, it follows that
\begin{align}\label{eq24}
\nabla u_\epsilon\mathbbm{1}_{\epsilon \Ysoft} \rightharpoonup \nabla u - (1-\lambda)R 
\quad \text{in~} L^2(\Omega; \R^{2\times 2}),
\end{align}
and thus, in particular in $L^1(\Omega; \R^{2\times 2})$. 
From $\abs{\nabla u_\eps m}=1$ a.e.~in $\eps\Yrig\cap \Omega$, H\"older's inequality, and~\eqref{eq24} in conjunction with the weak lower semicontinuity of the $L^1$-norm we infer that 
\begin{align*}
\liminf_{\eps\to 0} \int_{\Omega} \abs{\nabla u_\eps m}^2-1\dd{x} & =\liminf_{\eps\to 0} \int_\Omega \abs{\nabla u_\eps m\, \mathbbm{1}_{\eps\Ysoft}}^2\dd{x}-\abs{\Omega\cap \eps\Ysoft}\\
& \geq \liminf_{\eps\to 0} \abs{\Omega\cap \eps\Ysoft}^{-1} \Bigl(\int_{\Omega} \abs{\nabla u_\eps m\,\mathbbm{1}_{\eps\Ysoft}} \dd{x}\Bigr)^2 -\abs{\Omega\cap\eps\Ysoft}\\ 
& \geq\frac{\abs{\Omega}}{\lambda}\abs{\nabla um-(1-\lambda)Rm}^2-\lambda\abs{\Omega}. 
\end{align*}
Notice that in the last step we also used that $\abs{\Omega\cap\eps\Ysoft} = \int_{\Omega}\mathbbm{1}_{\eps\Ysoft}\dd{x}\to \lambda\abs{\Omega}$ as $\eps\to 0$, as well as the assumption that $\nabla u$ is constant.
\end{proof}
\color{black}


\section{Discussion of admissible deformations}\label{sec:rankone_lines}
In preparation for the proof of Theorem~\ref{theo:Gammalimit}, we exploit the specific form of the functionals $E_\eps$ to identify further properties of the weak limits of bounded energy sequences. Moreover, we provide the basis for the laminate constructions that are the key to obtaining suitable recovery sequences.

\subsection{Necessary conditions for admissible macroscopic deformations}
Let $(u_\eps)_\eps\subset$ \break $W^{1,2}(\Omega;\R^2)$ satisfy $\nabla u_\eps \in \Mcal_s$ a.e.~in $\Omega$, 
and suppose that $u_\eps\weakly u$ in $W^{1,2}(\Omega;\R^2)$ for some $u\in W^{1,2}(\Omega;\R^2)$. As the convex set $\{F\in L^2(\Omega;\R^{2\times 2}): \abs{Fs}\leq 1\text{ a.e.~in $\Omega$}\}$ is weakly closed in $L^2(\Omega;\R^{2\times 2})$ and $\det \nabla u_\eps \weaklystar \det \nabla u$ in the sense of measures (cf.~Remark~\ref{rem:rigidity}\,a)), we know that
\begin{align}\label{Msqc}
\nabla u\in \Ncal_s \quad \text{a.e.~in $\Omega$},
\end{align}
where $\Ncal_s=\{F\in \R^{2\times 2}: \det F=1, \abs{Fs}\leq 1\}$.
According to~\cite{CoT05} (see also \cite{CDK11}), the set $\Ncal_s$ is exactly the quasiconvex hull $\Mcal_s^{\rm qc}$ of $\Mcal_s$.
With $S=(s|m)=(s|s^\perp)\in SO(2)$, another alternative representation of $\Ncal_s$ is 
\begin{align*}
\Ncal_s= \big\{F\in \R^{2\times 2}: F= R\big(S (\alpha e_1\otimes e_1 + \tfrac{1}{\alpha}e_2\otimes e_2) S^T + \gamma s\otimes m\big), R\in SO(2),\alpha\in (0,1], \gamma\in \R\big\}.\end{align*} 
 One also has that
\begin{align}\label{representationMsNs}
\Mcal_s=\{F\in \R^{2\times 2}: F=GS^T, G\in \Mcal_{e_1}\} \quad\text{and}\quad \Ncal_s=\{F\in \R^{2\times 2}: F= GS^T, G\in \Ncal_{e_1}\}. 
\end{align}

If we assume in addition that $(u_\eps)_\eps$ is a sequence of bounded energy, precisely, $E_\eps(u_\eps)<C$ for all $\eps$, then 
$\nabla u \in \Mcal_{e_1}$ pointwise almost everywhere in $\Omega$
by the rigidity result in Proposition~\ref{prop:rigidity} and Remark~\ref{rem:rigidity}\,a). Thus, together with~\eqref{Msqc},
\begin{align}\label{Me1capNs}
\nabla u \in \Mcal_{e_1} \cap \Ncal_s\quad \text{a.e.~in $\Omega$.}
\end{align}
For $s=e_1$ the restriction in~\eqref{Me1capNs} is equivalent to $\nabla u\in \Mcal_{e_1}$ a.e., while for $s\neq e_1$ a straightforward computation shows that 
\begin{align}\label{char_Me1capNs}
\Mcal_{e_1}\cap \Ncal_s= & \{F\in \R^{2\times 2}: F=R(\Ibb+\gamma e_1\otimes e_2), R\in SO(2), \gamma \in K_{s,1} \},
\end{align}
with $K_{s, 1}$ as defined in~\eqref{def_Ks}.

In the case $s\neq e_1$, condition~\eqref{Me1capNs} can be refined even further by exploiting the presence of the rigid layers with their asymptotic volume fraction $\abs{Y_{\rm rig}} = 1-\lambda$.
Indeed, from Proposition~\ref{prop:rigidity} 
we infer that there are $R\in SO(2)$ and $\gamma\in L^2(\Omega)$ such that 
\begin{align*}
(\nabla u_\eps s)\mathbbm{1}_{\eps\Ysoft} = \nabla u_\eps s- (\nabla u_\eps s)\mathbbm{1}_{\eps\Yrig} \weakly  R(\lambda \Ibb  + \gamma e_1\otimes e_2)s \quad \text{in $L^2(\Omega;\R^{2})$,}
\end{align*}
and therefore also in $L^1(\Omega;\R^2)$.
On the other hand, $\abs{(\nabla u_\eps s) \mathbbm{1}_{\eps\Ysoft}} =\mathbbm{1}_{\eps\Ysoft}\weaklystar \lambda$ in $L^\infty(\Omega)$. By the weak lower semicontinuity of the $L^1$-norm, we obtain for any open ball $B\subset \Omega$ that
\begin{align*}
\int_B \abs{R(\lambda \Ibb + \gamma e_1\otimes e_2)s}\dd{x} \leq \lim_{\eps\to 0} \int_B \abs{(\nabla u_\eps s)\mathbbm{1}_{\eps\Ysoft}} \dd{x}= \abs{B}\lambda,
\end{align*}
and consequently,
\begin{align*}
\dashint_B \abs{\lambda s + \gamma s_2 e_1}\dd{x} \leq \lambda.
\end{align*}
Applying Lebesgue's differentiation theorem entails the pointwise estimate $\abs{\lambda s+ \gamma s_2 e_1}\leq \lambda$ a.e.~in $\Omega$, which is equivalent to
\begin{align*}
\gamma \in K_{s, \lambda} \quad \text{a.e.~in $\Omega$,}
\end{align*}
cf.~\eqref{def_Ks} for the definition of $K_{s, \lambda}$.

\subsection{Tools for the construction of admissible deformations}\label{subsec:construction_tools}

We start by characterizing all rank-one connections in $\Mcal_s$, cf.~also~\cite{ChK15PAMM}.
\begin{lemma}[Rank-one connections in \boldmath{$\Mcal_s$}]\label{lem:rankoneMSO2}
Let $F, G \in \Mcal_s$ such that $F= R (\mathbb{I} + \gamma s\otimes m)$ and $G=Q(\Ibb +\zeta s\otimes m)$ with $R, Q\in SO(2)$ and $\gamma, \zeta\in \mathbb{R}$. 
Then $F$ and $G$ are rank-one connected, i.e.~$\rank (F-G)$ =1, if and only if one of the following relations holds:

 $i)$ $R = Q$ and $\gamma \neq \zeta$,\\
 or 
 
 $ii)$ $R \neq Q$ and $\gamma-\zeta = 2 \tan(\theta/2)$, where $\theta\in (-\pi, \pi)$ denotes the rotation angle of $Q^TR$, meaning that $Q^TRe_1=\cos \theta e_1+ \sin \theta e_2$.
 
In particular, in case of $i)$, $F-G=(\gamma -\zeta )Rs\otimes m$, while for $ii)$ one has 
\begin{align}\label{eq20}
F-G=\frac{\gamma-\zeta}{4+(\gamma-\zeta)^2}Q((\zeta-\gamma)s+2m) \otimes (2s+(\gamma +\zeta)m).
\end{align} 
\end{lemma}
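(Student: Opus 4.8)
The plan is to reduce the characterization to a clean algebraic condition by exploiting the structure $F = R(\Ibb + \gamma s\otimes m)$, $G = Q(\Ibb + \zeta s\otimes m)$ with $S = (s\,|\,m)\in SO(2)$. First I would factor out $S^T$ on the right: by~\eqref{representationMsNs}, writing $F = \tilde F S^T$ and $G = \tilde G S^T$ with $\tilde F, \tilde G\in\Mcal_{e_1}$, we have $\rank(F-G) = \rank(\tilde F - \tilde G)$ since right-multiplication by the invertible $S^T$ preserves rank. So it suffices to treat the case $s = e_1$, $m = e_2$, where $F = R(\Ibb + \gamma e_1\otimes e_2)$, $G = Q(\Ibb + \zeta e_1\otimes e_2)$. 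Left-multiplying by $Q^T$ (again rank-preserving), set $P = Q^T R\in SO(2)$ with rotation angle $\theta\in(-\pi,\pi]$; then $\rank(F-G) = \rank\big(P(\Ibb + \gamma e_1\otimes e_2) - (\Ibb + \zeta e_1\otimes e_2)\big)$, and I would compute this $2\times2$ matrix $M(\theta,\gamma,\zeta)$ explicitly.

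The next step is the case distinction on whether $\theta = 0$ (i.e.\ $R = Q$). If $R = Q$, then $M = (\gamma - \zeta)\,e_1\otimes e_2$, which has rank one precisely when $\gamma\neq\zeta$; this gives case $i)$, and the formula $F - G = (\gamma-\zeta)Rs\otimes m$ follows immediately after undoing the two multiplications (note $Re_1\otimes e_2$ becomes $Rs\otimes m$ after restoring $S^T$). If $\theta\neq 0$, write $P e_1 = \cos\theta\, e_1 + \sin\theta\, e_2$, $Pe_2 = -\sin\theta\,e_1 + \cos\theta\,e_2$, and compute the columns of $M$: the first column is $(\cos\theta - 1, \sin\theta)$ and the second is $(\gamma\cos\theta - \sin\theta - \zeta, \gamma\sin\theta + \cos\theta - 1)$. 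Then $\rank M = 1$ iff $\det M = 0$ (the first column is nonzero since $\theta\neq 0$). Expanding $\det M$ and simplifying using $\cos\theta - 1 = -2\sin^2(\theta/2)$, $\sin\theta = 2\sin(\theta/2)\cos(\theta/2)$ should collapse the determinant condition to $(\gamma - \zeta)\cos(\theta/2) = 2\sin(\theta/2)$ — equivalently, since $\cos(\theta/2)\neq 0$ for $\theta\in(-\pi,\pi)$, to $\gamma - \zeta = 2\tan(\theta/2)$, which is case $ii)$. (The endpoint $\theta = \pi$, i.e.\ $P = -\Ibb$, gives $M = -(\Ibb + \zeta e_1\otimes e_2) - (\Ibb + \gamma e_1\otimes e_2) = -(2\Ibb + (\gamma+\zeta)e_1\otimes e_2)$, which is invertible, so it is correctly excluded by restricting to $\theta\in(-\pi,\pi)$.)

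Finally, for the rank-one decomposition~\eqref{eq20} in case $ii)$, once $\det M = 0$ I would read off a spanning vector $a$ of the column space and a vector $b$ with $M = a\otimes b$. From the first column $(\cos\theta - 1,\sin\theta)^T = -2\sin(\theta/2)\big(\sin(\theta/2)\,e_1 - \cos(\theta/2)\,e_2\big)$; using $\tan(\theta/2) = (\gamma-\zeta)/2$ one rewrites $\sin(\theta/2), \cos(\theta/2)$ in terms of $\gamma - \zeta$ (up to a common factor $1/\sqrt{4 + (\gamma-\zeta)^2}$), obtaining that the column space is spanned by $(\zeta - \gamma)e_1 + 2e_2$ — or rather its image under $Q$ after undoing $Q^T$. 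Matching the second column then fixes $b$ proportional to $2e_1 + (\gamma+\zeta)e_2$, and tracking the scalar prefactor through the normalizations yields exactly $\frac{\gamma-\zeta}{4+(\gamma-\zeta)^2}Q\big((\zeta-\gamma)s + 2m\big)\otimes\big(2s + (\gamma+\zeta)m\big)$; here restoring the $S^T$ on the right sends $e_1\mapsto s$, $e_2\mapsto m$ in the second factor, and the $Q$ on the left sends $e_1\mapsto Qs$, $e_2\mapsto Qm$ in the first factor (consistent with $Q$ acting on $(\zeta-\gamma)e_1 + 2e_2$). The main obstacle I anticipate is purely bookkeeping: keeping the half-angle substitutions and the two rank-preserving conjugations straight so that the final tensor decomposition comes out in precisely the stated normalized form, including the correct sign conventions for $\theta$ and the factor $4 + (\gamma-\zeta)^2$; there is no conceptual difficulty, just a need for careful trigonometric simplification.
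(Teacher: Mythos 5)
Your proposal is correct and follows essentially the same route as the paper's proof: reduce to $s=e_1$ via~\eqref{representationMsNs}, normalize the rotation $Q$ away, characterize rank one through $\det(F-G)=0$ (your determinant identity $(\gamma-\zeta)\sin\theta=2(1-\cos\theta)$ is exactly the paper's condition $Re_1\cdot(2e_1+(\gamma-\zeta)e_2)=2$), convert via half-angle identities to $\gamma-\zeta=2\tan(\theta/2)$, and verify~\eqref{eq20} by direct computation. Your write-up is merely more explicit (spelling out the matrix, its determinant, and the excluded case $Q^TR=-\Ibb$), which is fine; only the parenthetical description of how $Q$ acts on the first factor is loosely worded, but the transferred formula is the correct one.
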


\begin{proof}
Considering~\eqref{representationMsNs}, it is enough to prove the statement for $s=e_1$. Moreover, we may assume without loss of generality that $Q=\Ibb$. 
It follows from $\det(F-G) = 0$ that 
\begin{align*}
Re_1\cdot (2e_1+ (\gamma-\zeta)e_2)=2.
\end{align*}
Thus, either $Re_1=e_1$, i.e.~$R=\Ibb$, or 
\begin{align}\label{eq18}
Re_1= \frac{4-(\gamma-\zeta)^2}{4+(\gamma-\zeta)^2}e_1 +  \frac{4(\gamma-\zeta)}{4+(\gamma-\zeta)^2}e_2.
\end{align} 
In view of the definition of $\theta$ and some basic identities for trigonometric functions,~\eqref{eq18} is equivalent to $\gamma-\zeta=2\tan(\theta/2)$. The representation of $F-G$ in~\eqref{eq20} is then straightforward to compute. 
\end{proof}

\begin{remark}\label{rem:rank1_connections}
As this paper is concerned with materials built from horizontal layers, we are especially interested in rank-one connections with normal $e_2$, i.e.~$F, G\in \Mcal_s$ as in Lemma~\ref{lem:rankoneMSO2} with $\rank(F-G)=1$ satisfying $F-G=a\otimes e_2$ for some $a\in \R^2\setminus \{0\}$. 
If $s=e_1$, this implies $R=Q$, but there are no restrictions on $\gamma$ and $\zeta$ other than $\gamma\neq \zeta$.
For $s\neq e_1$ one needs that $\gamma+\zeta=2\frac{s_1}{s_2}$. Hence, for given $\gamma\in \R$ also the rotation matrix $Q^TR$ is uniquely determined in this case.
\end{remark}

It was first proven in~\cite{CoT05} that $\Ncal_s=\Mcal_s^{\rm qc}$ coincides with the rank-one convex hull $\Mcal_s^{\rm rc}$, which in particular, means that every $N\in \Ncal_s$ can be expressed as a convex combination of rank-one connected matrices in $\Mcal_s$. A specific type of rank-one directions, which turns out optimal for the relaxation of $W_{\rm soft}$, was discussed by Conti in~\cite{Con06}, see also~\cite{CDK11}. Here we give a different argumentation based on Lemma~\ref{lem:rankoneMSO2}.
 
\begin{lemma}\label{lem:construction_rankoneconnections}
For a given $N\in \Ncal_s\setminus\Mcal_s$ there are $F, G\in \Mcal_s$ as in Lemma~\ref{lem:rankoneMSO2} with $\rank(F-G)=1$ and $\mu\in (0,1)$ such that 
\begin{align}\label{eq19}
N=\mu F+(1-\mu)G \qquad\text{and}\qquad \abs{Nm}=\abs{Fm}=\abs{Gm}. 
\end{align}
\end{lemma}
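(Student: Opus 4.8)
The plan is to reduce immediately to the case $s=e_1$ using the relation $\Ncal_s=\{GS^T: G\in \Ncal_{e_1}\}$ and $\Mcal_s=\{GS^T: G\in \Mcal_{e_1}\}$ from~\eqref{representationMsNs}: if $N=N'S^T$ with $N'\in \Ncal_{e_1}\setminus\Mcal_{e_1}$ and we find $F',G'\in\Mcal_{e_1}$ rank-one connected with $N'=\mu F'+(1-\mu)G'$ and $\abs{N'm'}=\abs{F'm'}=\abs{G'm'}$ for $m'=e_2$, then $F=F'S^T$, $G=G'S^T$ do the job since $\rank(F-G)=\rank(F'-G')$ and, using $S^Tm = S^Ts^\perp = e_2$, we have $\abs{Fm}=\abs{F'S^Ts^\perp}=\abs{F'e_2}$, etc. So from now on assume $s=e_1$, $m=e_2$, and $N\in\Ncal_{e_1}\setminus\Mcal_{e_1}$.

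Next I would use the structure of $\Ncal_{e_1}$: write $N=R(\alpha e_1\otimes e_1+\tfrac1\alpha e_2\otimes e_2+\gamma e_1\otimes e_2)$ with $R\in SO(2)$, $\alpha\in(0,1)$ (strictly, since $N\notin\Mcal_{e_1}$ forces $\alpha\neq 1$), $\gamma\in\R$. Without loss of generality take $R=\Ibb$ (apply $R^{-1}$, which preserves everything including $|\cdot e_2|$). So $N=\begin{pmatrix}\alpha&\gamma\\0&1/\alpha\end{pmatrix}$, with $\abs{Ne_2}^2=\gamma^2+1/\alpha^2$. I look for $F=Q_1(\Ibb+\gamma_1 e_1\otimes e_2)$, $G=Q_2(\Ibb+\gamma_2 e_1\otimes e_2)$ in $\Mcal_{e_1}$ and $\mu\in(0,1)$. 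By Remark~\ref{rem:rank1_connections}, a rank-one connection $F-G=a\otimes e_2$ with $s=e_1$ means $Q_1=Q_2=:Q$, and then $F-G=(\gamma_1-\gamma_2)Qe_1\otimes e_2$. But that can't give $\alpha\neq 1$ on the diagonal unless we allow $Q\neq\Ibb$... so instead I want a rank-one connection that is \emph{not} normal to $e_2$. Use Lemma~\ref{lem:rankoneMSO2}(ii): pick $F=Q(\Ibb+\gamma_1 e_1\otimes e_2)$, $G=\Ibb+\gamma_2 e_1\otimes e_2$ with $\gamma_1-\gamma_2=2\tan(\theta/2)$ where $\theta$ is the rotation angle of $Q$. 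The condition $N=\mu F+(1-\mu)G$ is two scalar equations (the matrix difference $F-G$ is rank one, so $N-G$ being a multiple of it is the real constraint, plus matching), and the energy condition $\abs{Ne_2}=\abs{Fe_2}=\abs{Ge_2}$ gives $\gamma_1^2=\gamma_2^2=\gamma^2+1/\alpha^2-$(something); actually $\abs{Ge_2}^2=\gamma_2^2+1$ and $\abs{Fe_2}^2=\gamma_1^2+1$, so the energy condition forces $\gamma_1^2=\gamma_2^2$, hence $\gamma_1=-\gamma_2$ (the case $\gamma_1=\gamma_2$ is excluded as $F\neq G$), and then $\abs{Ne_2}^2=\gamma_1^2+1$, i.e. $\gamma_1^2=\gamma^2+1/\alpha^2-1=\gamma^2+(1-\alpha^2)/\alpha^2$, which is $>0$ since $\alpha\in(0,1)$; set $\gamma_1=-\gamma_2=\sqrt{\gamma^2+(1-\alpha^2)/\alpha^2}$.

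With $\gamma_2=-\gamma_1$ fixed, $\theta$ is determined by $2\tan(\theta/2)=\gamma_1-\gamma_2=2\gamma_1$, so $Q$ is determined; then I must verify that $\mu\in(0,1)$ can be chosen so that $N=\mu F+(1-\mu)G$ actually holds — this is the step I expect to be the real content. Writing it out: $F=Q+\gamma_1 Qe_1\otimes e_2$ and $G=\Ibb-\gamma_1 e_1\otimes e_2$, so $\mu F+(1-\mu)G=\mu Q+(1-\mu)\Ibb+(\mu\gamma_1 Qe_1-(1-\mu)\gamma_1 e_1)\otimes e_2$. Matching the first column to $Ne_1=\alpha e_1$ gives $\mu Qe_1+(1-\mu)e_1=\alpha e_1$, i.e. $\mu(Qe_1-e_1)=(\alpha-1)e_1$; since $Qe_1=\cos\theta\, e_1+\sin\theta\, e_2$, the $e_2$-component reads $\mu\sin\theta=0$ — a contradiction unless $\sin\theta=0$. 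This tells me my normalization $R=\Ibb$ was too hasty: the correct statement is that $N=R'(\mu F+(1-\mu)G)$ for the appropriate overall rotation $R'$ absorbed into $F,G$; equivalently I should \emph{not} assume $R=\Ibb$ for $N$ but rather allow $Q_2\neq\Ibb$ and choose it, together with $Q_1$ and the overall orientation, to match. The clean way: parametrize $G=Q_2(\Ibb-\gamma_1 e_1\otimes e_2)$, $F=Q_1(\Ibb+\gamma_1 e_1\otimes e_2)$ with $Q_1^TQ_2$ having rotation angle $\mp\theta$, $2\tan(\theta/2)=2\gamma_1$, and then the two equations $N=\mu F+(1-\mu)G$ (four scalar equations) together with the three parameters $\mu$, and the two angles of $Q_1,Q_2$ (one of which is fixed relative to the other), plus the already-fixed $\gamma_1$ — a dimension count gives a solution, and one checks $\mu\in(0,1)$ by a continuity/degree argument or by solving explicitly the $2\times2$ linear system for the pair $(\mu Q_1, (1-\mu)Q_2)$ entries. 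I expect the main obstacle to be exactly this: verifying solvability with $\mu$ genuinely in the open interval $(0,1)$, which amounts to checking that $N$ lies strictly between $F$ and $G$ on the rank-one segment; this should follow because $N\in\Ncal_{e_1}\setminus\Mcal_{e_1}$ sits in the interior of the relevant lamination, but making it rigorous requires the explicit computation with~\eqref{eq20} rather than hand-waving.
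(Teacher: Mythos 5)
Your first half runs parallel to the paper: the reduction to $s=e_1$ is fine, and the energy condition in \eqref{eq19} does force $\gamma_1^2=\gamma_2^2=\abs{Nm}^2-1>0$, hence (excluding $F=G$) $\gamma_1=-\gamma_2=\sqrt{\abs{Nm}^2-1}$ with the relative rotation angle fixed by $2\tan(\theta/2)=2\gamma_1$; this is exactly the paper's choice \eqref{eq21}. But the heart of the lemma is the remaining step you leave open: producing $\mu\in(0,1)$ and the absolute rotation of $G$ (equivalently of $F$) so that $N=\mu F+(1-\mu)G$ holds as a matrix identity. You offer a "dimension count" plus a "continuity/degree argument", and you concede this is hand-waving; as it stands it is a genuine gap, and the naive count even points the wrong way (four scalar equations against the two remaining unknowns $\mu$ and the angle of $Q$), so solvability is not a soft fact — it hinges on a structural observation you never isolate. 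Namely, with $\zeta=-\gamma$ formula \eqref{eq20} gives $\gamma+\zeta=0$, so $F-G$ is a multiple of $Q(m-\gamma s)\otimes s$, i.e.\ $(F-G)m=0$ and $Fm=Gm$. This decouples the system: the $m$-column of $N=\mu F+(1-\mu)G$ collapses to the $\mu$-free condition $Gm=Nm$, while the $s$-column becomes $Ns=Q\bigl(s+\tfrac{2\gamma\mu}{1+\gamma^2}(m-\gamma s)\bigr)$; taking squared norms in the latter yields a quadratic in $\mu$ with roots $\mu_1\in(0,\tfrac12)$, $\mu_2\in(\tfrac12,1)$, $\mu_1+\mu_2=1$, each root fixing its own $Q$. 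One then checks that $Gm$ automatically satisfies $(Ns)^\perp\cdot Gm=1$ and $\abs{Gm}=\abs{Nm}$, i.e.\ $Gm$ lies in the two-point set $\Acal_N$ containing $Nm$, and the correct choice among $\mu_1,\mu_2$ makes $Gm=Nm$. Without noticing $(F-G)m=0$, your plan of "solving the $2\times 2$ linear system" has no reason to be consistent, which is precisely what your own computation demonstrated.

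A secondary inaccuracy: when your explicit attempt hit the contradiction $\mu\sin\theta=0$, you blamed the normalization $R=\Ibb$ for $N$. That normalization is harmless (left-multiplying $N,F,G$ by a fixed rotation preserves $\Mcal_{e_1}$, $\Ncal_{e_1}$, rank-one connectedness and all the quantities $\abs{\cdot\, e_2}$). What broke the computation was the additional choice $Q_2=\Ibb$: the rotation of $G$ relative to $N$ is exactly the degree of freedom that must be solved for (and in the correct solution it is nontrivial, determined through $Gm=Nm$), so it cannot be normalized away on top of $R=\Ibb$.
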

\begin{proof} Let $N\in \Ncal_s\setminus \Mcal_s$.
We determine $\mu\in (0,1)$ as well as $Q, R\in SO(2)$ and $\zeta, \gamma\in \R$ such that the desired properties are satisfied for $F=R(\Ibb + \gamma s\otimes m)$ and $G=Q(\Ibb +\zeta s\otimes m)$.

Since the second condition in~\eqref{eq19} is equivalent to $\gamma$ and $\zeta$ satisfying $\abs{\gamma}^2=\abs{\zeta}^2=\abs{Nm}^2-1$,
we may choose
\begin{align}\label{eq21}
\gamma=  \sqrt{\abs{Nm}^2-1}\qquad\text{and}\qquad \zeta=-\gamma.
\end{align}
Notice that $\abs{Nm}>1$, as $1=\det N\leq \abs{Ns}\abs{Nm}$ and $\abs{Ns}<1$ by assumption.
Then, by Lemma~\ref{lem:rankoneMSO2}, $F$ and $G$ are rank-one connected, if $Q^TR$ corresponds to the rotation with angle $\theta=2\arctan \gamma$. We use this relation to define $R$ for given $Q$ to be determined in the next step.

For the first part of~\eqref{eq19}, 
it is necessary that
\begin{align}\label{eq22}
Ns= Gs+\mu(F-G)s=Q\Bigl(s+\frac{2\gamma\mu}{1+\gamma^2}(m-\gamma s)\Bigr),
\end{align} 
where we have used~\eqref{eq20} along with~\eqref{eq21}.
Taking squared norms in the above equation imposes a constraint on $\mu$ in the form of a quadratic equation, which has 
two solutions $\mu_1\in(0,1/2)$ and  $\mu_2\in (1/2,1)$ with $\mu_1+\mu_2=1$.  
Depending on which of these values is selected for $\mu$, we adjust the rotation $Q$ so that~\eqref{eq22} holds. 
It follows from~\eqref{eq22} and~\eqref{eq21} that
\begin{align*}
 Gm \in \Acal_N=\{a\in \R^2: (Ns)^\perp\cdot a =1, \abs{a}=\abs{Nm}\}.
\end{align*} 
As $\abs{Nm}^{-1}\leq \abs{Ns}$, the set $\Acal_N$ contains exactly two elements, one of which being $Nm$. 
Finally, we take $\mu\in (0,1)$ with corresponding $Q$ such that $Gm=Nm$, which finishes the proof.
\color{black}

Let us remark that choosing $\gamma=-\sqrt{\abs{Nm}^2-1}$ in~\eqref{eq21} essentially comes up to switching $F$ and $G$.
\end{proof}

In the case of a non-horizontal slip direction, optimal constructions of admissible deformations cannot be achieved based on rank-one connections in $\Mcal_s$. Instead, we employ simple laminates with gradients in $SO(2)$ and $\Ncal_s$ (and normal $e_2$). The following one-to-one correspondence between $\gamma\in K_{s, \lambda}$ and $R\in SO(2)$, and $N\in \Ncal_s$ with $\abs{Ne_1}=1$ is helpful for the explicit constructions.

\begin{lemma} \label{lem: specific laminates in N}
Let $\lambda\in (0,1)$ and $s\in \R^2$ with $\abs{s}=1$ and $s\neq e_1$ be given.

i) For every $\gamma\in K_{s, \lambda}$ and $R\in SO(2)$ there exists $N\in  \Mcal_{e_1}\cap \Ncal_s$ such that $Ne_1=Re_1$ and
\begin{align}\label{equality_construction}
\lambda N + (1-\lambda)R=R(\Ibb+\gamma e_1\otimes e_2).
\end{align}

ii) Let $N \in \Ncal_s$ and $R\in SO(2)$ with $Re_1=Ne_1$. Then there exists $\gamma\in K_{s, \lambda}$ such that
\eqref{equality_construction} is satisfied.
\end{lemma}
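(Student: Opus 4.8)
The statement is a purely algebraic one-to-one correspondence, so I would treat parts i) and ii) by direct computation, unwinding the definitions. Write $S=(s\mid m)\in SO(2)$ as in the excerpt. The key remark is that \eqref{equality_construction} is equivalent to
\[
\lambda(N-R)=\gamma\, R\, e_1\otimes e_2,
\]
so that $N=R(\Ibb+\tfrac{\gamma}{\lambda}e_1\otimes e_2)$; in particular $Ne_1=Re_1$ is automatic, and the only real content is to check that this $N$ lies in $\Ncal_s$ (resp.\ that the $\gamma$ so produced lies in $K_{s,\lambda}$) precisely under the stated hypotheses. Since $\det N = \det R\cdot\det(\Ibb+\tfrac\gamma\lambda e_1\otimes e_2)=1$ always, membership in $\Ncal_s=\{F:\det F=1,\ \abs{Fs}\le 1\}$ reduces to the single scalar inequality $\abs{Ns}\le 1$. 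Compute
\[
Ns=R\bigl(s+\tfrac{\gamma}{\lambda}(e_1\otimes e_2)s\bigr)=R\bigl(s+\tfrac{\gamma}{\lambda}s_2\,e_1\bigr),
\]
hence $\abs{Ns}^2=\abs{s+\tfrac{\gamma}{\lambda}s_2 e_1}^2=1+\tfrac{\gamma s_2}{\lambda}\bigl(\tfrac{\gamma s_2}{\lambda}+2s_1\bigr)$, using $\abs s=1$. So $\abs{Ns}\le1$ iff $\tfrac{\gamma s_2}{\lambda}\bigl(\tfrac{\gamma s_2}{\lambda}+2s_1\bigr)\le 0$, i.e.\ iff $\gamma$ lies between $0$ and $-2\tfrac{s_1}{s_2}\lambda$ — which, inspecting the sign of $s_1s_2$, is exactly the condition $\gamma\in K_{s,\lambda}$ from \eqref{def_Ks} (the cases $s=e_2$, i.e.\ $s_1=0$, and $s=e_1$ excluded being handled automatically, the former forcing $\gamma=0$, the latter excluded by hypothesis).

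For part i), given $\gamma\in K_{s,\lambda}$ and $R\in SO(2)$, set $N:=R(\Ibb+\tfrac{\gamma}{\lambda}e_1\otimes e_2)$. Then $Ne_1=Re_1$ and $\abs{Ne_1}=1$, so $N\in\Mcal_{e_1}$; the computation above shows $\abs{Ns}\le1$ and $\det N=1$, so $N\in\Ncal_s$; and \eqref{equality_construction} holds by construction since $\lambda N+(1-\lambda)R=R(\lambda\Ibb+\gamma e_1\otimes e_2+(1-\lambda)\Ibb)=R(\Ibb+\gamma e_1\otimes e_2)$. For part ii), given $N\in\Ncal_s$ and $R\in SO(2)$ with $Re_1=Ne_1$, I first argue that $R^TN$ has the form $\Ibb+\beta e_1\otimes e_2$ for some $\beta\in\R$: indeed $R^TN$ has determinant $1$ and first column $R^Te_1\cdot(\text{first column of }N)=e_1$ (since $Ne_1=Re_1$), and a $2\times2$ matrix with first column $e_1$ and determinant $1$ necessarily has the form $\begin{pmatrix}1&b\\0&1\end{pmatrix}$. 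Then set $\gamma:=\lambda\beta$; \eqref{equality_construction} holds as before, and the inequality $\abs{Ns}\le1$ (which holds since $N\in\Ncal_s$) translates, via the same scalar computation with $\tfrac{\gamma}{\lambda}=\beta$, into $\gamma\in K_{s,\lambda}$.

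The only mildly delicate point is the bookkeeping of the case distinction in \eqref{def_Ks}: one must confirm that the inequality $\tfrac{\gamma s_2}{\lambda}(\tfrac{\gamma s_2}{\lambda}+2s_1)\le0$ really does unpack to the four listed intervals according to $\mathrm{sgn}(s_1s_2)$ and the degenerate slips, and that when $s_1=0$ it forces $\gamma=0$, consistent with $K_{e_2,\lambda}=\{0\}$. This is routine sign-chasing rather than a genuine obstacle. I do not foresee any real difficulty; the lemma is essentially the observation that \eqref{equality_construction} pins $N$ down uniquely given $(\gamma,R)$ and vice versa, together with the elementary fact that for such affine-shear matrices the constraint $\abs{Ns}\le1$ is exactly $\gamma\in K_{s,\lambda}$.
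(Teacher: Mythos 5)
Your proposal is correct and follows essentially the same route as the paper: in i) both take $N=R(\Ibb+\tfrac{\gamma}{\lambda}e_1\otimes e_2)$ and reduce membership in $\Ncal_s$ to the scalar inequality $\absb{s+\tfrac{\gamma}{\lambda}s_2e_1}\le 1$, which is exactly $\gamma\in K_{s,\lambda}$; in ii) your observation that $R^TN$ is unipotent upper triangular yields the same $\gamma=\lambda\beta=\lambda\, Ne_1\cdot Ne_2$ as the paper's choice, with the verification of $\gamma\in K_{s,\lambda}$ running through the identical computation.
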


\begin{proof}
For the proof of $i)$ we set 
\begin{align}\label{Ngamma}
N=R(\Ibb+\tfrac{\gamma}{\lambda} e_1\otimes e_2).
\end{align} 
It is immediate to check that $Ne_1=Re_1$,~\eqref{equality_construction} is fulfilled, $N\in \Mcal_{e_1}$, and $\abs{Ns} = \abs{s+ \tfrac{\gamma}{\lambda} s_2 e_1}\leq 1$ 
in view of $\gamma\in K_{s, \lambda}$.

As regards $ii)$, choosing $\gamma=\lambda Ne_1\cdot Ne_2$ gives the desired element in $K_{s, \lambda}$. Indeed, $R(\Ibb+\gamma e_1\otimes e_2)e_2= Re_2 +\lambda(Ne_1 \cdot Ne_2)Ne_1= Re_2 + \lambda Ne_2 - \lambda ((Ne_1)^\perp\cdot Ne_2) (Ne_1)^\perp=\lambda Ne_2 + (1-\lambda)Re_2$ in view of $1=\det N=(Ne_1)^\perp\cdot Ne_2$, which along with $Ne_1=Re_1$ proves~\eqref{equality_construction}. 
Since~\eqref{equality_construction} implies that $N$ is of the form \eqref{Ngamma}, it follows from a direct computation that $\gamma\in K_{s, \lambda}$. 
\color{black}
\end{proof}

The following two theorems are taken from Conti \& Theil~\cite{CoT05} and M\"uller \& \v{S}ver\'{a}k~\cite{MuS99}, respectively. In combination, they allow us modify a simple laminate with gradients in $\Mcal_s$ in a small part of the domain in such a way that the resulting Lipschitz function takes affine boundary values in $\Ncal_s$, while preserving the constraint that gradients lie pointwise in $\Mcal_s$, see~Corollary~\ref{cor:convexintegration}.

\begin{theorem}[{\bf \cite[Theorem~4]{CoT05}}]\label{theo: Conti Theil}
Let $\Omega\subset \R^2$ be a bounded domain and $\mu\in (0,1)$. Suppose that $F, G\in \Mcal_s$ are rank-one connected with $Fs\neq Gs$ and $N=\mu F +(1-\mu)G\in \Ncal_s$.

Then for every $\delta>0$ there are $h_\delta^0 > 0$ and $\Omega_\delta \subset \Omega$ with $|\Omega\setminus\Omega_\delta| < \delta$ such that the restriction to $\Omega_\delta$ of any simple laminate between the gradients $F$ and $G$ with weights $\mu$ and $1-\mu$ and period $h < h_\delta^0$ can be extended to a finitely piecewise affine function $v_\delta: \Omega \rightarrow \R^2$ with $\nabla v_\delta\in \Ncal_s$ a.e.~in $\Omega$, $v_\delta = Nx$ on $\partial \Omega$, and $\dist(\nabla v_\delta, [F,G]) < \delta$ a.e.~in $\Omega$, where $[F, G]=\{t F+ (1-t)G: t\in [0,1]\}$. 
\end{theorem}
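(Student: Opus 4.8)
The plan is to give an explicit finitely‑piecewise‑affine construction: keep the prescribed laminate on an interior core $\Omega_\delta$ and, inside a thin boundary collar, interpolate to the affine datum $Nx$ in such a way that every gradient value lands on — or within $\delta$ of — the rank‑one segment $[F,G]$, which is automatically forced to lie in $\Ncal_s$. \textbf{Reduction and explicit laminate.} Using the identities \eqref{representationMsNs} together with the invariance of $\Mcal_{e_1}$ and $\Ncal_{e_1}$ under left multiplication by $SO(2)$, the statement for general $s$ reduces (after the change of variables $y\mapsto S^Ty$ in the domain and a left rotation) to the case $s=e_1$ with, say, $G=\Ibb+\zeta e_1\otimes e_2$; by Lemma~\ref{lem:rankoneMSO2} we may then write $F-G=a\otimes\nu$ with $|\nu|=1$. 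A simple laminate $u_h$ between $F$ and $G$ with weights $\mu,1-\mu$ and period $h$ is of the form $u_h(x)=Nx+h\,\Psi(x\cdot\nu/h)\,a$, where $\Psi$ is Lipschitz, $1$‑periodic, with $\Psi'=1-\mu$ on $(0,\mu)$ and $\Psi'=-\mu$ on $(\mu,1)$, so that $\nabla u_h=N+\Psi'(x\cdot\nu/h)\,a\otimes\nu\in\{F,G\}$ and $\|u_h-N\,\id\|_{L^\infty(\Omega)}\le Ch$; in particular $u_h\to N\,\id$ uniformly as $h\to0$.

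\textbf{The key geometric fact.} On the affine line $\ell(t)=N+t\,a\otimes\nu$, the function $t\mapsto\det\ell(t)$ is affine and $t\mapsto|\ell(t)s|^2$ is convex; both take the value $1$ (respectively $\le1$) at the two parameters $t_G<t_F$ corresponding to $G$ and $F$. Hence the whole closed segment $[F,G]=\ell([t_G,t_F])$ lies in $\Ncal_s$, and, decisively, remaining on $\ell$ keeps $\det\equiv1$ exactly. This is what makes $[F,G]$ — rather than an $n$‑dimensional blob — the right target set for the transition gradients.

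\textbf{Collar, cutoff, and corners.} Choose $\rho=\rho(\delta)>0$ so that the core $\Omega_\delta:=\{x\in\Omega:\dist(x,\partial\Omega)>\rho\}$ satisfies $|\Omega\setminus\Omega_\delta|<\delta$. On the portions of the collar where $\partial\Omega$ is locally orthogonal to $\nu$, use a cutoff $\eta=\eta(x\cdot\nu)$ and set $v_\delta(x)=Nx+h\,\eta(x\cdot\nu)\,\Psi(x\cdot\nu/h)\,a$; then $\nabla v_\delta=N+\bigl(\eta\Psi'+h\eta'\Psi\bigr)\,a\otimes\nu$ stays \emph{exactly} on $\ell$, and choosing $h<h_\delta^0$ with $h_\delta^0/\rho$ small keeps the line parameter inside $[t_G,t_F]$ except on a slab of width $o(\rho)$ adjacent to $\partial\Omega_\delta$, which is absorbed into a finitely‑piecewise‑affine transition assembled only from matrices on $[F,G]$ and compatible with the period‑$h$ oscillation on its inner edge. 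On the remaining corner regions, where $\partial\Omega$ is not orthogonal to $\nu$, one interpolates by an explicit fan of finitely many affine pieces whose gradients are convex combinations of $F$, $G$ and $N$, hence lie in $[F,G]\subset\Ncal_s$; continuity across all interfaces holds because consecutive pieces are, by construction, rank‑one connected. Collecting everything, $v_\delta$ is finitely piecewise affine, $v_\delta=Nx$ on $\partial\Omega$, $v_\delta=u_h$ on $\Omega_\delta$, $\nabla v_\delta\in\Ncal_s$ a.e., and $\dist(\nabla v_\delta,[F,G])<\delta$.

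\textbf{The main obstacle.} The genuinely delicate point is the exact incompressibility $\det\nabla v_\delta=1$: since $\Ncal_s$ is a three‑dimensional subset of $\R^{2\times2}$, a naive cutoff (which generates an off‑line term $h\,\Psi\,a\otimes\nabla\eta$ with $\nabla\eta\not\parallel\nu$) immediately leaves $\Ncal_s$, not merely its $\delta$‑neighborhood. Every intermediate gradient must therefore be realized along rank‑one segments joining elements of $\Mcal_s$ — equivalently, along one‑parameter families on which $\det\equiv1$ — so the transition really has to be a carefully built piecewise‑affine gadget that is simultaneously compatible with the fine scale $h$ on its inner side and with the $O(1)$ geometry (and corners) of $\partial\Omega$ on its outer side. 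The two freedoms granted by the statement — discarding a set of measure $<\delta$, and accepting $\dist(\nabla v_\delta,[F,G])<\delta$ instead of $\nabla v_\delta\in[F,G]$ — are precisely what make this bookkeeping feasible.
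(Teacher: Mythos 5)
First, note that the paper does not prove this statement at all: it is quoted verbatim from Conti and Theil \cite[Theorem~4]{CoT05}, so your argument has to stand on its own. It does not, because the boundary transition --- which is the entire content of the theorem --- is asserted rather than constructed, and the specific gadgets you describe cannot exist. Any two distinct matrices in $[F,G]$ differ by a multiple of $a\otimes\nu$, so the Hadamard jump condition forces every interface between affine pieces with distinct gradients in $[F,G]$ to have normal $\nu$; a ``fan'' of affine pieces meeting at a corner, with non-parallel interfaces and gradients that are convex combinations of $F$, $G$ and $N$, is therefore incompatible with continuity. More globally, if $\nabla v\in N+\R\,a\otimes\nu$ a.e.\ in $\Omega$, then curl-freeness gives $v(x)=Nx+\Phi(x\cdot\nu)\,a+c$ for a scalar Lipschitz function $\Phi$, and the boundary condition $v=Nx$ on $\partial\Omega$ forces $\Phi$ to be constant (every line $\{x\cdot\nu=\mathrm{const}\}$ that meets the bounded domain $\Omega$ also meets $\partial\Omega$), hence $v\equiv Nx$. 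So there is no nontrivial extension whose gradients stay on, or parallel to, the segment; this is precisely why the theorem only demands $\dist(\nabla v_\delta,[F,G])<\delta$ rather than $\nabla v_\delta\in[F,G]$, and it rules out the ``transition assembled only from matrices on $[F,G]$'' into which you propose to absorb the problematic slab.

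Your cutoff on the ``flat'' portions hits the same constraint from the other side. The gradient $N+(\eta\Psi'+h\eta'\Psi)\,a\otimes\nu$ does stay on the line $\ell$, but its parameter exits $[t_G,t_F]$ by an amount of order $h/\rho$, and since $t\mapsto|\ell(t)s|^2$ is a strictly convex quadratic (strict convexity is exactly the hypothesis $Fs\neq Gs$) taking the value $1$ at both $t_G$ and $t_F$, one has $|\ell(t)s|>1$ strictly outside the segment. Hence $\nabla v_\delta\notin\Ncal_s$ on the transition slab, whereas the theorem requires exact membership in $\Ncal_s$, not membership up to an error. The missing idea --- and the actual content of \cite[Theorem~4]{CoT05} --- is a genuinely two-dimensional piecewise affine boundary construction whose gradients leave the rank-one line $N+\R\,a\otimes\nu$ (for instance via suitably rotated laminate gradients) while remaining exactly in $\Ncal_s$, i.e.\ keeping $\det=1$ and $|\cdot\,s|\le 1$, and only $\delta$-close to $[F,G]$. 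Your final paragraph correctly identifies this as the main obstacle, but the proposal never supplies such a construction, so the proof has a genuine gap at its central step.
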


Convex integration methods help to obtain exact solutions to partial differential inclusions.
\begin{theorem}[{\bf \cite[Theorem~1.3]{MuS99}}]\label{thm:convexintegration}
Let $\Mcal \subset \{F \in \R^{2\times 2} : \det F = 1 \}$.  
Suppose that $(U_i)_{i}$ is an in-approximation of $\Mcal$, i.e.,~the sets $U_i$ are open in $\{F\in \R^{2\times 2}:\det F=1\}$ and uniformly bounded, $U_i$ is contained in the rank-one convex hull of $U_{i+1}$ for every $i\in \N$, and $(U_i)_i$ converges to $\Mcal$ in the following sense: if $F_i \in U_i$ for $i\in\N$ and $\abs{F_i - F}\to 0$ as $i\to \infty$, then $F \in \Mcal$. 

Then, for any $F\in U_1$ and any open domain $\Omega \subset \R^2$, there exists $u\in W^{1, \infty}(\Omega;\R^2)$ such that
$\nabla u \in \Mcal$ a.e.~in $\Omega$ and $u = Fx$ on $\partial \Omega$.
\end{theorem}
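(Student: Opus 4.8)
The plan is to establish Theorem~\ref{thm:convexintegration} by the Baire category form of convex integration introduced by M\"uller and \v{S}ver\'ak; the following is an outline of that scheme, specialised to the present hypotheses. Since the in-approximation is uniformly bounded, fix $M>0$ with $\bigcup_i U_i\subset B_M(0)$. First I would introduce a complete metric space of \emph{subsolutions}: let $\Xcal$ be the $L^\infty(\Omega;\R^2)$-closure of the set of all finitely piecewise affine maps $v\in W^{1,\infty}(\Omega;\R^2)$ with $v=Fx$ on $\partial\Omega$, $\norm{\nabla v}_{L^\infty(\Omega)}\le M$, and $\nabla v\in\bigcup_i U_i$ almost everywhere. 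Because $F\in U_1$, the affine map $x\mapsto Fx$ lies in this set, so $\Xcal\neq\emptyset$; being a closed subset of a complete space, $\Xcal$ is itself complete, and each of its elements is $M$-Lipschitz and equal to $Fx$ on $\partial\Omega$.

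The technical core is a perturbation lemma: for a finitely piecewise affine $v\in\Xcal$ and any $\delta>0$ there is a finitely piecewise affine $w\in\Xcal$ with $\norm{v-w}_{L^\infty(\Omega)}<\delta$ such that on each affine piece $P$ with $\nabla v\equiv A\in U_i$ the gradient $\nabla w$ takes values in $U_{i+1}$ outside a subset of $P$ of measure less than $\delta\abs P$. The driving mechanism is precisely the in-approximation condition $U_i\subset(U_{i+1})^{\mathrm{rc}}$: it allows $A$ to be written as a finite-order laminate of matrices in $U_{i+1}$, and then one iterates the elementary rank-one oscillation (``sawtooth'') construction --- if $A=\mu B+(1-\mu)G$ with $\rank(B-G)=1$ and $\mu\in(0,1)$, then for any $\eta>0$ there is a finitely piecewise affine map on $P$ agreeing with $Ax$ on $\partial P$, within $\eta$ of $Ax$ in $L^\infty(P)$, with gradient in $\{B,G\}$ off a set of measure less than $\eta$ and bounded by $\max(\abs B,\abs G)$. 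Openness of the sets $U_j$ is used to keep $\nabla w$ in $\bigcup_j U_j$ on the thin transition layers, so that $w$ is again in $\Xcal$.

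Next I would run the Baire argument. The gradient map $\Xcal\ni v\mapsto\nabla v\in L^1(\Omega;\R^{2\times2})$ is continuous on the dense subset of finitely piecewise affine elements and is a pointwise limit of continuous maps on $\Xcal$, so its set $\Gcal$ of points of continuity is residual in $\Xcal$, hence nonempty by Baire's theorem. For $u\in\Gcal$ I would take finitely piecewise affine $v_k\to u$ in $L^\infty$; continuity of the gradient map at $u$ gives $\nabla v_k\to\nabla u$ in $L^1$, hence pointwise almost everywhere along a subsequence. Applying the perturbation lemma repeatedly and diagonalising produces subsolutions still converging to $u$ in $L^\infty$ whose gradients lie in $U_j$ with $j\to\infty$ on sets exhausting $\Omega$; fed into the convergence property of the in-approximation (``$F_i\in U_i$ for all $i$ and $F_i\to F$ imply $F\in\Mcal$''), this forces $\nabla u\in\Mcal$ almost everywhere in $\Omega$. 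Since $u=Fx$ on $\partial\Omega$ by membership in $\Xcal$, this $u$ is the sought-for map.

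I expect the main obstacle to be the perturbation step together with the bookkeeping it feeds into the Baire conclusion: carrying out the iterated rank-one laminations along the in-approximation while keeping both the $L^\infty$-norm and the affine boundary trace under control, and tracking the exceptional transition sets accurately enough that the final argument yields gradients genuinely in $\Mcal$ rather than merely in some hull of it.
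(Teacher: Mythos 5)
The paper does not prove this statement at all: Theorem~\ref{thm:convexintegration} is quoted verbatim from M\"uller--\v{S}ver\'ak \cite[Theorem~1.3]{MuS99} and used as a black box in Corollary~\ref{cor:convexintegration}, so there is no internal proof to compare against. Your Baire-category scheme (complete metric space of subsolutions, residuality of the continuity points of $v\mapsto\nabla v$, perturbation lemma driven by the in-approximation) is a recognized alternative to the original iterative construction of \cite{MuS99}, in the spirit of Dacorogna--Marcellini, Kirchheim and Sychev, and the overall architecture you describe is the right shape for such a proof.

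However, as written there is a genuine gap exactly where the theorem is hard. The sets $U_i$ are open only \emph{relative to} the manifold $\{F\in\R^{2\times 2}:\det F=1\}$; they have empty interior in $\R^{2\times 2}$. Your perturbation lemma rests on the ``elementary rank-one oscillation (sawtooth) construction,'' whose transition layers have gradients near, but not on, the segment $[B,G]$, hence generically with $\det\neq 1$. Such a perturbed map $w$ fails $\nabla w\in\bigcup_j U_j\subset\{\det=1\}$ a.e.\ and so leaves your space $\Xcal$, and ``openness of the sets $U_j$'' cannot repair this, because relative openness gives no room in directions transversal to the constraint surface. Producing an oscillating, piecewise affine (or Lipschitz) perturbation with affine boundary values whose gradient stays \emph{exactly} on $\{\det F=1\}$ is precisely the main technical contribution of ``convex integration with constraints'' in \cite{MuS99}, and it is absent from your outline; without it the whole Baire iteration has no admissible moves. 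Two smaller points should also be addressed: decomposing $A\in U_i$ as a finite-order laminate of matrices in $U_{i+1}$ uses that for (relatively) open sets the rank-one convex hull coincides with the lamination hull, which must be invoked rather than assumed; and the final step, where you claim the approximating gradients lie in $U_j$ with $j\to\infty$ on sets exhausting $\Omega$, needs the standard argument that at a continuity point of the gradient map the perturbation lemma would contradict continuity unless the index has already been pushed up a.e. --- as stated it is an assertion, not a proof.
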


Combining these two theorems with the explicit construction of Lemma~\ref{lem:construction_rankoneconnections} leads to the following result, cf.~also~\cite{CoT05, Con06, CDK11}.

\begin{corollary}\label{cor:convexintegration}
Let $\Omega\subset \R^2$ be a bounded domain and $N\in \Ncal_s$. If $N\in \Ncal_s\setminus \Mcal_s$, let $F, G\in \Mcal_s$ and $\mu\in (0,1)$ be as in Lemma~\ref{lem:construction_rankoneconnections}, otherwise let $F=G=N\in \Mcal_s$ and $\mu\in (0,1)$.

Then, for every $\delta>0$
there exists $u_\delta \in W^{1, \infty}(\Omega;\R^2)$ and $\Omega_\delta\subset \Omega$ with $\abs{\Omega\setminus \Omega_\delta}<\delta$ such that $u_\delta$ coincides with a simple laminate between $F$ and $G$ with weights $\mu$ and $1-\mu$ and period $h_\delta<\delta$ in $\Omega_\delta$, $\nabla u_\delta \in \Mcal_s$ a.e.~in $\Omega$, $u_\delta=Nx$ on $\partial \Omega$, and
\begin{align}\label{eq_convexintegration}
\abs{\nabla u_\delta m} < \abs{Nm} +\delta \qquad \text{a.e.~in $\Omega$.}
\end{align}
In particular, $\abs{\nabla u_\delta m}= \abs{Nm}$ a.e.~in $\Omega_\delta$, and $\nabla u_\delta \weakly N$ in $L^2(\Omega;\R^{2\times 2})$ as $\delta \to 0$.
\end{corollary}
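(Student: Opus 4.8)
The plan is to separate the two cases according to whether $N\in\Mcal_s$ or $N\in\Ncal_s\setminus\Mcal_s$, since they require genuinely different arguments, and then to glue together the simple-laminate description on $\Omega_\delta$ (coming from Theorem~\ref{theo: Conti Theil}) with an exact solution of the differential inclusion near $\partial\Omega$ (coming from Theorem~\ref{thm:convexintegration}). First I would dispose of the trivial case $N=F=G\in\Mcal_s$: then simply take $u_\delta(x)=Nx$, set $\Omega_\delta=\Omega$, and all claims hold with equality; in particular~\eqref{eq_convexintegration} is trivial and the final weak-convergence statement is constant in $\delta$. So from now on assume $N\in\Ncal_s\setminus\Mcal_s$, and fix $F,G\in\Mcal_s$ and $\mu\in(0,1)$ as produced by Lemma~\ref{lem:construction_rankoneconnections}, so that $N=\mu F+(1-\mu)G$, $\rank(F-G)=1$, and crucially $\abs{Fm}=\abs{Gm}=\abs{Nm}$. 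Note that $Fs\neq Gs$: indeed if $Fs=Gs$ then by the rank-one structure $F-G=a\otimes m$ with $a\perp s$... actually more directly, the rank-one connection in case~$i)$ of Lemma~\ref{lem:rankoneMSO2} has $F-G=(\gamma-\zeta)Rs\otimes m$ so $Fs=Gs$, but in our construction $\gamma=-\zeta\neq0$ and $R\neq Q$ (case~$ii$, since $\theta=2\arctan\gamma\neq0$), so we are in case~$ii)$ and $Fs\neq Gs$ holds; this is exactly the hypothesis needed for Theorem~\ref{theo: Conti Theil}.

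Next I would apply Theorem~\ref{theo: Conti Theil} with this $F,G,\mu,N$ and the given $\delta>0$ (shrinking $\delta$ if necessary so that the various $\delta$'s below match): this yields $h_\delta^0>0$ and $\Omega_\delta\subset\Omega$ with $\abs{\Omega\setminus\Omega_\delta}<\delta/2$, and, for any chosen period $h_\delta<\min\{h_\delta^0,\delta\}$, a finitely piecewise affine $v_\delta:\Omega\to\R^2$ with $\nabla v_\delta\in\Ncal_s$ a.e., $v_\delta=Nx$ on $\partial\Omega$, $\dist(\nabla v_\delta,[F,G])<\delta'$ a.e., and $\nabla v_\delta$ equal to the simple laminate between $F$ and $G$ of weights $\mu,1-\mu$ and period $h_\delta$ on $\Omega_\delta$. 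The function $v_\delta$ already does almost everything except that on $\Omega\setminus\Omega_\delta$ its gradient lies only in $\Ncal_s$, not in $\Mcal_s$. To fix this, I would invoke Theorem~\ref{thm:convexintegration}: on each of the finitely many affine pieces $\omega_k$ of $v_\delta$ that meet $\Omega\setminus\Omega_\delta$, the constant gradient value $N_k:=\nabla v_\delta|_{\omega_k}$ lies in $\Ncal_s=\Mcal_s^{\rm rc}$, and using the in-approximation of $\Mcal_s$ constructed in~\cite{MuS99, CoT05} (whose first members can be arranged to contain any prescribed $N_k\in\Ncal_s$ and to lie in an arbitrarily small neighborhood of $\Ncal_s$, in particular within $\{\,\abs{Fm}<\abs{N_k m}+\delta\,\}$) one obtains $w_k\in W^{1,\infty}(\omega_k;\R^2)$ with $\nabla w_k\in\Mcal_s$ a.e., $w_k=N_kx$ on $\partial\omega_k$, and $\abs{\nabla w_k m}<\abs{N_k m}+\delta$ a.e. Replacing $v_\delta$ by $w_k$ on each such $\omega_k$ (and leaving it unchanged on the pieces entirely contained in $\Omega_\delta$, where the gradient is already $F$ or $G\in\Mcal_s$) produces $u_\delta\in W^{1,\infty}(\Omega;\R^2)$ with matching boundary values on the interfaces, hence $u_\delta=Nx$ on $\partial\Omega$, and $\nabla u_\delta\in\Mcal_s$ a.e.

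It then remains to verify the two quantitative claims. For~\eqref{eq_convexintegration}: on $\Omega_\delta$ one has $\nabla u_\delta\in\{F,G\}$, so $\abs{\nabla u_\delta m}=\abs{Fm}=\abs{Gm}=\abs{Nm}$ exactly, giving the ``in particular'' assertion; on $\Omega\setminus\Omega_\delta$, on each $\omega_k$ we have $\abs{\nabla u_\delta m}<\abs{N_k m}+\delta$ and $\abs{N_k m}\le\abs{Nm}+C\delta'$ because $\dist(N_k,[F,G])<\delta'$ and $\abs{\cdot m}$ is $1$-Lipschitz while $\abs{Fm}=\abs{Gm}=\abs{Nm}$ forces $\sup_{[F,G]}\abs{\cdot m}\le\abs{Nm}$ — wait, one must be slightly careful: $\abs{\cdot m}$ need not be linear on $[F,G]$, but it is convex, hence its maximum over the segment is attained at an endpoint and equals $\abs{Nm}$; so $\abs{N_k m}<\abs{Nm}+\delta'$, and choosing $\delta'$ small relative to $\delta$ yields $\abs{\nabla u_\delta m}<\abs{Nm}+\delta$ a.e.\ on $\Omega\setminus\Omega_\delta$ as well. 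For the final weak-convergence statement: $(\nabla u_\delta)_\delta$ is bounded in $L^\infty(\Omega;\R^{2\times2})$ (all gradients lie in the compact set $\{\abs{Fm}\le\abs{Nm}+1,\ \abs{Fs}\le1\}$ when $\delta\le1$), and for any affine test function we can compute $\int_\Omega\nabla u_\delta\,\varphi$; the cleanest route is to note that $\abs{\{\nabla u_\delta\neq\nabla v_\delta\}}<\delta/2\to0$ while $v_\delta\weakly N\cdot$ because on $\Omega_\delta$ the simple laminate with period $h_\delta\to0$ has gradient averaging weakly to $\mu F+(1-\mu)G=N$ and $\abs{\Omega\setminus\Omega_\delta}\to0$ with bounded gradients; hence $\nabla u_\delta\weakly N$ in $L^2(\Omega;\R^{2\times2})$. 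The main obstacle is the bookkeeping in the convex-integration step — namely checking that the in-approximation of $\Mcal_s$ from~\cite{MuS99} can be centered so that its first set both contains the given $N_k\in\Ncal_s$ and stays in the prescribed $\delta$-neighborhood, so that the exact solution produced does not violate~\eqref{eq_convexintegration}; this is exactly the point handled in~\cite{CoT05, Con06}, and I would cite those constructions rather than reproduce them.
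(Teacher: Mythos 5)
Your argument is correct and follows essentially the same route as the paper's proof: Theorem~\ref{theo: Conti Theil} supplies the laminate with boundary datum $Nx$ and gradients in $\Ncal_s$, and M\"uller--\v{S}ver\'ak convex integration (Theorem~\ref{thm:convexintegration}) with an in-approximation of $\Mcal_s$ intersected with $\{\abs{Fm}<\abs{Nm}+\delta\}$ repairs the finitely many affine pieces, the bound~\eqref{eq_convexintegration} coming from the distance estimate combined with $\abs{Fm}=\abs{Gm}=\abs{Nm}$. Only balance the tolerances (e.g.\ run Conti--Theil and the convex-integration step with $\delta/2$ each, or use the uniform threshold $\abs{Nm}+\delta$ in the in-approximation as the paper does), since your piecewise bound $\abs{\nabla u_\delta m}<\abs{N_k m}+\delta<\abs{Nm}+\delta'+\delta$ as written slightly overshoots the claimed $\abs{Nm}+\delta$.
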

\begin{proof}
From Theorem \ref{theo: Conti Theil} we obtain for $\delta>0$ the desired set $\Omega_\delta$ along with a finitely piecewise affine function $v_\delta: \Omega \rightarrow \R^2$ that coincides in $\Omega_\delta$ with a simple laminate between the gradients $F$ and $G$ of period $h_\delta<\min\{\delta, h_\delta^0\}$, satisfies $\nabla v_\delta \in \Ncal_s$ a.e.~in $\Omega$, and $v_\delta = Nx$ on $\partial \Omega$. In view of~\eqref{eq19}, $\abs{\nabla v_\delta m}=\abs{Nm}$ a.e.~in $\Omega_\delta$ and 
\begin{align*}
\delta > \dist\big(\nabla v_\delta, [F, G]\big) \geq \min_{t \in [0,1]} \big|\nabla v_\delta m - (t Fm + (1-t)Gm)\big| \geq  |\nabla v_\delta m| - |Nm|
\end{align*}
a.e.~in $\Omega$. Finally, the sought function $u_\delta$ results from a modification of $v_\delta$ in the (finitely many) domains where $\nabla v_\delta\notin\Mcal_s$ by applying Theorem~\ref{thm:convexintegration} with the in-approximation $(U_i^\delta)_i$ of $\Mcal_s\cap \{F\in \R^{2\times 2}: \abs{Fm}< \abs{Nm}+\delta\}$ given by
\begin{align*}
U_i^\delta = \big\{ F \in \R^{2\times2}: \det F = 1, \, 1-2^{-(i-1)}<\abs{Fs} <1,\, \abs{Fm} < \abs{Nm} + \delta \big\}, \quad i\in \N,
\end{align*}
see~\cite[Proof of Lemma~2]{CoT05} for more details.
\end{proof}


\section{Proof of Theorem~\ref{theo:Gammalimit} for $s = e_1$}\label{sec:Gamma_e1}
As indicated in the introduction, in the special case of a horizontal slip direction $s=e_1$, we can prove Theorem~\ref{theo:Gammalimit} in a slightly more general setting, where $W_{\rm soft}$ has an additional linear term that can be interpreted as a dissipative energy contribution. 

More precisely, for a given $\tau\geq 0$ let us replace $W_{\rm soft}$ with
\begin{align*}
W_{\rm soft}^\tau(F) = \begin{cases}\gamma^2 +\tau \abs{\gamma} & \text{if $F=R(\Ibb+\gamma s\otimes m)$, $R\in SO(2)$, $\gamma\in \R$,} \\ \infty& \text{otherwise,} \end{cases}\qquad F\in \R^{2\times 2}.
\end{align*}
Then, $E_\eps$ of~\eqref{def:Eeps} with $\eps>0$ turns into
\begin{align}\label{Eepskappa_representation2}
E_\eps^\tau(u)= \begin{cases}\displaystyle \int_\Omega \gamma^2 +\tau \abs{\gamma} \dd{x} & \text{if $u\in W^{1,2}(\Omega;\R^2)$, $\nabla u=R(\Ibb+\gamma s\otimes m)$} \\ & \text{with $R\in L^\infty(\Omega;SO(2))$, $\gamma \in L^2(\Omega)$, $\gamma=0$ a.e.~in $\eps \Yrig\cap \Omega$,}\\ \infty & \text{otherwise,}\end{cases}
\end{align}
for $u\in L^2_0(\Omega;\R^2)$, cf.~\eqref{Eeps_representation2}.

In this section, we prove the following generalization of Theorem~\ref{theo:Gammalimit} in the case $s=e_1$, assuming that $\Omega$ is simply connected.
\begin{theorem}\label{theo:Gammalimit_e1}
Let $(E_\eps^\tau)_\eps$ as in~\eqref{Eepskappa_representation2} and let the functional $E^\tau: L_0^2(\Omega;\R^2)\to [0, \infty]$ be given by
\begin{align*}
E^\tau (u)=\begin{cases}\displaystyle \frac{1}{\lambda} \int_\Omega \gamma^2 \dd{x} + \tau \int_{\Omega}\abs{\gamma}\dd{x} & \text{if $u\in W^{1,2}(\Omega;\R^2)$, 
$\nabla u=R(\Ibb +\gamma e_1\otimes e_2)$}  \text{~with} \\ &\quad \quad  R\in SO(2), ~\gamma\in L^2(\Omega),  \\ \infty & \text{otherwise.} \end{cases}
\end{align*}
Then, $\Gamma(L^2)\text{-}\lim_{\eps\to 0}E_\eps^\tau =E^\tau$.
Moreover, bounded energy sequences of $(E_\eps^\tau)_\eps$ are relatively compact in $L^2_0(\Omega;\R^2)$.
\end{theorem}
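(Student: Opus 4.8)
The strategy is the standard three-part $\Gamma$-convergence argument (compactness, liminf-inequality, recovery sequence), simplified substantially by the fact that $s=e_1$ so that $m=e_2$ and \emph{no microstructure formation is required} in the recovery construction. Throughout, $\Omega$ is simply connected, which will be used to pass from closed gradient fields to potentials.

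\textbf{Step 1: Compactness and lower bound for the limit structure.}
Let $(u_j)_j$ with $E^\tau_{\eps_j}(u_j)\le C$. From \eqref{Eepskappa_representation2}, $\nabla u_j = R_j(\Ibb+\gamma_j e_1\otimes e_2)$ with $R_j\in SO(2)$, $\gamma_j\in L^2(\Omega)$, and $\int_\Omega \gamma_j^2\,\di x \le C$. Since $|\nabla u_j|^2 = 2 + \gamma_j^2$ is bounded in $L^1$ — in fact uniformly bounded since the rotation part contributes boundedly and $\gamma_j$ is bounded in $L^2$ — the sequence $(u_j)_j$ is bounded in $W^{1,2}(\Omega;\R^2)$ (using the mean-zero normalization and Poincar\'e). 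Hence a subsequence satisfies $u_j\weakly u$ in $W^{1,2}$ and $u_j\to u$ in $L^2$. Now invoke Proposition~\ref{prop:rigidity} (whose hypotheses hold by Remark~\ref{rem:rigidity}\,a)): there exist $R\in SO(2)$ and $\gamma\in L^2(\Omega)$ with $\nabla u = R(\Ibb+\gamma e_1\otimes e_2)$, so $u$ is admissible for $E^\tau$. This gives compactness, and incidentally identifies the correct target class.

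\textbf{Step 2: Liminf-inequality.}
Assume $u_j\to u$ in $L^2$; may assume $\liminf E^\tau_{\eps_j}(u_j)<\infty$ and (passing to a subsequence) $\sup_j E^\tau_{\eps_j}(u_j)<\infty$, so Step~1 applies and $u$ has the admissible form with some $\gamma\in L^2(\Omega)$. Split $E^\tau_{\eps_j}(u_j) = \int_\Omega \gamma_j^2\,\di x + \tau\int_\Omega|\gamma_j|\,\di x$. For the quadratic term, Corollary~\ref{cor: estimate lower bound} with $m=e_2$, $s=e_1$ gives (after reducing to piecewise-affine $u$ by a standard approximation/localization, or arguing directly since for $s=e_1$ one has $Rm=Re_2=\nabla u e_2 - \gamma Re_1$, making the right-hand side simplify) the bound
\begin{align*}
\liminf_{j\to\infty}\int_\Omega \gamma_j^2\,\di x = \liminf_{j\to\infty}\int_\Omega |\nabla u_j m|^2 - 1\,\di x \ge \frac{1}{\lambda}\int_\Omega \gamma^2\,\di x,
\end{align*}
since for $s=e_1$ one checks $|\nabla u\, m - (1-\lambda)Rm|^2 = |\gamma|^2$ (because $\nabla u\, m = Re_2 + \gamma Re_1$ and $Rm = Re_2$, so the difference is $\gamma Re_1 + \lambda Re_2$ — wait, one must be careful here; more robustly, use \eqref{eq24}: $\nabla u_j\mathbbm{1}_{\eps_j\Ysoft}\weakly \nabla u - (1-\lambda)R = R(\lambda\Ibb + \gamma e_1\otimes e_2)$, apply it to $e_2$ to get $\gamma_j Re_1\mathbbm{1}_{\eps_j\Ysoft}\weakly \lambda\gamma Re_1$, hence $\gamma_j\mathbbm{1}_{\eps_j\Ysoft}\weakly \lambda\gamma$ in $L^2(\Omega)$, and lower semicontinuity of $v\mapsto \int |v|^2$ combined with $|\Omega\cap\eps_j\Ysoft|\to\lambda|\Omega|$ via Jensen/Hölder as in the proof of Corollary~\ref{cor: estimate lower bound} yields $\liminf\int\gamma_j^2 \ge \frac1\lambda\int\gamma^2$). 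For the linear term, $\gamma_j\mathbbm{1}_{\eps_j\Ysoft}\weakly \lambda\gamma$ in $L^1(\Omega)$ gives, by weak lower semicontinuity of the $L^1$-norm, $\liminf \int_\Omega|\gamma_j|\,\di x = \liminf \int_\Omega |\gamma_j\mathbbm{1}_{\eps_j\Ysoft}|\,\di x \ge \int_\Omega|\lambda\gamma|\,\di x = \lambda\int_\Omega|\gamma|\,\di x$. Adding, $\liminf E^\tau_{\eps_j}(u_j) \ge \frac1\lambda\int\gamma^2 + \tau\lambda\int|\gamma|$; this is \emph{not quite} $E^\tau(u) = \frac1\lambda\int\gamma^2 + \tau\int|\gamma|$. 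The discrepancy in the linear term ($\lambda\tau$ versus $\tau$) shows the naive lower-semicontinuity bound is lossy; one must instead track the linear term on the soft part against the recovery and note that the recovery sequence achieves exactly $\tau\int|\gamma|$, so the sharp liminf must use that on the soft layers $|\gamma_j| = |\nabla u_j m|\cdot(\text{something})$ — in fact the correct accounting is $\int_\Omega|\gamma_j|\,\di x = \int_{\eps_j\Ysoft}|\gamma_j|\,\di x$ and, since after passing to the limit the optimal $\gamma_j$ concentrates as $\frac{\gamma}{\lambda}$ on the soft layers, $\int_{\eps_j\Ysoft}|\gamma_j| \to \int_\Omega \frac{|\gamma|}{\lambda}\cdot\lambda = \int_\Omega|\gamma|$; the liminf inequality $\liminf\int|\gamma_j| \ge \int|\gamma|$ then follows from the convexity inequality $\int_{\eps_j\Ysoft}|\gamma_j| \ge |\Omega\cap\eps_j\Ysoft|\,\big|\dashint_{\eps_j\Ysoft}\gamma_j\big|$ applied locally on a fine partition of $\Omega$ into squares, combined with $\dashint_{\eps_j\Ysoft\cap\square}\gamma_j \to \frac1\lambda\dashint_\square\gamma\cdot$(appropriate factor). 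This localized Jensen argument, carried out simultaneously for the quadratic and linear terms exactly as in Corollary~\ref{cor: estimate lower bound} but on a mesh of cubes rather than on all of $\Omega$, is the technically delicate point and the \textbf{main obstacle}.

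\textbf{Step 3: Recovery sequences.}
Given admissible $u$ with $\nabla u = R(\Ibb+\gamma e_1\otimes e_2)$, $R\in SO(2)$ constant, $\gamma\in L^2(\Omega)$, note $\partial_1\gamma = 0$ (Remark~\ref{rem:rigidity}\,b)). First approximate $\gamma$ in $L^2$ by functions depending only on $x_2$ that are smooth (or piecewise constant); it suffices to construct recovery sequences for such $\gamma$ and then diagonalize. For $\gamma = \gamma(x_2)$, define $\gamma_\eps$ by setting $\gamma_\eps = 0$ on $\eps\Yrig\cap\Omega$ and $\gamma_\eps = \frac{1}{\lambda}\gamma$ on $\eps\Ysoft\cap\Omega$ — this ``glides more in the soft layers by the factor $1/\lambda$''. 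Since $s=e_1$, $m=e_2$ and the matrix field $F_\eps := R(\Ibb+\gamma_\eps e_1\otimes e_2)$ satisfies $\curl F_\eps = -\partial_1\gamma_\eps\,Re_1 = 0$ (as $\gamma_\eps$ depends only on $x_2$), so on the simply connected $\Omega$ there is $u_\eps\in W^{1,2}(\Omega;\R^2)$ with $\nabla u_\eps = F_\eps$; normalize to mean zero. Then $\nabla u_\eps \weakly R(\Ibb + \lambda\cdot\frac1\lambda\gamma\, e_1\otimes e_2) = \nabla u$ since $\frac1\lambda\gamma\mathbbm{1}_{\eps\Ysoft}\weakly \gamma$, and compactness (Step 1 applied to $(u_\eps)$, or a direct $1$-d argument exploiting $\gamma=\gamma(x_2)$) upgrades this to $u_\eps\to u$ in $L^2$. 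Finally compute $E^\tau_\eps(u_\eps) = \int_{\eps\Ysoft\cap\Omega}\big(\frac{1}{\lambda^2}\gamma^2 + \frac{\tau}{\lambda}|\gamma|\big)\,\di x \to \lambda\int_\Omega\big(\frac{1}{\lambda^2}\gamma^2 + \frac{\tau}{\lambda}|\gamma|\big)\,\di x = \frac1\lambda\int_\Omega\gamma^2\,\di x + \tau\int_\Omega|\gamma|\,\di x = E^\tau(u)$, using that $\gamma^2$ and $|\gamma|$ are fixed $L^1$ functions and $\mathbbm{1}_{\eps\Ysoft}\weaklystar\lambda$. Combining with the liminf inequality of Step~2 (which, done sharply, matches this value) completes the proof; compactness was already established in Step~1.
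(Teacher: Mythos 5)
There is a genuine gap, and it sits exactly where you flag your ``main obstacle'': the liminf inequality for the linear term. Its source is an incorrect identification of a weak limit. You claim that applying \eqref{eq24} to $e_2$ gives $\gamma_j\mathbbm{1}_{\eps_j\Ysoft}\weakly \lambda\gamma$ in $L^2(\Omega)$. This is false: \eqref{eq24} applied to $e_2$ gives $(\nabla u_je_2)\mathbbm{1}_{\eps_j\Ysoft}=R_je_2\mathbbm{1}_{\eps_j\Ysoft}+\gamma_jR_je_1\mathbbm{1}_{\eps_j\Ysoft}\weakly \lambda Re_2+\gamma Re_1$, and to split off the $\gamma_j$-part you must know the limit of $R_je_2\mathbbm{1}_{\eps_j\Ysoft}$, i.e.\ you need information on the oscillating rotations $R_j$ (your formula silently replaces $R_j$ by $R$). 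The key fact, special to $s=e_1$ and used in the paper, is that $R_je_1=\nabla u_je_1\weakly Re_1$ together with $\abs{R_je_1}=\abs{Re_1}=1$ forces \emph{strong} convergence $R_j\to R$ in $L^2$; this yields $\gamma_j\weakly\gamma$ in $L^2(\Omega)$ (note $\gamma_j=\gamma_j\mathbbm{1}_{\eps_j\Ysoft}$, so the correct weak limit is $\gamma$, not $\lambda\gamma$ -- consistent with the recovery sequence $\tfrac{\gamma}{\lambda}\mathbbm{1}_{\eps\Ysoft}\weakly\gamma$). With this, the linear term is immediate: $\liminf_j\int_\Omega\abs{\gamma_j}\dd x\geq\int_\Omega\abs{\gamma}\dd x$ by weak lower semicontinuity of the $L^1$-norm, and there is no ``$\lambda\tau$ versus $\tau$'' discrepancy to repair. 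Your attempted repair is, moreover, not a proof: asserting that ``the optimal $\gamma_j$ concentrates as $\gamma/\lambda$ on the soft layers'' assumes the competitor sequence behaves like the recovery sequence, which is circular in a liminf argument, and even your localized Jensen scheme would, with your (wrong) limit $\lambda\gamma$, still only produce $\lambda\int\abs{\gamma}$.

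A secondary gap: for the quadratic term you invoke Corollary~\ref{cor: estimate lower bound} ``after reducing to piecewise-affine $u$ by a standard approximation/localization,'' but this reduction is not standard here because of the differential constraints; the paper handles it by building explicit comparison sequences $z_{k,j}=u_j-v_j+v_{k,j}$ from recovery-type constructions for piecewise constant approximations $\zeta_k$ of $\gamma$, with the error controlled by $\norm{\gamma-\zeta_k}_{L^2(\Omega)}$, and then letting $k\to\infty$. Your compactness step and your recovery construction (gliding by $1/\lambda$ in the soft layers, curl-freeness from $\partial_1\gamma=0$, simply connectedness to obtain a potential, energy convergence via $\mathbbm{1}_{\eps\Ysoft}\weaklystar\lambda$) do match the paper, up to the minor caveat that $\partial_1\gamma=0$ only makes $\gamma$ locally a function of $x_2$ on a general simply connected domain; but without the strong convergence of the rotations and a worked-out approximation argument for general $\gamma$, the lower bound -- and hence the theorem -- is not established by your proposal.
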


\begin{proof} The proof is divided into three steps.

\textit{Step~1: Compactness.} Let $(\eps_j)_j$ with $\eps_j\to 0$ as $j\to \infty$, and consider $(u_j)_j$ such that $E_{\epsilon_j}^\tau(u_j) < C$ 
for all $j\in \N$. Then, 
$\nabla u_j=R_j(\Ibb+\gamma_j e_1\otimes e_2)$ with $R_j \in L^\infty(\Omega;SO(2))$ and $\gamma_j\in L^2(\Omega)$. 
Since $\abs{\nabla u_j e_1} = 1$ a.e.~in $\Omega$ and $\norm{\nabla u_j e_2}^2_{L^2(\Omega;\R^2)}=\norm{\gamma_j}_{L^2(\Omega)}^2 +\abs{\Omega}$, the observation that $(\gamma_j)_j$ is uniformly bounded in $L^2(\Omega)$ results in
\begin{align}\label{compactnessC}
\norm{\nabla u_j}_{L^2(\Omega;\R^{2\times 2})}\leq C\qquad\text{for all $j\in \N$.}
\end{align}
By Poincar\'{e}'s inequality (recall that $\int_{\Omega} u_j\dd{x}=0$) the sequence $(u_j)_j$ is uniformly bounded in $W^{1,2}(\Omega;\R^2)$. Hence, one may extract a subsequence (not relabeled) of $(u_{j})_j$ that converges weakly in $W^{1, 2}(\Omega;\R^2)$, and also strongly in $L^2(\Omega;\R^2)$ by compact embedding, to a limit function $u\in W^{1,2}(\Omega;\R^2)$. Considering Remark~\ref{rem:rigidity}\,a), we infer from Lemma~\ref{lem:rigidity} that
$\nabla u = R(\Ibb + \gamma e_1 \otimes e_2),$
where $R \in SO(2)$ and $\gamma\in L^2(\Omega)$. 

Moreover, the case $s=e_1$ at hand carries even more information. Indeed, $R_{j} e_1=\nabla u_{j} e_1 \rightharpoonup \nabla ue_1=Re_1$ in $L^2(\Omega;\R^2)$
along with $\abs{R_{j} e_1} =\abs{Re_1} = 1$ entails strong convergence of the rotations $(R_j)_j$, i.e.~(possibly after selection of another subsequence) $R_{j} \rightarrow R$ in $L^2(\Omega;\R^{2\times 2})$, and thus, also 
\begin{align}\label{weakconvergence_gammaeps}
\gamma_{j} \rightharpoonup \gamma\quad \text{in $L^2(\Omega)$. }
\end{align}

\textit{Step~2: Recovery sequence.} Let $u \in W^{1,2}(\Omega; \R^2)\cap L^2_0(\Omega;\R^2)$ with 
\begin{align*} 
\nabla u = R(\mathbb{I} + \gamma e_1\otimes e_2)\quad\text{with $R\in SO(2)$, $\gamma \in L^2(\Omega).$} 
\end{align*}
The main idea for the construction of a recovery sequence is to set $\gamma=0$ in the stiff layers, as the functional $E_\epsilon^\tau$ requires, while compensating with more gliding in the softer layers.

Therefore, for $\eps>0$ we put
\begin{align*}
\gamma_\epsilon = \frac{\gamma}{\lambda} \mathbbm{1}_{\eps\Ysoft\cap \Omega}\in L^2(\Omega).
\end{align*}
Let us assume for the moment that the function $R(\mathbb{I} + \gamma_\epsilon e_1\otimes e_2)\in L^2(\Omega;\R^{2\times 2})$ has a potential, meaning that there exists $u_\epsilon\in W^{1,2}(\Omega;\R^2)$ with
\begin{align}\label{eq23}
\nabla u_\eps = R(\mathbb{I} + \gamma_\epsilon e_1\otimes e_2),
\end{align} 
without loss of generality, we can take $u_\eps\in L_0^2(\Omega;\R^2)$.
By the weak convergence of oscillating periodic functions one has that $\mathbbm{1}_{\eps\Ysoft \cap \Omega} \weaklystar \lambda$ in $L^\infty(\Omega)$, which implies
\begin{align*}
\gamma_\eps \weakly \gamma \quad \text{in $L^2(\Omega)$.}
\end{align*}
Consequently, it follows in view of Poincar\'e's inequality that $u_\epsilon \rightharpoonup u$ in $W^{1,2}(\Omega;\R^2)$, and by compact embedding $u_\epsilon \rightarrow u$ in $L^2(\Omega;\R^2)$. 
Regarding the convergence of energies we argue that
\begin{align*}
\lim_{\eps\to 0}E^\tau_\eps(u_\eps) &=\lim_{\eps\to 0} \int_\Omega \gamma_\eps^2+\tau \abs{\gamma_\eps}\dd{x} = \lim_{\eps\to 0}\int_\Omega \Bigl(\frac{\gamma^2}{\lambda^2}+\frac{\tau}{\lambda}\abs{\gamma} \Bigr)\mathbbm{1}_{\eps\Ysoft\cap \Omega}\dd{x} \\ &= \int_{\Omega} \frac{\gamma^2}{\lambda} + \tau \abs{\gamma} \dd{x} =E^\tau(u). 
\end{align*}

It remains to prove the existence of $u_\eps \in W^{1,2}(\Omega;\R^2)$ such that~\eqref{eq23} holds. 
If $\Omega$ is a cube $Q\subset \R^2$ with sides parallel to the coordinate axes, say $Q=(0, l)^2$ with $l>0$, then 
$\gamma_\eps$ is independent of $x_1$ in view of Remark~\ref{rem:rigidity}\,b) and the orientation of the layers, hence, depending on the context, it can be interpreted as an element in  $L^2(Q)$ or $L^2(0, l)$. Then, for any $a\in \R^2$,
\begin{align}\label{eq40}
u_\eps(x) = Rx+ \Bigl(\int_0^{x_2} \gamma_\eps(t)\dd{t}\Bigr) Re_1 +a, \qquad x\in Q,
\end{align}
satisfies $u_\eps\in W^{1,2}(Q;\R^2)$ with $\nabla u_\eps=R(\Ibb+\gamma_\eps e_1\otimes e_2)$.
To construct $u_\eps$ 
for a general $\Omega$,
we exhaust $\Omega$ successively with shifted, overlapping cubes $Q$, using~\eqref{eq40} with suitably adjusted translation vectors $a$.

\textit{Step~3: Lower bound.} Let $(\eps_j)_j$ with $\eps_j\to 0$ as $j\to \infty$, and $u_j \to u$ in $L^2(\Omega;\R^2)$. We assume without loss of generality that $(u_{j})_j$ is a sequence of uniformly bounded energy for $(E_{\eps_j}^\tau)_j$, so that $(u_{j})_j$ and $u$ satisfy the properties of Step~1.

If $u$ is piecewise affine, the desired liminf inequality then follows directly from Corollary~\ref{cor: estimate lower bound}, as $\frac{1}{\lambda}\abs{\nabla ue_2-(1-\lambda)Re_2}^2-\lambda=\frac{1}{\lambda}\abs{\lambda Re_2+\gamma Re_1}^2-\lambda=\frac{\gamma^2}{\lambda}$, and  from~\eqref{weakconvergence_gammaeps}. 

To prove the statement for general $u$, we perform an approximation argument inspired by the proof of M\"uller's homogenization result in~\cite[Theorem~1.3]{Mul87}. Due to the differential constraints in $E_{\eps}^\tau$, however, the construction of suitable comparison functions is slightly more involved.

Suppose that $\Omega = Q\subset \R^2$ is a cube with sides parallel to the coordinate axes, otherwise we perform the arguments below on any finite union of disjoint cubes contained in $\Omega$ and take the supremum over all these sets, exploiting the fact that the energy density in~\eqref{Eepskappa_representation2} is non-negative.

Accounting for Remark~\ref{rem:rigidity}\,b) allows us to find a sequence of one-dimensional simple functions $(\zeta_k)_k$ (identified with a sequence in $L^2(Q)$ by constant extension in $x_1$-direction) such that
\begin{align}\label{app3}
\zeta_k\to \gamma \qquad \text{in $L^2(Q)$.}
\end{align} 

For $k\in \N$, let $w_k \in W^{1,2}(\Omega;\R^2)\cap L^2_0(\Omega;\R^2)$ with $\nabla w_k = R(\Ibb + \zeta_k e_1 \otimes e_2)$. 
Further, let $(v_j)_j$ and $(v_{k, j})_j$ be the recovery sequences (as constructed in Step~2) for $u$ and $w_k$, respectively.
We define 
\begin{align}\label{representationzkj}
z_{k,j} = u_j - v_j + v_{k,j}, \qquad j,k\in \N,
\end{align}
observing that $z_{k,j} \weakly w_k$ in $W^{1, 2}(\Omega;\R^{2})$ for all $k\in \N$ as $j\to \infty$. 
Moreover, $\nabla z_{k,j}= \nabla u_j$ a.e.~in $\epsilon_j \Yrig \cap \Omega$, so that in particular, $|\nabla z_{k,j} e_2|= 1$ a.e.~in $\epsilon_j\Yrig \cap \Omega$ for all $j,k\in \N$, and
\begin{align*}
\nabla z_{k,j}\mathbbm{1}_{\epsilon_j \Yrig\cap \Omega} \weakly (1-\lambda)R \quad \text{in $L^2(\Omega;\R^{2\times2})$} 
\end{align*}
for $k\in \N$ as $j\to \infty$ by~\eqref{add-on}.
Considering that $w_k$ is piecewise affine, it follows as in the proof of Corollary~\ref{cor: estimate lower bound}  that
\begin{align}\label{est31}
 \liminf_{j\to \infty} \int_{\Omega} |\nabla z_{k,j} e_2|^2 - 1 \dd x \geq \frac{1}{\lambda} \int_{\Omega} \zeta_k^2 \dd x
\end{align}
for all $k\in \N$.
With~\eqref{representationzkj} one obtains
\begin{align*}
E^\tau_{\eps_j}(u_j) & =  \int_{\Omega}\abs{\nabla u_j e_2}^2 -1 +\tau \abs{\gamma_j}\dd{x} \\ &
\geq   \int_{\Omega}\abs{\nabla z_{k,j} e_2}^2 -1 \dd{x} +\tau \int_\Omega \abs{\gamma_j}\dd{x}  
- 2 \norm{\nabla u_j}_{L^2(\Omega;\R^{2\times 2})} \norm{\nabla v_j e_2 - \nabla v_{k,j}e_2}_{L^2(\Omega;\R^2)} 
\end{align*}
for $j, k\in \N$, where by construction $\nabla v_je_2-\nabla v_{k,j}e_2 = \lambda^{-1}(\gamma-\zeta_k)Re_1\mathbbm{1}_{\eps_j\Ysoft\cap \Omega}$, cf.~Step~2. 
From $\norm{\nabla v_j e_2 - \nabla v_{k,j}e_2}_{L^2(\Omega;\R^2)} \leq \frac{1}{\lambda} \norm{\gamma-\zeta_k}_{L^2(\Omega)}$ and the uniform boundedness of $(\nabla u_j)_j$ in $L^2(\Omega;\R^{2\times 2})$ by~\eqref{compactnessC} we infer that
\begin{align*}
E^\tau_{\eps_j}(u_j) \geq  \int_{\Omega}\abs{\nabla z_{k,j} e_2}^2 -1 \dd{x} +\tau \int_\Omega \abs{\gamma_j}\dd{x} 
- \frac{2 C}{\lambda}  \norm{\gamma-\zeta_k}_{L^2(\Omega)} 
\end{align*} 
for $j, k\in \N$.
Passing to the limit $j\to \infty$ in the above estimate yields
\begin{align*}
\liminf_{j\to \infty} E^\tau_{\eps_j}(u_j) \geq \frac{1}{\lambda}\int_{\Omega} \zeta_k^2\dd{x} + \tau \int_{\Omega}\abs{\gamma}\dd{x} - \frac{2C}{\lambda} \norm{\gamma-\zeta_k}_{L^2(\Omega)}.
\end{align*}
Here we have used~\eqref{est31}, as well as~\eqref{weakconvergence_gammaeps}. 
Finally, due to~\eqref{app3}, taking $k\to \infty$ finishes the proof of the liminf inequality.
\end{proof}

\section{Proof of Theorem~\ref{theo:Gammalimit} for $s \neq e_1$}\label{sec:proof_sneqe1}
This section is concerned with the proof of Theorem~\ref{theo:Gammalimit} in the case of an inclined or vertical slip direction, that is $s\neq e_1$.
Due to the strong restrictions on rank-one connections between $SO(2)$ and $\Mcal_{s}$ with normal $e_2$ (see Remark \ref{rem:rank1_connections}), 
the construction of recovery sequences is more involved than for $s=e_1$.

Before giving the detailed arguments, let us briefly discuss different equivalent representations of the limit energy $E$ introduced in~\eqref{def:GammalimitI}. Depending on the context, we will always use the most convenient one without further mentioning. 
For $u\in W^{1,2}(\Omega;\R^2)$ such that $\nabla u = R(\Ibb + \gamma e_1 \otimes e_2)$ with $R\in SO(2)$ and $\gamma\in L^2(\Omega)$ we define with regard to Lemma~\ref{lem: specific laminates in N} the function $N\in L^2(\Omega;\R^{2\times 2})$ by $\lambda N + (1-\lambda) R = \nabla u$, i.e.~$N=R(\Ibb+\tfrac{\gamma}{\lambda} e_1\otimes e_2)$. Then, 
\begin{equation}\label{representationsE}
\begin{aligned}
E(u)&= \frac{s_1^2}{\lambda} \int_\Omega \gamma^2 \dd{x} - 2 s_1 s_2 \int_\Omega \gamma \dd x 
= \int_{\Omega} \frac{1}{\lambda}|\gamma m_2 e_1 +\lambda m|^2 -\lambda \dd x \\
& = \int_{\Omega} \frac{1}{\lambda} \abs{\nabla um - (1-\lambda)Rm}^2 - \lambda \dd x =\lambda \int_\Omega \abs{Nm}^2 -1 \dd{x}.
\end{aligned}
\end{equation}

\begin{proof}[Proof of Theorem~\ref{theo:Gammalimit}] Here again, the proof follows three steps.

\textit{Step~1: Compactness.} The proof of compactness is identical with the beginning of Step~1 in Theorem \ref{theo:Gammalimit_e1} for $\tau=0$, when substituting $e_1$ with $s$ and $e_2$ with $m$.

\textit{Step~2: Recovery sequence.}
Let $u\in W^{1,2}(\Omega;\R^2)\cap L^2_0(\Omega;\R^2)$ such that $\nabla u = R(\Ibb + \gamma e_1 \otimes e_2)$ with $R\in SO(2)$ and $\gamma\in L^2(\Omega)$ be given. 
The idea of the construction is to specify first a sequence of functions with asymptotically optimal energy that are piecewise affine on the layers and whose gradients lie in $\Ncal_s$. Then, to obtain admissible deformations, we approximate these functions in the softer layers with fine simple laminates between gradients in $\Mcal_s$, which requires tools from relaxation theory and convex integration as discussed in Section~\ref{subsec:construction_tools}.

\textit{Step~2a: Auxiliary functions for constant $\gamma$.} 
Let $\gamma \in K_{s, \lambda}$ be constant.
By Lemma~\ref{lem: specific laminates in N} we find $N \in \Ncal_s$ such that
\begin{align}\label{repr_gradient}
\lambda N+(1-\lambda) R = R(\Ibb +\gamma e_1\otimes e_2)
\end{align} 
and $Ne_1=Re_1$, which guarantees the compatibility for constructing laminates between the gradients $R$ and $N$ with $e_2$ the normal on the jump lines of the gradient.
Precisely, we define for $\eps>0$ the function $v_\eps \in W^{1, 2}(\Omega;\R^2)$ with zero mean value characterized by $\nabla v_\eps=\nabla v_1(\eps^{-1}\frarg)$, where $v_1\in W^{1, \infty}_{\rm loc}(\R^2;\R^2)$ is such that
\begin{align}\label{def_v1}
\nabla v_1 =  R \mathbbm{1}_{\Yrig} + N \mathbbm{1}_{\Ysoft}.
\end{align} 

Then, by the weak convergence of highly oscillating functions and~\eqref{repr_gradient},
\begin{align*}
\nabla v_\epsilon \rightharpoonup \lambda N +(1-\lambda)R = \nabla u \quad \text{in $L^2(\Omega;\R^{2\times 2})$}.
\end{align*}
Regarding the energy contribution of the sequence $(v_\eps)_\eps$ it follows that
\begin{align*}
\lim_{\eps\to 0}\int_{\Omega} \abs{\nabla v_\eps m}^2 -1\dd{x}  =  \lim_{\eps\to 0}\int_\Omega (\abs{Nm}^2 -1)\mathbbm{1}_{\epsilon \Ysoft\cap \Omega} \dd x = \lambda  \abs{\Omega} (\abs{Nm}^2 -1) =E(u).
\end{align*}
Notice that  $v_\eps$ is  not admissible for $E_\eps$ if $N\in \Ncal_s\setminus \Mcal_s$.

\textit{Step~2b: Admissible recovery sequence for constant $\gamma$.} 
Next, we modify the construction of Step~2a in the softer layers to obtain admissible functions, while preserving the energy. This is done by approximation with the simple laminates established in Corollary~\ref{cor:convexintegration}, see Figure~\ref{fig:laminates} for illustration.

\begin{figure}
 \begin{tikzpicture}[scale = 1.1]

       \begin{scope}

         \draw[pattern = north east lines] (0,0) rectangle (4.2,2.8);
         \fill[even odd rule, fill = white, very thick] {(2.1,1.4) ellipse (1.9 and 1.2)} (0,0) rectangle (4.2,2.8);
         \draw (2.1,1.4) ellipse (1.9 and 1.2);

         \draw (0,0) rectangle (4.2,4.2);
         \draw[fill=gray!30] (0,2.8) rectangle (4.2,4.2);
         
         \draw (2.1,3.5) node {$R \in SO(2)$}; 
      \draw (6.2, 3.95) node {$N = \mu F + (1-\mu)G $};
         \draw[thick] (3.2,0.9) circle (0.3);
        \draw[thick,dashed](2.988,1.112) -- (4.998,3.102);
        \draw[thick, dashed](3.2,0.6) -- (6.2,0.2);

         \begin{scope}[shift = {(6.2,1.9)}]
         \clip (0,0) circle (1.7);
         \draw[thick, color = gray, fill = white] (0,0) circle (1.7);
         
         \draw (-2,0) -- (2,4);
	 \draw (-2,-1) -- (2,3);
         \draw (-2,-2) -- (2,2);
         \draw (-2,-3) -- (2,1);
         \draw (-2,-4) -- (2,0);

         \draw (-0.75,0.75) node {$F$};
         \draw (-0.25,0.25) node {$G$};
         \draw (0.25,-0.25) node {$F$};
         \draw (0.75,-0.75) node {$G$};
         \end{scope}
         
   	 \draw (-0.4, 3.95) node {$Y$};
         \draw[thick] (6.2,1.9) circle (1.7);
         
         \draw[->, thick] (-1.6,2.4) -- (0.3,2.4);
         \draw[->, thick] (-1.6,0.4) -- (0.3,0.4);
	\draw[very thick] (-1.6, 2.4) -- (-1.6, 2);  
	\draw[very thick] (-1.6, 0.4) -- (-1.6, 0.8);    
         \draw (-1.6,1.6) node {convex};
         \draw (-1.6,1.2) node {integration};
      \end{scope}\clip(-0.7,-0) rectangle (7,4);

    \end{tikzpicture}
  \caption{Construction of admissible deformations by approximation with fine simple laminates in the softer component (illustrated by $\nabla z_\eps$ in the unit cell $Y$). Here, $Ne_1=Re_1$, $N=\mu F+(1-\mu) G$ with $F, G\in \Mcal_s$ and $\mu\in (0,1)$ as in Lemma~\ref{cor:convexintegration}, and the measure of the boundary region of $\Ysoft$ is smaller than $\eps$. }\label{fig:laminates}
    \end{figure}
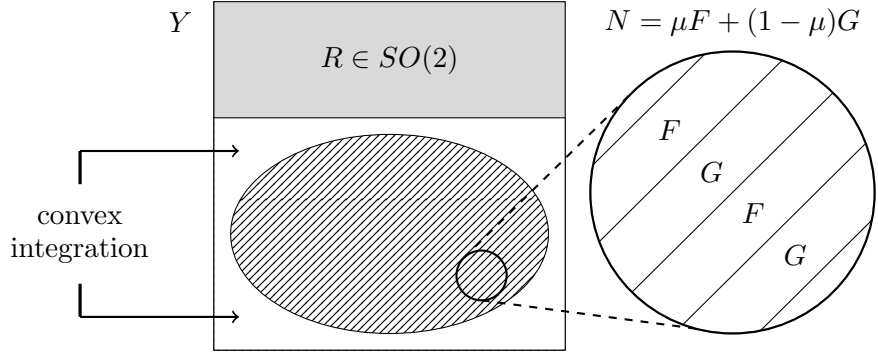

Let $N\in \Ncal_s$ as in Step~2a. For $\eps>0$ and $i\in \Z^2$, let $\varphi_{\eps, i}\in W^{1, \infty}(i+(0,1)\times (0, \lambda);\R^2)$ be a function resulting from Corollary~\ref{cor:convexintegration} applied to $\Omega=i+(0,1)\times (0, \lambda)\subset \R^2$ and $\delta=\eps$. 
We define
\begin{align}\label{eq64}
\varphi_\eps(x) = \sum_{i\in \Z^2} \bigl(\varphi_{\eps, i}(x)-Nx\bigr)\mathbbm{1}_{i+(0,1)\times (0, \lambda)}, \qquad x\in \R^2,
\end{align} 
assuming without loss of generality that $\varphi_\eps \in W^{1, \infty}(\R^2;\R^2)$ is $Y$-periodic. 
With $v_1$ from Step~2a, we set $z_{\eps} = v_1 + \varphi_\eps$. 
Since $\nabla \varphi_\eps \weakly 0$ in $L^2_{\rm loc}(\R^2;\R^{2\times 2})$ as $\eps\to 0$,
\begin{align}\label{convergence9}
\nabla z_{\eps}\weakly \nabla v_1 \quad \text{in $L^2_{\rm loc}(\R^2;\R^{2\times 2})$}.
\end{align}

Defining $u_{\eps}\in W^{1,2}(\Omega;\R^2)$ with mean value zero by 
\begin{align*}
\nabla u_{\eps}= \nabla z_{\eps} \Bigl(\frac{\frarg}{\eps}\Bigr)
\end{align*}
provides admissible functions for $E_\eps$. 
Indeed, by construction one has  $\nabla u_{\eps}=R\in SO(2)$ in $\eps\Yrig\cap \Omega$ and $\nabla u_{\eps}\in \Mcal_s$ a.e.~in $\Omega$ due to the properties of $\nabla \varphi_{\eps,i}$.

In view of~\eqref{convergence9}, a generalization of the classical lemma on weak convergence of highly oscillating sequences (see e.g.~\cite[Theorem~1]{LuW02}) yields 
\begin{align}\label{convergence10}
\nabla u_\eps\weakly \int_{Y} \nabla v_1\dd{y} = \lambda N+(1-\lambda)R =\nabla u \quad \text{in $L^2(\Omega;\R^{2\times 2})$}.
\end{align}

Finally, as $\abs{\nabla z_\eps m}< \abs{Nm} + \eps$ a.e.~in $ \R^2$ in consequence of~\eqref{eq_convexintegration} and $\abs{Rm}=1\leq \abs{Nm}$, we conclude that
\begin{align*}
\limsup_{\eps \rightarrow\infty} E_\eps(u_{\eps}) 
&= \limsup_{\eps\to 0} \int_{\Omega}(\abs{\nabla z_\eps (\eps^{-1}\frarg) m}^2-1)
\mathbbm{1}_{\eps\Ysoft\cap \Omega} \dd{x}\\
& \leq \lim_{\eps\to 0}  \int_{\Omega} (\abs{N m}^2-1) \mathbbm{1}_{\eps\Ysoft \cap \Omega} \dd{x} =  E(u).
\end{align*}

\textit{Step~2c: Localization for piecewise constant $\gamma$.}
Suppose that $\Omega\subset \R^2$ is a cube, say $Q=(0, l)^2$ with $l>0$. 
In this step, the construction of Step~2b is extended to (finitely) piecewise constant $\gamma$.

In view of the independence of $\gamma$ on $x_1$ by~Remark~\ref{rem:rigidity}\,b), we may identify $\gamma\in L^2(\Omega)$ with a one-dimensional simple function
\begin{align*}
\gamma(t)=\sum_{i=1}^n \gamma_i \mathbbm{1}_{(t_{i-1}, t_i)}(t), \qquad t\in (0,l),
\end{align*} 
with $\gamma_i\in K_{s, \lambda}$ for $i=1,\ldots, n$ and $0=t_0 < t_1<\ldots  < t_n=l$. 
Let us denote by $N_i\in \Ncal_s$ the matrices corresponding to $\gamma_i$ according to Lemma~\ref{lem: specific laminates in N}\,\textit{i}).

For $i\in \{1, \ldots, n\}$, let $({u}_{i, \eps})_\eps \subset W^{1,2}((0, l)\times (t_{i-1}, t_i);\R^2)$ be the recovery sequences corresponding to $\gamma_i$ as constructed in Step~2b. We then
define $u_\eps\in W^{1,2}(\Omega;\R^2)$ with vanishing mean value by
\begin{align*}
\nabla u_\eps = R + \sum_{i=1}^n (\nabla u_{i, \eps}-R)\mathbbm{1}_{\eps\Ysoft\cap \Omega}  \mathbbm{1}_{\mathbb{R}\times ( \lceil\eps^{-1 }t_{i-1}\rceil \eps, \lfloor \eps^{-1}t_{i}\rfloor\eps)}.
\end{align*}
Notice that $u_\eps$ is well-defined due to the compatibility between $\nabla u_{i, \eps}$ and $R$ along the jump lines $\R\times \eps\Z$. 
Then,~\eqref{convergence10} leads to
\begin{align*}
\nabla u_\eps \weakly &  \sum_{i=1}^n (\lambda N_i  +(1-\lambda)R)\mathbbm{1}_{[\R\times (t_{i-1}, t_{i})]\cap \Omega} 
=\nabla u\qquad \text{in $L^2(\Omega;\R^{2\times 2})$,}
\end{align*}
and regarding the energy contributions it follows that
\begin{align*}
\lim_{\eps\to 0} \int_\Omega |\nabla u_\epsilon m|^2 -1 \dd x 
& = \lim_{\eps\to 0}  \int_{\Omega} \sum_{i=1}^n(\abs{N_im}^2-1)\mathbbm{1}_{\eps\Ysoft \cap \Omega}\mathbbm{1}_{\mathbb{R}\times ( \lceil\eps^{-1 }t_{i-1}\rceil \eps, \lfloor \eps^{-1}t_{i}\rfloor\eps)} \dd{x} \nonumber \\&=\lambda \int_{\Omega} \sum_{i=1}^n(\abs{N_im}^2-1)\mathbbm{1}_{[\R\times (t_{i-1}, t_i)]\cap \Omega}\dd{x} = E(u).
\end{align*}

To generalize the result to a Lipschitz domain $\Omega$, we exhaust $\Omega$ successively with shifted, overlapping cubes, performing the necessary adaptions of the glued-in laminate constructions as well as the appropriate translations, 
cf.~Step~2 of Theorem~\ref{theo:Gammalimit_e1} for a related argument.

\textit{Step~2d: Approximation and diagonalization for general $\gamma$.} 
For general $\gamma \in L^2(\Omega)$ with $\gamma \in K_{s, \lambda}$ a.e.~in $\Omega$ we use an approximation and diagonalization argument.

Let $(\zeta_k)_k\subset L^2(\Omega)$ be a sequence of simple functions with $\zeta_k\to \gamma$ in $L^2(\Omega)$.
For $k\in \N$, let $w_k \in W^{1,2}(\Omega;\R^2)$ with $\int_{\Omega}w_k\dd{x} = 0$ be defined by $\nabla w_k = R(\Ibb + \zeta_k e_1 \otimes e_2)$. Then,
\begin{align}\label{con13}
\norm{w_k-u}_{L^2(\Omega;\R^2)}\leq c \norm{\nabla w_k-\nabla u}_{L^2(\Omega;\R^{2\times 2})} \leq c \norm{\zeta_k-\gamma}_{L^2(\Omega)},
\end{align} 
and 
\begin{align}\label{con14}
\abs{E(w_{k}) - E(u)} \leq \norm{\zeta_k+\gamma}_{L^2(\Omega)}\norm{\zeta_k-\gamma}_{L^2(\Omega)}\leq c \|\zeta_k-\gamma\|_{L^2(\Omega)}
\end{align}
for all $k\in \N$ with a constant $c=c(\gamma)>0$.
If $(w_{k, \eps})_\eps$ is a recovery sequence for $w_k$ as constructed in Step~2c, then
in particular, 
$\lim_{\eps\to 0}\norm{w_{k,\eps}-w_k}_{L^2(\Omega;\R^2)}=0$, and $\lim_{\eps\to 0} E_{\eps}(w_{k,\eps}) =E(w_k)$ for all $k\in\N$.
Hence, together with~\eqref{con13} and~\eqref{con14},
\begin{align*}
&\lim_{k\rightarrow \infty} \lim_{\eps\to 0} \|w_{k,\eps} - u\|_{L^2(\Omega;\R^2)} + \abs{E_{\eps}(w_{k,\eps}) - E(u)}\\ 
& \qquad\qquad \leq
\lim_{k\to \infty}  \norm{w_k-u}_{L^2(\Omega;\R^2)}+\abs{E(w_k)-E(u)} =0 . 
\end{align*}
From the selection principle by Attouch~\cite[Corollary~1.16]{Att84}, we infer the existence of a diagonal sequence $(u_\eps)_\eps$
with $u_\eps=w_{k(\eps), \eps}$ such that
\begin{align*}
 u_\eps \rightarrow u \quad \text{in $L^2(\Omega; \R^2)$}\qquad \text{and} \qquad E_{\eps}(u_{\eps}) \rightarrow E(u)
\end{align*}
as $\eps\to 0$. Notice that also $u_\eps \weakly u$ in $W^{1,2}(\Omega;\R^2)$ in consideration of Step~1.

\textit{Step~3: Lower bound.} 
Let $(\eps_j)_j$ with $\eps_j\to 0$ as $j\to \infty$. Suppose $(u_j)_j$ is a bounded energy sequence for $(E_{\eps_j})_j$ with $u_j\to u$ in $L^2(\Omega;\R^2)$. 
By Step~1, we know that $u\in W^{1,2}(\Omega;\R^2)$ with $\nabla u = R(\Ibb + \gamma e_1 \otimes e_2)$ for $R\in SO(2)$ and $\gamma \in L^2(\Omega)$. 

If $\gamma$ is piecewise constant, then
\begin{align*}
\liminf_{j\to \infty} E_{\eps_j}(u_j) \geq \int_{\Omega} \frac{1}{\lambda} \abs{\nabla um-(1-\lambda)Rm}^2 -\lambda \dd{x}=E(u)
\end{align*}
as an immediate consequence of Corollary~\ref{cor: estimate lower bound}.

Similarly to Step~3 in the proof of~Theorem~\ref{theo:Gammalimit_e1}, we use approximation to establish the lower bound for general $u$. Here again, we may restrict ourselves to working with the assumption that $\Omega$ is a cube, say $\Omega=Q=(0,l)^2$ for $l>0$.

Let $(\zeta_k)_k\subset L^2(0,l)$ be one-dimensional simple functions (identified with a sequence in $L^2(\Omega)$ by constant extension in $x_1$) of the form $\zeta_k= \sum_{i=1}^{n_k} \zeta_{k,i} \mathbbm{1}_{(t_{k, i-1}, t_{k, i})}$ with nested partitions $0=t_{k, 0}<t_{k,1}< \ldots < t_{k, n_k}=l$ such that $\zeta_k\leq \zeta_{k+1}$ and
\begin{align}\label{est40}
\zeta_k\to \gamma \quad \text{in $L^2(\Omega)$ as $k\to \infty$.}
\end{align}
Moreover, let $w_k\in  W^{1,2}(\Omega;\R^2) \cap L^2_0(\Omega;\R^2)$ be given by $\nabla w_k=R(\Ibb+\zeta_k e_1\otimes e_2)$ for $k\in \N$.

In the following, we aim at finding sequences $(v_{j})_j, (v_{k,j})_j \subset W^{1, 2}(\Omega;\R^2)$ with vanishing mean value such that 
\begin{align}\label{convergence11}
v_{j} \weakly u \quad \text{and}\quad  v_{k,j} \weakly w_k\quad\text{both in $W^{1, 2}(\Omega;\R^2)$ as $j\to \infty$}
\end{align} 
for all $k\in\N$, and 
\begin{align}\label{eq45}
\nabla v_{k,j}= \nabla v_{ j} \quad\text{a.e.~in $\epsilon_j \Yrig \cap \Omega$}
\end{align}
for all $j,k\in \N$. Moreover, we seek to have an estimate of the type
\begin{align}\label{est50}
\norm{\nabla v_{k, j}- \nabla v_{j}}_{L^2(\Omega;\R^{2\times 2})} \leq c\norm{\zeta_k-\gamma}_{L^2(\Omega)}
\end{align}
with $c>0$ independent of $j, k$.

Notice that instead of using recovery sequences for $(v_j)_j$ and $(v_{k,j})_j$, we will choose the piecewise affine functions obtained from Step~2, when skipping Step~2b (where fine laminates are glued in the softer layers). Indeed, the lack of admissibility does not cause any issues here. The advantage, though, is that due to their simpler structure, these functions are easier to compare in the sense of~\eqref{est50}. Recall that the full recovery sequences of Step~2d involve regions resulting from convex integration, where the  functions are not explicitly known and therefore hard to control.

Precisely, for $j, k\in \N$ we define $v_{k,j}$ by
\begin{align*}
\nabla v_{k, j} = R + \sum_{i=1}^{n_k} (N_{k, i}-R) \mathbbm{1}_{\eps_j\Ysoft\cap \Omega} \mathbbm{1}_{\R\times (\lceil \eps_j^{-1}t_{k,(i-1)}\rceil\eps_j, \lfloor\eps_j^{-1}t_{k,i}\rfloor\eps_j)}
\end{align*}
with $N_{k, i}\in \Ncal_s$ corresponding to $\zeta_{k, i}$ in the sense of Lemma~\ref{lem: specific laminates in N}\,\textit{i}),
while $(v_{ j})_j$ results from 
a diagonalization argument as in Step~2d, i.e.~$v_{ j} = v_{k(j), j}$ for $j\in \N$. 
Hence,~\eqref{convergence11} and~\eqref{eq45} are satisfied. Regarding~\eqref{est50}, we argue that for 
$j, k, K\in \N$, 
\begin{align*}
\norm{\nabla v_{k,j} - \nabla v_{K, j}}_{L^2(\Omega;\R^{2\times 2})} 
&\leq  \normB{ \sum_{i=1}^{n_{k}} \sum_{h=1}^{n_K} ({N}_{k, i} - N_{K, h}) \mathbbm{1}_{\R\times (t_{K, h-1}, t_{K, h})}\mathbbm{1}_{\R\times(t_{k, i-1}, t_{k, i})}}_{L^2(\Omega;\R^{2\times 2})}\\
&= \frac{1}{\lambda} \normB{ \sum_{i=1}^{n_{k}} \sum_{h=1}^{n_K}  ({\zeta}_{k, i} - \zeta_{K, h})\mathbbm{1}_{\R\times (t_{K, h-1}, t_{K, h})}\mathbbm{1}_{\R\times (t_{k, i-1}, t_{k, i})}}_{L^2(\Omega)} \\
&= \frac{1}{\lambda}\norm{\zeta_{k}-\zeta_{K}}_{L^2(\Omega)},
\end{align*}
where we have used~\eqref{Ngamma}. 
Thus,
\begin{align*}
\norm{\nabla v_{k,j} - \nabla v_j}_{L^2(\Omega;\R^{2\times 2})} \leq \sup_{K\in \N} \norm{\nabla v_{k,j} - \nabla v_{K, j}}_{L^2(\Omega;\R^{2\times 2})}
\leq \frac{1}{\lambda}\norm{\zeta_k-\gamma}_{L^2(\Omega)},
\end{align*}
which yields~\eqref{est50}.

Now we set
\begin{align*}
z_{k,j} = u_j - v_j + v_{k,j}
\end{align*}
for $j, k\in \N$. Due to~\eqref{eq45}, it holds that $|\nabla z_{k,j} e_2|= 1$ a.e.~in $\epsilon_j\Yrig \cap \Omega$, as well as
\begin{align*}
\nabla z_{k,j}\mathbbm{1}_{\epsilon_j \Yrig}  
\weakly (1-\lambda)R \quad \text{in $L^2(\Omega;\R^{2\times2})$} 
\end{align*}
as $j\to \infty$ for $k\in \N$, cf.~Proposition~\ref{prop:rigidity}.
Since also $z_{k,j}\weakly w_k$ in $W^{1, 2}(\Omega;\R^2)$ for $j\to \infty$ by~\eqref{convergence11} with $w_k$ piecewise affine, we argue as in the proof of Corollary~\ref{cor: estimate lower bound} to derive
\begin{align*}
 \liminf_{j\to \infty} \int_{\Omega} |\nabla z_{k,j} m|^2 - 1 \dd x \geq E(w_k)
\end{align*}
for all $k\in \N$.
Along with~\eqref{est50} and~\eqref{est40}, we finally conclude that
\begin{align*}
\liminf_{j\to \infty} E_{\eps_j}(u_j) \geq \lim_{k\to \infty} E(w_k)  -  c\norm{\zeta_k-\gamma}_{L^2(\Omega)}= E(u),
\end{align*}
in analogy to Step~3 in the proof of Theorem~\ref{theo:Gammalimit_e1}.
\end{proof}

\begin{remark}
It may be more intuitive from the point of view of applications - yet technically more elaborate - to replace the recovery sequence obtained in Step~2b for the affine case by optimal deformations showing ``non-stop" simple laminates throughout the softer layers. For this construction, just dispense with the adjustment of the affine boundary conditions along the vertical edges of the unit cell, and instead refine and shift the laminate appropriately to guarantee $Y$-periodicity. 
\end{remark}

\section{Comparison with the (multi)cell formula}\label{sec:cell_formula}
It is a well-known result in the theory of periodic homogenization of integral functionals with standard growth that  the integrand of the effective limit functional is characterized by a multicell formula, or, in the convex case, by a cell formula, see e.g.~\cite{Mul87},~\cite{Mar78}.
In this final section, we show that the same is true for the homogenization result in Theorem~\ref{theo:Gammalimit}, where extended-valued functionals appear.

Recalling $W$ defined in~\eqref{def_W}, we consider the multicell formula
\begin{align*}
 W_\#(F)= \inf_{k\in \N} \inf_{\psi\in W^{1,2}_\#(kY;\R^2)} \frac{1}{k^2}\int_{kY} W(y, F+\nabla \psi(y))\dd{y}
\end{align*}
for $F\in \R^{2\times 2}$, or equivalently, by a change of variables,
\begin{align}\label{homformula}
W_\#(F)=\liminf_{k\to \infty} \inf_{\psi\in W^{1,2}_\#(Y;\R^2)}\int_Y W(ky, F+\nabla\psi(y))\dd{y},
\end{align}
as well as the cell formula 
\begin{align*}
W_{\rm cell}(F)= \inf_{\psi\in W^{1,2}_\#(Y;\R^2)} \int_{Y} W(y, F+\nabla \psi(y))\dd{y}.
\end{align*}

Moreover, let us denote by $W_{\rm hom}$ the density of the limit energy $E$ in \eqref{def:GammalimitI}, i.e.~ 
\begin{align}\label{representation3}
E(u)=\int_{\Omega} W_{\rm hom}(\nabla u)\dd{x},\qquad u\in W^{1,2}(\Omega;\R^2),
\end{align}
where
\begin{align*}
W_{\rm hom} (F) = \begin{cases}\frac{1}{\lambda}\abs{Fm-(1-\lambda)Rm}^2-\lambda & \text{if $F= R(\Ibb+\gamma e_1\otimes e_2)$, $\gamma\in K_{s, \lambda}$,} \\
\infty & \text{otherwise,}\end{cases}\quad F\in \R^{2\times 2}.
\end{align*}
This alternative representation of $E$ follows from a straightforward calculation, see~\eqref{representationsE}. 

Before focusing on the relation between $W_{\rm hom}$, $W_{\#}$, and $W_{\rm cell}$, we prove the following auxiliary result.

\begin{lemma}\label{lem:rigid_periodic}
Let $F\in \R^{2\times 2}$ and $\psi\in W^{1, 2}_\#(Y;\R^2)$.

$i)$ If $F+\nabla \psi \in \Mcal_s$ a.e.~in $Y$ and $F+\nabla \psi\in SO(2)$ a.e.~in $\Yrig$, then $F\in  \Mcal_{e_1}\cap \Ncal_s$. 

$ii)$ If $F=R(\Ibb+\gamma e_1\otimes e_2)$ with $R\in SO(2)$ and $\gamma\in \R$ and $F+\nabla \psi\in \Mcal_{e_1}$ a.e.~in $Y$, then  	
\begin{align*}
 		F+\nabla \psi =R(\Ibb + \zeta e_1\otimes e_2),
	\end{align*} 
	where $\zeta\in L^2(Y)$ with $\int_Y \zeta \dd{y}=\gamma$.

\end{lemma}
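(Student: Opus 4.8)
The plan is to exploit $Y$-periodicity of $\psi$ to promote the pointwise inclusions to rigidity of the averaged gradient $F=\int_Y(F+\nabla\psi)\dd{y}$ (equality since $\int_Y\nabla\psi\dd{y}=0$), treating the two parts in parallel: $SO(2)$-rigidity via Reshetnyak's theorem for $i)$, and via the equality case of Jensen's inequality for $ii)$.

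For $i)$, write $u(y)=Fy+\psi(y)$, so that $\nabla u=F+\nabla\psi$ is $Y$-periodic, lies in $\Mcal_s$ a.e., and lies in $SO(2)$ a.e.\ on the $Y$-periodic extension of $\Yrig$. As $\Yrig=[0,1)\times[\lambda,1)$, this extension is, up to a null set, the disjoint union of the open strips $\R\times(\lambda+k,1+k)$, $k\in\Z$. I would apply Lemma~\ref{lem:rigidity} on a bounded Lipschitz subdomain of one such strip, say $(-1,2)\times(\lambda,1)$, to conclude that $\nabla u\equiv R_0$ for a single $R_0\in SO(2)$ there; hence $u(y)=R_0 y+b_0$ a.e.\ on that rectangle for some $b_0\in\R^2$, so $\psi(y)=(R_0-F)y+b_0$ there, and the $e_1$-periodicity of $\psi$ forces $(R_0-F)e_1=0$, i.e.\ $Fe_1=R_0e_1$ and $\abs{Fe_1}=1$. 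Since $F+\nabla\psi\in\Mcal_s$ we have $\det(F+\nabla\psi)=1$ and $\abs{(F+\nabla\psi)s}=1$ a.e.; averaging over $Y$, using that $\det$ is a null Lagrangian (so its $Y$-average is unchanged when $F+\nabla\psi$ is replaced by its mean $F$) together with Jensen's inequality, gives $\det F=1$ and $\abs{Fs}\le1$. Thus $F\in\Mcal_{e_1}\cap\Ncal_s$, since $\Ncal_s=\{G\in\R^{2\times2}:\det G=1,\ \abs{Gs}\le1\}$. (Alternatively, the rescalings $u_n(x)=\tfrac1n u(nx)$ form a sequence fitting the hypotheses of Proposition~\ref{prop:rigidity} with $\eps_n=1/n$, which yields $F\in\Mcal_{e_1}$ at once; the inequality $\abs{Fs}\le1$ then still requires the weak $L^2$-closedness of $\{G:\abs{Gs}\le1\}$.)

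For $ii)$, again put $u(y)=Fy+\psi(y)$. Every matrix in $\Mcal_{e_1}$ equals $Q(\Ibb+c\,e_1\otimes e_2)$ with $Q\in SO(2)$, $c\in\R$, so $v:=\nabla u\,e_1$ satisfies $\abs{v}=1$ a.e.\ and $\bar v:=\int_Y v\dd{y}=Fe_1=Re_1$ with $\abs{\bar v}=1$. Then $\int_Y\abs{v-\bar v}^2\dd{y}=\int_Y\abs{v}^2\dd{y}-\abs{\bar v}^2=1-1=0$, so $\nabla u\,e_1=Re_1$ a.e.; since a rotation of the plane is determined by its action on $e_1$, the rotation part of $\nabla u$ is $R$ a.e., whence $\nabla u=R(\Ibb+\zeta\,e_1\otimes e_2)$ with $\zeta:=(\nabla u\,e_2-Re_2)\cdot Re_1\in L^2(Y)$. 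Averaging $\nabla u\,e_2=Re_2+\zeta\,Re_1$ over $Y$ and comparing with $\int_Y\nabla u\,e_2\dd{y}=Fe_2=Re_2+\gamma\,Re_1$ then yields $\int_Y\zeta\dd{y}=\gamma$.

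The main obstacle is the geometric bookkeeping in $i)$: recognizing the periodic extension of $\Yrig$ as a union of strips on which Reshetnyak's rigidity applies, and extracting the single constant rotation $R_0$ from $e_1$-periodicity of $\psi$. The remaining ingredients — the null-Lagrangian identity $\int_Y\det(F+\nabla\psi)\dd{y}=\det F$ for periodic $\psi$ and the equality analysis in Jensen's inequality — are routine, and part $ii)$ reduces to a short computation once the rigidity of $\nabla u\,e_1$ is established.
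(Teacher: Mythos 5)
Your proof is correct and takes essentially the same route as the paper: Reshetnyak rigidity (Lemma~\ref{lem:rigidity}) on the rigid layer together with the $y_1$-periodicity of $\psi$ gives $\abs{Fe_1}=1$, the null-Lagrangian property of $\det$ plus Jensen's inequality give $F\in\Ncal_s$, and part $ii)$ is the same equality-case-in-Jensen argument identifying the pointwise rotation with $R$ and then averaging in $y_2$ to get $\int_Y\zeta\dd{y}=\gamma$. The only cosmetic difference is that the paper obtains $Fe_1=Qe_1$ by averaging $(F+\nabla\psi)e_1$ over $\Yrig$ within a single cell, whereas you read it off from the affine formula for $\psi$ on an enlarged rectangle in the periodic extension; the two are equivalent.
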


\begin{proof} 

For $i)$,  let $R\in L^\infty(Y;SO(2))$ and $\zeta\in L^2(Y)$ such that $F+\nabla \psi=R(\Ibb+\zeta s\otimes m)$. 
Then, using that $\psi$ is $Y$-periodic, we obtain
\begin{align*}
\abs{Fs} = \absB{\int_Y Fs +\nabla \psi s \dd{y} }=\absB{ \int_Y Rs\dd{y}} \leq 1.
\end{align*}
As the map $\det: F\mapsto \det F$ is quasiaffine, i.e.~$\det$ and $-\det$ are both quasiconvex (see~e.g.~\cite{Dac08} for an introduction to generalized notions of convexity), it follows that 
\begin{align*}
\det F= \int_{Y}\det(F+\nabla \psi)\dd{y} = \int_Y \det R\cdot \det (\Ibb +\zeta s\otimes m) \dd{y} =1.
\end{align*}
Hence, $F\in \Ncal_s$.

To prove $F\in \Mcal_{e_1}$, or rather $\abs{Fe_1}= 1$, we exploit that by rigidity (see~Lemma~\ref{lem:rigidity}), there exists $Q\in SO(2)$ such that $F+\nabla \psi=Q$ a.e.~in $\Yrig$. The periodicity of $\psi$ in $y_1$ then leads to 
\begin{align*}
Fe_1= \dashint_{\Yrig} Fe_1+ \partial_1 \psi \dd{y} = \dashint_{\Yrig} Fe_1+ \nabla \psi e_1\dd{y} = Qe_1,
\end{align*} which entails
$\abs{Fe_1}=1$.

As regards $ii)$, since $F+\nabla\psi \in \Mcal_{e_1}$ a.e. in $Y$, we find $Q \in L^\infty(Y; SO(2))$ and $\zeta \in L^2(Y)$ such that 
		\begin{align*}
			F+\nabla \psi = Q(\Ibb +\zeta e_1 \otimes e_2) \quad \text{in $Y$}.
		\end{align*}
		By the periodicity of $\psi$ in $y_1$,
		\begin{align*}
				\int_Y Qe_1 \dd{y} = Fe_1 + \int_Y \nabla \psi e_1\dd{y} = Re_1, 
		\end{align*}
		which, owing to $\abs{Qe_1} = 1$ a.e.~in $Y$, implies $Q=R$. 
		On the other hand, we derive from the periodicity of $\psi$ in $y_2$ that  
		\begin{align*}
			\Bigl(\int_Y \zeta \dd{y}\Bigr) Re_1=\int_Y \zeta Q e_1\dd{y} = Fe_2 + \int_Y \nabla \psi e_2\dd{y} - \int_Y Qe_2\dd{y} = \gamma Re_1.
		\end{align*}
		This finishes the proof.
\end{proof}

As indicated above, the multicell and cell formula for $W$ both coincide with the homogenized integrand $W_{\rm hom}$.
\begin{proposition}[Characterization of the (multi)cell formula]
With the definitions above it holds that $W_{\rm hom} = W_\# = W_{\rm cell}$.
\end{proposition}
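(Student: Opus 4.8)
\emph{Overall strategy.} The plan is to establish the cyclic chain $W_{\rm hom}\le W_\#\le W_{\rm cell}\le W_{\rm hom}$, which forces all three to agree. The inequality $W_\#\le W_{\rm cell}$ is immediate, since the single cell ($k=1$) is admissible in the infimum defining $W_\#$. It thus remains to prove $W_{\rm cell}\le W_{\rm hom}$, $W_{\rm hom}\le W_{\rm cell}$ (which together give $W_{\rm cell}=W_{\rm hom}$) and $W_{\rm hom}\le W_\#$. For the first of these I may assume $W_{\rm hom}(F)<\infty$, say $F=R(\Ibb+\gamma e_1\otimes e_2)$ with $R\in\SO(2)$ and $\gamma\in K_{s,\lambda}$, and repeat the affine-case recovery construction (Step~2b of the proof of Theorem~\ref{theo:Gammalimit}) on the single cell $Y$: with $N=R(\Ibb+\tfrac{\gamma}{\lambda}e_1\otimes e_2)\in\Mcal_{e_1}\cap\Ncal_s$ as in Lemma~\ref{lem: specific laminates in N}\,i) --- so $Ne_1=Re_1$ and $\lambda N+(1-\lambda)R=F$ --- Corollary~\ref{cor:convexintegration} provides, for each $\eps>0$, a Lipschitz map on $\Ysoft$ equal to $Nx$ on $\partial\Ysoft$, with gradient in $\Mcal_s$ a.e.~and $m$-stretch less than $|Nm|+\eps$; gluing it into the softer layer and prescribing the constant gradient $R$ on $\Yrig$ yields a $Y$-periodic $\psi_\eps\in W^{1,\infty}_\#(Y;\R^2)$ with $F+\nabla\psi_\eps\in\SO(2)$ a.e.~in $\Yrig$ and in $\Mcal_s$ a.e.~in $\Ysoft$, so $\int_Y W(y,F+\nabla\psi_\eps)\dd y\le\lambda\big((|Nm|+\eps)^2-1\big)$. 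Since $Fm-(1-\lambda)Rm=\lambda Nm$ gives $\lambda(|Nm|^2-1)=W_{\rm hom}(F)$, letting $\eps\to0$ gives $W_{\rm cell}(F)\le W_{\rm hom}(F)$. (When $s=e_1$ one has $N\in\Mcal_{e_1}=\Mcal_s$ and no convex integration is needed.)

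\emph{The bound $W_{\rm hom}\le W_{\rm cell}$.} Take any $\psi\in W^{1,2}_\#(Y;\R^2)$ with $\int_Y W(y,F+\nabla\psi)\dd y<\infty$ (else there is nothing to show). Then $F+\nabla\psi\in\Mcal_s$ a.e.~in $Y$ and $F+\nabla\psi\in\SO(2)$ a.e.~in $\Yrig$, so Lemma~\ref{lem:rigid_periodic}\,i) yields $F=R(\Ibb+\gamma e_1\otimes e_2)$ with $R\in\SO(2)$, while Lemma~\ref{lem:rigidity} and the $y_1$-periodicity of $\psi$ force $F+\nabla\psi=R$ a.e.~in $\Yrig$. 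Then $N:=\dashint_{\Ysoft}(F+\nabla\psi)\dd y$ satisfies $\lambda N+(1-\lambda)R=F$, hence $N=R(\Ibb+\tfrac{\gamma}{\lambda}e_1\otimes e_2)$; from $|(F+\nabla\psi)s|=1$ a.e.~in $\Ysoft$ and Jensen's inequality we obtain $|Ns|\le1$, so $\det N=1$, $N\in\Ncal_s$, and $\gamma\in K_{s,\lambda}$ (in particular $W_{\rm hom}(F)<\infty$). A second use of Jensen gives $\int_Y W(y,F+\nabla\psi)\dd y=\int_{\Ysoft}|(F+\nabla\psi)m|^2\dd y-\lambda\ge\lambda|Nm|^2-\lambda=W_{\rm hom}(F)$, and the infimum over $\psi$ yields the claim. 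Combined with the previous step, $W_{\rm cell}=W_{\rm hom}$.

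\emph{The bound $W_{\rm hom}\le W_\#$, and the main obstacle.} Here I would invoke the $\Gamma$-convergence of Theorem~\ref{theo:Gammalimit}. Fix $k\in\N$ and $\psi_k\in W^{1,2}_\#(Y;\R^2)$, extended $Y$-periodically and normalized to zero mean on $Y$; on $\Omega=(0,1)^2$ set $u_n(x)=Fx+\tfrac1n\psi_k(nx)$, corrected by a constant to have vanishing mean. Then $\nabla u_n(x)=F+\nabla\psi_k(nx)$, so $u_n$ is admissible for $E_{1/(nk)}$ --- the $\SO(2)$-constraint is active exactly on $\{x:knx\in\Yrig\}=\tfrac{1}{nk}\Yrig$ --- one has $u_n\to Fx$ in $L^2(\Omega;\R^2)$, and, by $\Z^2$-periodicity of $y\mapsto W(ky,F+\nabla\psi_k(y))$, $E_{1/(nk)}(u_n)=\int_Y W(ky,F+\nabla\psi_k)\dd y$ for every $n$. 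The $\liminf$-inequality for $E=\Gamma(L^2)\text{-}\lim_{\eps\to0}E_\eps$ together with the representation~\eqref{representation3} then give $W_{\rm hom}(F)=E(Fx)\le\liminf_{n\to\infty}E_{1/(nk)}(u_n)=\int_Y W(ky,F+\nabla\psi_k)\dd y$; taking the infimum over $k$ and $\psi_k$ yields $W_{\rm hom}\le W_\#$. I expect the main obstacle to be precisely this step: the oscillatory competitor must be built so as to honour the hard $\Mcal_s$/$\SO(2)$-constraints of $E_\eps$ at exactly the layer scale $\eps=1/(nk)$, matching the periodicities of the rigid strips and of $\psi_k$. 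A self-contained alternative not using Theorem~\ref{theo:Gammalimit} is to re-run the compactness and rigidity of Proposition~\ref{prop:rigidity} and the lower bound of Corollary~\ref{cor: estimate lower bound} on $(0,1)^2$ with $\eps_k=1/k$; there, as in the cell-formula computations above, the averaged Jensen estimates go the favourable way precisely because $W_{\rm hom}$ is evaluated at the single averaged matrix $F$.
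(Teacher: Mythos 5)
Your proposal is correct, and while your treatment of the upper bound coincides with the paper's, your key lower bound is obtained by a genuinely different route. For $W_{\rm cell}\le W_{\rm hom}$ you do essentially what the paper does in its Step~2: use Lemma~\ref{lem: specific laminates in N} to pick $N$ with $Ne_1=Re_1$ and $\lambda N+(1-\lambda)R=F$, and glue the convex-integration construction of Corollary~\ref{cor:convexintegration} into $\Ysoft$; your single-cell gluing is indeed $Y$-periodic precisely because $Ne_1=Re_1$ makes the interface traces differ by constants and $\lambda N+(1-\lambda)R=F$ closes the period in $x_2$, which is the same compatibility the paper exploits when it observes that $u_k-Fy$ is $k^{-1}Y$-periodic. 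The divergence is in $W_{\rm hom}\le W_\#$: the paper re-runs the compactness and liminf arguments of Theorem~\ref{theo:Gammalimit} on the cell sequence itself (with $\Omega=Y$), obtains a limit of the form $F+\nabla\psi$ with $\psi$ periodic, and then needs Lemma~\ref{lem:rigid_periodic}\,ii) plus Jensen to pass from $\int_Y W_{\rm hom}(F+\nabla\psi)\,\mathrm{d}y$ to $W_{\rm hom}(F)$; you instead rescale an arbitrary periodic competitor $\psi_k$ into the oscillating sequence $u_n(x)=Fx+\tfrac1n\psi_k(nx)$ on the fixed domain $(0,1)^2$, check that it is admissible for $E_{1/(nk)}$ with constant energy $\int_Y W(ky,F+\nabla\psi_k)\,\mathrm{d}y$, and invoke the already-proved $\Gamma$-liminf inequality for the affine limit $Fx$. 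This uses Theorem~\ref{theo:Gammalimit} as a black box, avoids Lemma~\ref{lem:rigid_periodic}\,ii) entirely, and is the standard ``rescale-and-apply-$\Gamma$-liminf'' closure argument; the paper's route stays on the unit cell and, as a by-product, records the structural information of Lemma~\ref{lem:rigid_periodic}\,ii) (same rotation, average slip equal to $\gamma$), which your argument never needs. Your additional direct Jensen proof of $W_{\rm hom}\le W_{\rm cell}$ (rigidity on $\Yrig$ plus $y_1$-periodicity to identify the rotation, then two applications of Jensen on $\Ysoft$) is logically redundant in your chain $W_{\rm hom}\le W_\#\le W_{\rm cell}\le W_{\rm hom}$, but it is sound and in effect reproves the content of Lemma~\ref{lem:rigid_periodic} in a self-contained way, showing $W_{\rm cell}=W_{\rm hom}$ without any reference to the multicell formula. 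The only points stated without detail -- the periodicity of the glued cell competitor and the tacit restriction to competitors of finite energy in the rescaling step -- are harmless and easily supplied.
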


\begin{proof}
Since trivially $W_\#\leq W_{\rm cell}$, it is enough to prove $W_\#\geq W_{\rm hom}$ and  $W_{\rm hom} \geq W_{\rm cell}$.
From Lemma~\ref{lem:rigid_periodic}\,\textit{i}) we infer that $W_{\#}(F)=W_{\rm cell}(F)=\infty$ for $F\notin \Mcal_{e_1}\cap \Ncal_s\supset \{F\in \R^{2\times 2}: W_{\rm hom}(F) <\infty\}$, cf.~\eqref{char_Me1capNs}.
Therefore, in the following, we can restrict ourselves to $F\in \Mcal_{e_1}\cap \Ncal_s$.

\textit{Step~1: $W_\#(F)\geq W_{\rm hom}(F)$.}
Without loss of generality, assume that $W_\#(F)<\infty$. According to the definition of $W_\#$ in~\eqref{homformula} there exists a sequence $(\psi_k)_k\subset W^{1,2}_\#(Y;\R^2)$ with 
\begin{align}\label{eq60}
W_\#(F) = \liminf_{k\to \infty} \int_{Y} W(ky, F+\nabla \psi_k(y))\dd{y}.
\end{align}

Imitating the proofs of Theorem~\ref{theo:Gammalimit} in Sections~\ref{sec:Gamma_e1} and~\ref{sec:proof_sneqe1} regarding compactness (Steps~1) and the lower bound (Steps~3) with $\Omega=Y$, we obtain that there is an subsequence (not relabeled) of $(\psi_k)_k$ and $u\in W^{1,2}(Y;\R^2)$ with $\nabla u=F+\nabla \psi$ for some $\psi\in W^{1,2}_\#(Y;\R^2)$
such that 
\begin{align*}
F+\nabla \psi_k \weakly \nabla u = F+\nabla \psi \quad \text{in $L^2(Y;\R^{2\times 2})$},
\end{align*}
and
\begin{align}\label{eq61}
W_\#(F)\geq E(u)
= \int_Y W_{\rm hom}(F+\nabla \psi)\dd{y},
\end{align} 
in view of~\eqref{eq60} and~\eqref{representation3}.
By the definition of $W_{\rm hom}$, it follows that $F+\nabla \psi\in \Mcal_{e_1}$ a.e.~in $Y$. Along with the assumption $F\in \Mcal_{e_1}$ (precisely, $F=R(\Ibb+\gamma e_1\otimes e_2)$ with $R\in SO(2)$ and $\gamma\in \R$), Lemma~\ref{lem:rigid_periodic}\,\textit{ii}) in conjunction with Jensen's inequality 
yields 
\begin{align}\label{eq62}
\int_Y W_{\rm hom}(F+\nabla \psi)\dd{y} \geq \frac{1}{\lambda}\absB{Fm + \int_Y\nabla \psi m \dd{y} -(1-\lambda)Rm}^2-\lambda =W_{\rm hom}(F).
\end{align}
Joining~\eqref{eq61} and~\eqref{eq62} finishes the proof of Step~1.

\textit{Step~2: $W_{\rm cell}(F)\leq W_{\rm hom}(F)$.} To show the reverse inequality, we build on the construction of recovery sequences for affine limit functions (Step~2) in the proofs of Theorem~\ref{theo:Gammalimit}, again with $\Omega=Y$, see Sections~\ref{sec:Gamma_e1} and~\ref{sec:proof_sneqe1}. Accordingly, there is a sequence $(u_k)_k\subset W^{1,2}(Y;\R^2)$ such that $\nabla u_k\weakly F$ in $L^2(Y;\R^{2\times 2})$ and 
\begin{align*}
W_{\rm hom}(F) = \lim_{k\to \infty} \int_Y W(ky, \nabla u_k(y))\dd{y}.
\end{align*}

By construction, the functions $\psi_k$ defined by $\psi_k(y)=u_k(y)-Fy$ for $y\in Y$ are $k^{-1}Y$-periodic (see~\eqref{eq23}, and \eqref{eq64}, \eqref{def_v1}; indeed, $v_1-F\frarg$ is $Y$-periodic), so that 
\begin{align*}
W_{\rm hom}(F)= \lim_{k\to \infty} \int_Y W(ky, F+ \nabla \psi_k(y))\dd{y} =   \lim_{k\to \infty}
\int_{Y} W(y, F+\nabla \psi_k(k^{-1}y)) \geq W_{\rm cell}(F),
\end{align*}
as stated.
\end{proof}

\section*{Acknowledgements}
The authors would like to thank Georg Dolzmann for his valuable comments on a preliminary version of the manuscript.
This work was partially supported by the Deutsche Forschungsgemeinschaft through the Forschergruppe 797 ``Analysis and computation of microstructure in finite plasticity'', project DO 633/2-1.


\bibliographystyle{abbrv}
\bibliography{Homogenization}

\begin{thebibliography}{10}

\bibitem{Att84}
H.~Attouch.
\newblock {\em Variational convergence for functions and operators}.
\newblock Applicable Mathematics Series. Pitman (Advanced Publishing Program),
  Boston, MA, 1984.

\bibitem{BoB02}
G.~Bouchitt{\'e} and M.~Bellieud.
\newblock Homogenization of a soft elastic material reinforced by fibers.
\newblock {\em Asymptot. Anal.}, 32(2):153--183, 2002.

\bibitem{Bra02}
A.~Braides.
\newblock {\em {$\Gamma$}-convergence for beginners}, volume~22 of {\em Oxford
  Lecture Series in Mathematics and its Applications}.
\newblock Oxford University Press, Oxford, 2002.

\bibitem{BrD98}
A.~Braides and A.~Defranceschi.
\newblock {\em Homogenization of multiple integrals}, volume~12 of {\em Oxford
  Lecture Series in Mathematics and its Applications}.
\newblock The Clarendon Press, Oxford University Press, New York, 1998.

\bibitem{BrG95}
A.~Braides and A.~Garroni.
\newblock Homogenization of periodic nonlinear media with stiff and soft
  inclusions.
\newblock {\em Math. Models Methods Appl. Sci.}, 5(4):543--564, 1995.

\bibitem{CaDA02}
L.~Carbone and R.~De~Arcangelis.
\newblock {\em Unbounded functionals in the calculus of variations}, volume 125
  of {\em Chapman \& Hall/CRC Monographs and Surveys in Pure and Applied
  Mathematics}.
\newblock Chapman \& Hall/CRC, Boca Raton, FL, 2002.
\newblock Representation, relaxation, and homogenization.

\bibitem{CEYZ04}
G.~Cardone, A.~Corbo~Esposito, G.~A. Yosifian, and V.~V. Zhikov.
\newblock Homogenization of some problems with gradient constraints.
\newblock {\em Asymptot. Anal.}, 38(3-4):201--220, 2004.

\bibitem{CHM02}
C.~Carstensen, K.~Hackl, and A.~Mielke.
\newblock Non-convex potentials and microstructures in finite-strain
  plasticity.
\newblock {\em Proceedings of the Royal Society of London. Series A:
  Mathematical, Physical and Enineering Science}, 458(2018):299--317, 2002.

\bibitem{ChC12}
M.~Cherdantsev and K.~D. Cherednichenko.
\newblock Two-scale {$\Gamma$}-convergence of integral functionals and its
  application to homogenisation of nonlinear high-contrast periodic composites.
\newblock {\em Arch. Ration. Mech. Anal.}, 204(2):445--478, 2012.

\bibitem{ChK15PAMM}
F.~Christowiak and C.~Kreisbeck.
\newblock Laminate structures in plastic composite materials with rigid layers.
\newblock {\em Proc. Appl. Math. Mech.}, 15:539--540, 2015.

\bibitem{CDD06}
D.~Cioranescu, A.~Damlamian, and R.~De~Arcangelis.
\newblock Homogenization of integrals with pointwise gradient constraints via
  the periodic unfolding method.
\newblock {\em Ric. Mat.}, 55(1):31--53, 2006.

\bibitem{Con06}
S.~Conti.
\newblock Relaxation of single-slip single-crystal plasticity with linear
  hardening.
\newblock In P.~Gumbsch, editor, {\em Multiscale Materials Modeling}, pages
  30--35. Fraunhofer IRB, Freiburg, 2006.

\bibitem{CDK11}
S.~Conti, G.~Dolzmann, and C.~Kreisbeck.
\newblock Asymptotic behavior of crystal plasticity with one slip system in the
  limit of rigid elasticity.
\newblock {\em SIAM Journal on Mathematical Analysis}, 43(5):2337--2353, 2011.

\bibitem{CoT05}
S.~Conti and F.~Theil.
\newblock Single-slip elastoplastic microstructures.
\newblock {\em Arch. Ration. Mech. Anal.}, 178(1):125--148, 2005.

\bibitem{Dac08}
B.~Dacorogna.
\newblock {\em Direct methods in the calculus of variations}, volume~78 of {\em
  Applied Mathematical Sciences}.
\newblock Springer, New York, second edition, 2008.

\bibitem{Dal93}
G.~Dal~Maso.
\newblock {\em An introduction to gamma-convergence}.
\newblock Number~8 in Progress in nonlinear differential equations and their
  applications. Birkh{\"a}user, Boston, 1993.

\bibitem{DeG75}
E.~De~Giorgi.
\newblock Sulla convergenza di alcune successioni d'integrali del tipo
  dell'area.
\newblock {\em Rend. Mat. (6)}, 8:277--294, 1975.
\newblock Collection of articles dedicated to Mauro Picone on the occasion of
  his ninetieth birthday.

\bibitem{DeF75}
E.~De~Giorgi and T.~Franzoni.
\newblock Su un tipo di convergenza variazionale.
\newblock {\em Atti Accad. Naz. Lincei Rend. Cl. Sci. Fis. Mat. Natur. (8)},
  58(6):842--850, 1975.

\bibitem{FLM05}
I.~Fonseca, G.~Leoni, and J.~Mal{\'y}.
\newblock Weak continuity and lower semicontinuity results for determinants.
\newblock {\em Arch. Ration. Mech. Anal.}, 178(3):411--448, 2005.

\bibitem{FrG14}
G.~Francfort and A.~Giacomini.
\newblock On periodic homogenization in perfect elasto-plasticity.
\newblock {\em J. Eur. Math. Soc. (JEMS)}, 16(3):409--461, 2014.

\bibitem{FrM93}
G.~A. Francfort and J.-J. Marigo.
\newblock Stable damage evolution in a brittle continuous medium.
\newblock {\em European J. Mech. A Solids}, 12(2):149--189, 1993.

\bibitem{FrM98}
G.~A. Francfort and J.-J. Marigo.
\newblock Revisiting brittle fracture as an energy minimization problem.
\newblock {\em J. Mech. Phys. Solids}, 46(8):1319--1342, 1998.

\bibitem{FJM02}
G.~Friesecke, R.~D. James, and S.~M{\"u}ller.
\newblock A theorem on geometric rigidity and the derivation of nonlinear plate
  theory from three-dimensional elasticity.
\newblock {\em Comm. Pure Appl. Math.}, 55(11):1461--1506, 2002.

\bibitem{JKO94}
V.~V. Jikov, S.~M. Kozlov, and O.~A. Ole{\u\i}nik.
\newblock {\em Homogenization of differential operators and integral
  functionals}.
\newblock Springer-Verlag, Berlin, 1994.
\newblock Translated from the Russian by G. A. Yosifian.

\bibitem{Jon98}
R.~M. Jones.
\newblock {\em Mechanics of composite materials}.
\newblock Materials Science and Engineering Series. CRC Press, 2 edition, 1998.

\bibitem{KoL99}
R.~V. Kohn and T.~D. Little.
\newblock Some model problems of polycrystal plasticity with deficient basic
  crystals.
\newblock {\em SIAM J. Appl. Math.}, 59(1):172--197 (electronic), 1999.

\bibitem{Kro60}
E.~Kr{\"o}ner.
\newblock Allgemeine {K}ontinuumstheorie der {V}ersetzungen und
  {E}igenspannungen.
\newblock {\em Arch. Rational Mech. Anal.}, 4:273--334 (1960), 1960.

\bibitem{Lee69}
E.~H. Lee.
\newblock Elastic-plastic deformation at finite strains.
\newblock {\em J. Appl. Mech.}, 36:1--6, 1969.

\bibitem{LuW02}
D.~Lukkassen and P.~Wall.
\newblock On weak convergence of locally periodic functions.
\newblock {\em J. Nonlinear Math. Phys.}, 9(1):42--57, 2002.

\bibitem{mal07}
P.~Mallick.
\newblock {\em Fiber-Reinforced Composites: Materials, Manufacturing and
  Design}.
\newblock Mechanical Engineering. CRC Press, 3 edition, 2007.

\bibitem{Mar78}
P.~Marcellini.
\newblock Periodic solutions and homogenization of nonlinear variational
  problems.
\newblock {\em Ann. Mat. Pura Appl. (4)}, 117:139--152, 1978.

\bibitem{MSB02}
C.~Miehe, J.~Schr{\"o}der, and M.~Becker.
\newblock Computational homogenization analysis in finite elasticity: material
  and structural instabilities on the micro- and macro-scales of periodic
  composites and their interaction.
\newblock {\em Computer Methods in Applied Mechanics and Engineering},
  191(44):4971 -- 5005, 2002.

\bibitem{Mil02}
G.~W. Milton.
\newblock {\em The theory of composites}, volume~6 of {\em Cambridge Monographs
  on Applied and Computational Mathematics}.
\newblock Cambridge University Press, Cambridge, 2002.

\bibitem{Mul87}
S.~M{\"u}ller.
\newblock Homogenization of nonconvex integral functionals and cellular elastic
  materials.
\newblock {\em Archive for Rational Mechanics and Analysis}, 99(3):189--212,
  1987.

\bibitem{MuS99}
S.~M{\"u}ller and V.~{\v{S}}ver{\'a}k.
\newblock Convex integration with constraints and applications to phase
  transitions and partial differential equations.
\newblock {\em J. Eur. Math. Soc. (JEMS)}, 1(4):393--422, 1999.

\bibitem{OrR99}
M.~Ortiz and E.~A. Repetto.
\newblock Nonconvex energy minimization and dislocation structures in ductile
  single crystals.
\newblock {\em Journal of the Mechanics and Physics of Solids}, 47(2):397--462,
  1999.

\bibitem{Raa04}
D.~Raabe.
\newblock {\em Computational Materials Science: The Simulation of Materials,
  Microstructures and Properties}.
\newblock Wiley-VCH Verlag, 2004.

\bibitem{ReC14}
C.~Reina and S.~Conti.
\newblock Kinematic description of crystal plasticity in the finite kinematic
  framework: a micromechanical understanding of {$\bold{F}=\bold{F}\sp
  e\bold{F}\sp p$}.
\newblock {\em J. Mech. Phys. Solids}, 67:40--61, 2014.

\bibitem{RSC16}
C.~Reina, A.~Schl{\"o}merkemper, and S.~Conti.
\newblock Derivation of {$\bold{F}\sp e\bold{F}\sp p$} as the continuum limit
  of crystalline slip.
\newblock {\em Journal of the Mechanics and Physics of Solids}, 2016.

\bibitem{Res67}
Y.~G. Reshetnyak.
\newblock Liouville's conformal mapping theorem under minimal regularity
  hypotheses.
\newblock {\em Siberian Mathematical Journal}, 8:835--840, 1967.

\bibitem{ScH13}
J.~Schr{\"o}der and K.~Hackl.
\newblock {\em Plasticity and beyond: Microstructures, crystal-plasticity and
  phase transitions}, volume 550 of {\em CIMS International Centre for
  Mechanical Sciences}.
\newblock Springer Wien, 2013.

\bibitem{VaM13}
V.~V. Vasiliev and E.~V. Morzov.
\newblock {\em Advanced Mechanics of Composite Materials and Structural
  Elements}.
\newblock Mechanical Engineering. Elsevier Ltd., 3 edition, 2013.

\end{thebibliography}
\end{document}